\newtheorem{theorem}{Theorem}[section]
\newtheorem{prop}[theorem]{Proposition}
\newtheorem{lemma}[theorem]{Lemma}
\newtheorem{cor}[theorem]{Corollary}
\newtheorem{assume}[theorem]{Assumption}
\theoremstyle{definition}
\newtheorem{definition}[theorem]{Definition}
\theoremstyle{remark}
\newtheorem{remark}[theorem]{Remark}
\numberwithin{equation}{section}
\begin{document}

\title{Compactness of harmonic maps of surfaces with regular nodes}

\author{Woongbae Park}
\address{Department of Mathematics, Michigan State University, East Lansing, Michigan, 48824}
\email{parkwoo6@msu.edu}

\begin{abstract}
In this paper, we formulate and prove a general compactness theorem for harmonic maps using Deligne-Mumford moduli space and families of curves.
The main theorem shows that given a sequence of harmonic maps over a sequence of complex curves, there is a family of curves and a subsequence such that both the domains and the maps converge off the set of ``non-regular'' nodes.
This provides a sufficient condition for a neck having zero energy and zero length.
As a corollary, the following known fact can be proved: If all domains are diffeomorphic to $S^2$, both energy identity and zero distance bubbling hold.
\end{abstract}

\maketitle

%%%%%%%%%%%%%%%%%%%%%%%%%%%%%%%%%%%%%%%%%%%%%%%%%%%%%%%%%%%%
%%%%%%%%%%%%%%%%%  Section 1  %%%%%%%%%%%%%%%%%%%%%%%%%%%%%%
%%%%%%%%%%%%%%%%%%%%%%%%%%%%%%%%%%%%%%%%%%%%%%%%%%%%%%%%%%%%%%
\section{Introduction}
\label{section1}

Given a sequence of harmonic maps with uniformly bounded energy, Uhlenbeck Compactness Theorem discovered ``bubble phenomenon'' where bubble occurs at points of energy concentration.
Later, Parker \cite{P} showed bubble tree extension to describe how a given sequence of harmonic maps converges over bubbles.
For fixed domain, this result is regarded as a full answer of the problem because
\begin{enumerate}
\item
it shows energy identity and zero distance bubbling, and
\item
it specifies where each bubble locates, including bubbles on the bubbles.
\end{enumerate}

However, when domain is varying, we do not have above properties.
The difficulty is when complex structure (or metric) degenerates and there were several studies in this direction.
For example, Chen-Tian \cite{CT} focused on energy minimizing harmonic maps and showed compactness result, together with connecting geodesics.
Zhu \cite{Z} studied the conditions that energy identity and zero distance bubbling do not hold and Chen-Li-Wang \cite{CLW} showed length formula of the neck, but their conditions contain non-geometric quantity.
There are other ways to consider change of complex structure, for example, looking at Teichm{\"u}ller space.
This space restricts to the constant curvature metric and hence seems less appropriate to describe bubbles and necks than Deligne-Mumford moduli space.

In this paper we first define convergence of maps (Definition \ref{conv fam}) in terms of family of complex curves using Deligne-Mumford moduli space.
For each energy concentration points, we put additional marked points to build bigger family in which convergence becomes better in terms of {\it residual energy} (Definition \ref{RE}).
Details of this procedure will be explained in Section \ref{section4}.
Note that energy concentration may occur at regular point or nodal point, where the latter corresponds to a degeneration of complex structure.
Since not always energy identity holds in the case of degenerating complex structure, we need more refined notion of {\it regular node} (Definition \ref{reg node def}).
Now our main theorem can be stated in terms of regular node.

\begin{theorem}\label{main theorem}
Suppose $f_{k} : C_{k} \rightarrow X$ be a sequence of harmonic maps with uniformly bounded energy defined on smooth $(g,n)$ curves.

Then there is a subsequence $n_{k}$ and a way of marking points $P_{k}$ on $C_{k}$ such that corresponding sequence $f_{n_{k}} : C'_{n_{k}} = (C_{n_{k}},P_{n_{k}}) \rightarrow X$ converges to some $f_{0} : C_{0} \rightarrow X$ off the singular set $S$ (possibly empty) in $C^{1}$ where all points in $S$ are non-regular nodal points.
Furthermore, $f_{0}$ is harmonic on closure of each component of $C_{0} \setminus S$ separately.
\end{theorem}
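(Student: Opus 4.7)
The plan is to combine compactness in the Deligne-Mumford space for the domains with an iterative bubble-adding construction driven by $\epsilon$-regularity for harmonic maps.

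First, since each $C_k$ is a smooth $(g,n)$-curve, the moduli points $[C_k] \in \mathcal{M}_{g,n}$ admit a convergent subsequence in $\overline{\mathcal{M}}_{g,n}$, yielding a limit (possibly nodal) curve $C_0$ together with a family realizing $C_{n_k} \to C_0$ in the sense of Definition \ref{conv fam}. After passing to this subsequence, I would invoke the standard $\epsilon$-regularity for harmonic maps: there exists $\epsilon_0 > 0$ such that any harmonic map from a ball with energy below $\epsilon_0$ enjoys $C^\infty$ interior estimates. Combined with the uniform energy bound $E(f_k) \leq E_0$, at most $\lfloor E_0/\epsilon_0 \rfloor$ points can be centers of energy concentration; let $\Sigma_0$ denote their union with the nodes of $C_0$. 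Away from $\Sigma_0$, Arzel\`a--Ascoli together with elliptic bootstrapping yields $C^1$ convergence of a further subsequence to a map $f_0$, harmonic on each smooth piece of $C_0 \setminus \Sigma_0$.

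Next, I would refine the family at each point of $\Sigma_0$ that is either a regular point of $C_0$ or a regular node. For each such point, insert an additional marked point $p_k^{(i)} \in C_k$ near the relevant energy concentration, so that $[C_k]$ upgrades to a point of $\mathcal{M}_{g,n+N}$. Under the resulting family convergence, new irreducible components (bubbles, or bridges connecting bubbles) appear at the blown-up locations, and on each such component the residual energy of Definition \ref{RE} strictly decreases by at least $\epsilon_0$. One now re-applies the compactification and the $\epsilon$-regularity argument to this enlarged family. Because the total energy is finite, the iteration halts after finitely many rounds, leaving a limit $f_0 : C_0 \to X$ whose residual singular set $S$ consists only of those nodes that cannot be resolved by marking further points, which are exactly the non-regular nodes in the sense of Definition \ref{reg node def}. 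Harmonicity of $f_0$ on the closure of each component of $C_0 \setminus S$ then follows by passing the harmonic map equation to the $C^1$ limit on each piece separately.

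The main obstacle is making this refinement step rigorous. One must verify that introducing marked points near a point of energy concentration not only produces convergence of domains in $\overline{\mathcal{M}}_{g,n+N}$ but a genuine convergence of families in the sense of Definition \ref{conv fam}, and that at each iteration some residual energy is actually absorbed into a new component rather than being lost in a collapsing neck. Controlling this delicate dichotomy, by pairing the energy quantization from $\epsilon$-regularity with the precise geometric meaning of Definition \ref{reg node def}, is exactly what separates the two behaviors and is the technical heart of the argument.
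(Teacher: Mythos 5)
Your outline follows the same overall route as the paper (Deligne--Mumford compactness for the domains, Uhlenbeck/$\varepsilon$-regularity away from nodes, then an induction on the residual energy driven by adding marked points and forgetful maps), but the step you yourself flag as ``the main obstacle'' is precisely the content of the paper's proof, and as written it is a genuine gap. Your assertion that after inserting marked points ``the residual energy strictly decreases by at least $\epsilon_0$'' is exactly what must be proved, and it fails for an arbitrary choice of marked points: the concentrated energy can simply re-form at a single point of the new component $E$ (at the new node, or at one smooth bubble point of $E$ with the limit constant on $E$), in which case $RE' = RE$ and the induction never terminates. The paper excludes this via Lemmas \ref{mark} and \ref{mark2}: the marked points are chosen by a cross-ratio renormalization so that exactly $\bar{\varepsilon}$ of the concentrating energy lies outside the unit disk of $E$ \eqref{mark-eq1} and (for a smooth bubble point) the center of mass over the disk sits at the origin \eqref{mark-eq2}; these two normalizations, together with Lemma \ref{gap neck} applied at the new node to force $m_{\infty}=0$, are what rule out the degenerate re-concentration and yield $RE' \leq RE - \bar{\varepsilon}/2$. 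Note also a concrete defect in your refinement step: near a smooth concentration point you must add \emph{two} marked points, not one --- with a single marked point the would-be component $E$ has only two special points, is unstable, and is collapsed by the stabilization, so no new component appears at all.

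A second omission concerns the convergence statement itself. Definition \ref{conv fam} requires the zero neck property at every node outside $S$, so to conclude that $S$ consists only of non-regular nodes you must show that at regular nodes carrying no quantum of energy the necks have vanishing energy and diameter; $C^1$ convergence on compact sets away from the nodes does not give this. This is where Section \ref{section5} enters: the forgetful map plus the Pohozaev identity give $\alpha = 0$ at a regular node (Lemma \ref{reg node}), and then Proposition \ref{gap2} and Lemma \ref{gap neck} yield the zero neck property. The same ingredient is needed inside the induction (to kill $m_{\infty}$ at the newly created nodes, which are regular by Lemma \ref{new node regular}), so it cannot be bypassed. Your proposal mentions the ``precise geometric meaning'' of regularity only as a heuristic; supplying these two mechanisms --- the normalized placement of marked points and the $\alpha=0$ neck estimate --- is what turns your outline into the paper's proof.
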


With additional assumption, we can make the singular set $S$ empty.

\begin{cor} \label{main2}
With the same assumption of Theorem \ref{main theorem}, also assume that $C_{k} \rightarrow C_{0}$ in a family $\mathcal{C}$ and all nodes in $C_{0}$ are regular.

Then the singular set $S$ in the convergence in Theorem \ref{main theorem} is empty.
Furthermore, the energy identity holds and the image of $f_{0}$ is connected.
\end{cor}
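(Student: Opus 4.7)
My plan is to apply Theorem \ref{main theorem} to extract a subsequence, a marking $P_{k}$, and convergence $f_{n_{k}} \to f_{0}$ off the singular set $S$ of non-regular nodal points, and then argue that under the extra hypotheses every node of the limit curve is in fact regular, forcing $S = \emptyset$ and delivering the remaining two conclusions as consequences of what ``regular node'' means.

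First I would classify the nodes of the (possibly refined) limit curve into two groups. The first group consists of the nodes already present in $C_{0}$ before the marking procedure; these are regular by hypothesis. The second group consists of new nodes created by the marking in Section \ref{section4}, each one attached at an interior energy-concentration point of a smooth component of $C_{0}$. For a node of the second group, the local family-picture is exactly the standard bubble degeneration at an interior smooth point: after conformal rescaling at the concentration point, the maps converge to a harmonic sphere joined to the base map, and Parker's bubble-tree analysis \cite{P} gives zero neck energy and zero neck length. Matching this picture against Definition \ref{reg node def} shows such new nodes are regular as well, so every node of the limit curve is regular.

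It then follows immediately from Theorem \ref{main theorem} that $S = \emptyset$, and the convergence $f_{n_{k}} \to f_{0}$ in $C^{1}$ extends to the complement of the node set. The energy identity then comes from adding contributions: on each irreducible component the energies pass to the limit by the $C^{1}$ convergence of the restricted maps and the residual-energy bookkeeping from Definition \ref{RE}, while each node contributes zero neck energy by regularity. For connectedness of the image, zero neck length at every regular node means the two branches meeting at a node are sent to the same point in $X$, so the image of $f_{0}$ is a finite union of connected sets glued along common points, hence connected.

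The step I expect to be the main obstacle is the identification in the second paragraph: one must verify that the new nodes produced by the family-theoretic marking construction of Section \ref{section4} really meet the formal definition of a regular node, and not just the informal bubble-tree picture. Once this reduction from family-convergence data to Parker's no-neck conclusions is made cleanly, the remaining steps are essentially bookkeeping within the framework already set up by Definitions \ref{conv fam}, \ref{RE} and \ref{reg node def}.
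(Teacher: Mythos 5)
Your overall strategy matches the paper's: show every node of the refined limit curve $C'_{0}$ is regular, invoke Theorem \ref{main theorem} to get $S=\emptyset$, and then read off the energy identity and connectedness (the paper does this via the lemma ``Energy Identity and Connected Image'' in Section \ref{section3}, which is exactly your last paragraph). But the step you yourself flag as the main obstacle is a genuine gap, and the route you sketch for it does not work. Regularity of a node (Definition \ref{reg node def}) is a purely family-theoretic condition: the existence of a forgetful map to a family with fewer marked points sending the node to a \emph{regular point}. It is not characterized by ``zero neck energy and zero neck length''; in this paper the logical direction is the opposite — regularity is the hypothesis used (via the Pohozaev argument in Lemma \ref{reg node} and Lemma \ref{gap neck}) to \emph{derive} the zero neck property. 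So appealing to Parker's bubble-tree analysis to ``match the picture against Definition \ref{reg node def}'' does not verify the definition, and your proof of the key step is missing.

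The correct verification is short and you already have the needed tools. For a node $p'$ of $C'_{0}$ created by the marking construction, the forgetful map $\Phi:\mathcal{C}'\to\mathcal{C}$ from Lemma \ref{new family} sends $p'$ to a \emph{smooth} point of $C_{0}$ (the bubble point at which the collapsed rational curve $E$ is attached), so Definition \ref{reg node def} is satisfied literally with $\overline{\mathcal{C}}=\mathcal{C}$; indeed Lemmas \ref{mark} and \ref{mark2} already record that the new nodes are regular. For a node $p'$ of $C'_{0}$ lying over a node $p$ of $C_{0}$, you cannot simply say it is ``regular by hypothesis'': the hypothesis concerns regularity of $p$ relative to $\mathcal{C}$, whereas you need regularity of $p'$ relative to the enlarged family $\mathcal{C}'$. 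This is exactly Lemma \ref{new node regular}: compose the forgetful map $\overline{\Phi}:\mathcal{C}\to\overline{\mathcal{C}}$ witnessing regularity of $p$ with $\Phi:\mathcal{C}'\to\mathcal{C}$ to obtain a forgetful map sending $p'$ to a regular point. With these two cases in place, all nodes of $C'_{0}$ are regular, the singular set of Theorem \ref{main theorem} is empty, and the remaining conclusions follow as you say.
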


Now Parker's theorem is a corollary of the main theorem.

\begin{cor}\label{fixed domain}
Let $\Sigma$ be a smooth Riemann surface with genus $g$ and suppose $f_{k} : \Sigma \rightarrow X$ be a sequence of harmonic maps with uniformly bounded energy.

Then Corollary \ref{main2} can be applied with $C_k = C_0 = \Sigma$.
\end{cor}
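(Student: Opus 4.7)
The plan is to verify the hypotheses of Corollary \ref{main2} in the most direct way possible and then invoke it. Since $\Sigma$ is a fixed smooth Riemann surface of genus $g$, I would set $C_k = \Sigma$ for every $k$ and form the constant family $\mathcal{C}$ whose fibers are all isomorphic to $\Sigma$. In this trivial family, convergence $C_k \to C_0 = \Sigma$ is immediate: every fiber is literally the same curve. Moreover, because $\Sigma$ is smooth, $C_0 = \Sigma$ has no nodes at all, so the hypothesis ``all nodes in $C_0$ are regular'' is satisfied vacuously. Together with the uniform energy bound, these observations furnish every requirement needed to apply Corollary \ref{main2}.

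Once Corollary \ref{main2} is invoked, three conclusions drop out without further work: the singular set $S$ produced by Theorem \ref{main theorem} is empty, the energy identity holds, and the image of the limit map $f_0$ is connected. Since Theorem \ref{main theorem} gives convergence of the subsequence $f_{n_k}$ on the marked curves $C'_{n_k} = (\Sigma, P_{n_k})$ off of $S$, the emptiness of $S$ promotes this to genuine bubble-tree convergence with no loss of energy or distance; this is precisely the content of Parker's theorem in the fixed-domain case.

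The only subtle point I anticipate is conceptual rather than technical. The marking procedure of Theorem \ref{main theorem} will in general insert points at energy concentration loci, and in the Deligne--Mumford limit this typically pinches off bubble spheres attached to $\Sigma$, so the refined limit curve produced by the theorem is genuinely nodal even though the underlying ambient family is constant. The main point is that these nodes arise from bubbling at \emph{regular} points of the fixed smooth surface, not from any degeneration of complex structure; under Definition \ref{reg node def} they must therefore be regular nodes, which is exactly what the conclusion $S = \emptyset$ of Corollary \ref{main2} encodes. Apart from checking that the formalism of ``family'' in the statement of Corollary \ref{main2} admits the constant family as a legitimate instance, there is nothing further to verify: the substantive work has already been done in establishing Theorem \ref{main theorem} and Corollary \ref{main2}, and the present corollary is essentially a bookkeeping reduction.
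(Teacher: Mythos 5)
Your proposal is correct and follows essentially the same route as the paper: take $C_k = C_0 = \Sigma$ (constant family), observe that the smooth limit curve has no nodes so the regularity hypothesis is vacuous, and invoke Corollary \ref{main2}. The extra discussion about bubble nodes being regular is sound but is already handled inside the proof of Corollary \ref{main2} via the forgetful map and Lemma \ref{new node regular}, so no additional verification is needed.
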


Another corollary is when $g=0$, that is, all domains are $n$-marked sphere $S^2$.

\begin{cor}\label{sphere}
Let $C_{k}$ be 2-spheres with $n$-marked points and suppose $f_{k} : C_{k} \rightarrow X$ be a sequence of harmonic maps with uniformly bounded energy.

Then Corollary \ref{main2} can be applied.
\end{cor}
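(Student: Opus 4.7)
The plan is to verify the two extra hypotheses that distinguish Corollary \ref{main2} from Theorem \ref{main theorem}: family convergence $C_{k}\to C_{0}$ in some $\mathcal{C}$, and regularity of every node of the limit $C_{0}$.

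For the family, I would first apply Theorem \ref{main theorem} to pass to a subsequence equipped with a marking $P_{k}$. If the total number of marked points on each $C_{k}$ is still less than three, add auxiliary marked points chosen to converge, so that the curves become stable $(0,N)$-curves for some $N\geq 3$. The points $[C_{k}]\in\mathcal{M}_{0,N}$ then lie in the compact Deligne--Mumford space $\overline{\mathcal{M}}_{0,N}$, so a further subsequence converges to a stable limit $[C_{0}]$; pulling back the universal curve over a neighborhood of $[C_{0}]$ provides a family $\mathcal{C}$ in which $C_{k}\to C_{0}$ in the sense of Definition \ref{conv fam}.

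For regularity of the nodes, I would use that in genus $0$ every stable nodal curve is a tree of $\mathbb{P}^{1}$'s, so every node is separating between two rational components. On a chart around such a node, the curves $C_{k}$ look like long conformal cylinders $[-T_{k},T_{k}]\times S^{1}$ with $T_{k}\to\infty$, each end capped off by a disc of the adjacent $\mathbb{P}^{1}$ component. The restricted harmonic maps $f_{k}$ therefore fall into the precise setup of the classical bubble--tree analysis of Sacks--Uhlenbeck and Parker \cite{P}: bounded energy together with the fact that both cylinder ends close up to discs on genuinely rational components forces the conformal energy on the cylinder and the diameter of its image in $X$ to tend to zero. By the residual-energy criterion of Definition \ref{reg node def}, each such node is regular.

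Having verified both hypotheses, Corollary \ref{main2} applies and yields $S=\emptyset$, the energy identity, and connectedness of $f_{0}(C_{0})$. The main obstacle is the second step: node regularity is not a formal consequence of the genus-$0$ hypothesis but a genuine analytic input from classical bubble--tree theory, ultimately resting on three-annulus and Pohozaev-type estimates that are known to fail once a non-separating loop degenerates, as happens only in higher genus. Depending on exactly how Definition \ref{reg node def} is phrased in Section \ref{section4}, the remaining effort is the routine translation of Parker's conclusion \cite{P} into the paper's residual-energy language.
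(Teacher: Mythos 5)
Your first step (passing to a subsequence, stabilizing with extra marked points if needed, and extracting a limit $C_{0}$ via compactness of $\overline{\mathcal{M}}_{0,N}$) matches the paper's setup. The gap is in the second step: you never verify the hypothesis of Corollary \ref{main2} as it is actually defined. In this paper a node $p$ is \emph{regular} (Definition \ref{reg node def}) when there exists a forgetful map $\Phi:\mathcal{C}\rightarrow\overline{\mathcal{C}}$ to a family with fewer marked points under which $\Phi(p)$ is a regular point; this is a structural condition on the family, not a ``residual-energy criterion'' --- residual energy (Definition \ref{RE}) is a different notion that only enters the induction of Section \ref{section7}. What you offer instead is an analytic assertion: that a long cylinder whose two ends are capped by rational components forces the neck energy and image diameter to zero ``by classical bubble-tree theory.'' That is not an off-the-shelf consequence of Sacks--Uhlenbeck/Parker, whose theorems concern a fixed domain with fixed conformal structure; in the paper it is exactly the content of Lemma \ref{reg node} (the Pohozaev identity giving $\alpha=0$) combined with Proposition \ref{gap2} and Lemma \ref{gap neck}, and those results take regularity of the node as their input. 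So your argument either presupposes what must be proved or silently re-derives Sections \ref{section4}--\ref{section5} without supplying the key fact $\alpha=0$.

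Your closing remark, that node regularity ``is not a formal consequence of the genus-$0$ hypothesis but a genuine analytic input,'' inverts the paper's logic. The paper's proof is precisely the formal argument: if $n=3$, $\overline{\mathcal{M}}_{0,3}$ is trivial and $C_{0}$ has no nodes at all; if $n\geq 4$, compose with the forgetful map that forgets $n-3$ marked points, landing in the (trivial) family of $(0,3)$ curves, whose fibers are smooth, so every node of $C_{0}$ maps to a regular point and is regular by Definition \ref{reg node def}. The analytic conclusions (zero neck property, energy identity, connected image) then follow \emph{from} regularity via Lemmas \ref{reg node} and \ref{gap neck}, not the other way around.
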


Proofs will be given in Section \ref{section7}.

The remaining parts of this paper is organized as follows.
Section \ref{section2} deals with basic properties of harmonic maps and Deligne-Mumford moduli space.
In Section \ref{section3}, we develop necessary convergence terminology.
Section \ref{section4} focuses on neck analysis and Section \ref{section5} introduces regular nodes.
In Section \ref{section6} we explain the procedure of building bigger family by putting appropriate marked points.
Finally, the last section contains proof of the main theorem.

The author thanks to his advisor, Thomas H. Parker, for valuable advices, comments and inspirations about this paper.

%%%%%%%%%%%%%%%%%%%%%%%%%%%%%%%%%%%%%%%%%%%%%%%%%%%%%%%%%%%%
%%%%%%%%%%%%%%%%%  Section 2  %%%%%%%%%%%%%%%%%%%%%%%%%%%%%%
%%%%%%%%%%%%%%%%%%%%%%%%%%%%%%%%%%%%%%%%%%%%%%%%%%%%%%%%%%%%%%
\section{Background} 
\label{section2}

%%%%%%%%%%%%%%%%%  Subsection 2-1  %%%%%%%%%%%%%%%%%%%%%%%%%%%%%%
\subsection{Harmonic Maps}

Let $(\Sigma, g)$ and $(X,h)$ be compact Riemannian manifolds with Riemannian metrics $g$ and $h$ with $\dim(\Sigma) = 2$.
We use the same letter $g$ to denote Riemannian metric of $\Sigma$ and genus of $\Sigma$ if there is no confusion.
A map $f : (\Sigma,g) \rightarrow (X,h)$ is harmonic if it is a critical point of the energy functional
\begin{equation}\label{energy func}
E(f) = \mathcal{L}(f)= \frac{1}{2}\int_{\Sigma}\lvert df \rvert ^{2} dvol_{g}.
\end{equation}
Using $f_{k}$ and $g$, we can define corresponding energy density measures $e(f_{k})$ on $\Sigma$ by
\begin{equation}\label{e(f)}
e(f_{k}) = \frac{1}{2}\lvert df_{k}\rvert^{2} dvol_{g}.
\end{equation}

We summarize some important lemmas regarding harmonic maps.
Here we follow \cite{SU} and \cite{P}.
For more results, see \cite{EL} or \cite{EL2}.

\begin{theorem}
Suppose $f: (\Sigma,g) \rightarrow (X,h)$ be harmonic.
Then we have the followings:
\begin{enumerate}
\item
($\varepsilon$-regularity)
There is a constant $\varepsilon_{0}>0$ depending only on second fundamental form of the embedding $X \hookrightarrow \mathbb{R}^{N}$ such that if $f$ is a harmonic map on a disk $D$ and if $E_{D}(f) = \frac{1}{2}\int_{D}\lvert df \rvert ^{2} dvol_{g} < \varepsilon_{0}$, then for any $D' \subset \subset D$,
\begin{equation}
\lVert df \rVert_{W^{1,p}(D')} \leq C \lVert df \rVert_{L^{2}(D)},
\end{equation}
where $1<p<\infty$ and $C$ is a constant which only depends on $p$, $D'$ and the geometry of $X$.
\item
(Energy-gap)
There is a constant $\varepsilon'_{0}>0$ depending only on $(X,h)$ such that if $f$ is a smooth harmonic map on a compact domain $\Sigma$ satisfying $E(f) = \frac{1}{2}\int_{\Sigma}\lvert df\rvert ^{2} dvol_{g} < \varepsilon'_{0}$, then $f$ is constant.
\item
(Removable Singularity)
If $f : D \setminus \{0\} \rightarrow X$ is a $C^{1}$ harmonic map with $E(f) < \varepsilon_{0}$ on a punctured disk $D \setminus \{0\}$, then $f$ can be extended to $D$ in $C^{1}$.
\item
($C^{1}$-convergence)
There is a constant $\varepsilon_{0}>0$ such that if $\{f_{k}\}$ is a family of harmonic maps on $D$ and satisfying $E_{D}(f_{k}) < \varepsilon_{0}$ for all $k$, then there is a subsequence $f_{k}$ that converges to $f$ in $C^{1}$.
\end{enumerate}
\end{theorem}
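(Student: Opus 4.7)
The plan is to establish item (1), the $\varepsilon$-regularity estimate, first, and then derive (2), (3), and (4) as consequences. For (1), write the harmonic map equation via a Nash embedding $X \hookrightarrow \mathbb{R}^N$ in the form $|\Delta f| \leq C|df|^2$, an elliptic system with quadratic gradient nonlinearity. The key observation is that on a two-dimensional disk the Sobolev embedding $W^{1,2} \hookrightarrow L^q$ for every $q<\infty$, combined with the smallness of $\|df\|_{L^2(D)}$, allows the nonlinearity to be absorbed. Choose a cutoff $\eta$ supported in $D$ with $\eta \equiv 1$ on $D'$, apply $L^p$-elliptic estimates to $\eta f$, and use H\"older/Sobolev on the quadratic term: schematically $\|\,|df|^2\,\|_{L^p} \lesssim \|df\|_{L^{2p}}^2$ can be absorbed on the left-hand side once $\|df\|_{L^2}$ is small. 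Bootstrapping iteratively yields the desired $W^{1,p}$ bound on $df$ for any $p<\infty$.

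For (4), the $C^1$-convergence, apply (1) uniformly in $k$: the hypothesis $E_D(f_k) < \varepsilon_0$ gives uniform $W^{1,p}(D')$-bounds on $df_k$ for $p>2$, hence uniform $C^{0,\alpha}$-bounds on $df_k$ by Morrey's inequality, and Arzel\`a--Ascoli extracts a $C^1$-convergent subsequence. For (3), the removable singularity result, note that since $E(f)$ is finite, the energy on the annulus $D_{2r} \setminus D_r$ tends to zero as $r \to 0$. Cover a small deleted neighborhood of $0$ by a sequence of conformally equivalent annuli of fixed modulus, apply (1) on each (using conformal invariance of the $2$-dimensional Dirichlet energy), and extract uniform $W^{1,p}$ bounds on $df$ up to the origin. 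With $p>2$ this gives $df \in L^\infty$ near $0$, and standard elliptic regularity applied on disks centered at $0$ then produces the $C^1$-extension.

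For (2), the energy gap, combine (1) with compactness: cover $\Sigma$ by finitely many small disks on each of which the energy is less than $\varepsilon_0$, obtaining a uniform pointwise bound $|df|^2 \leq C \cdot E(f)$. For $\varepsilon_0'$ sufficiently small, the image of $f$ is then contained in a geodesically convex ball $B \subset X$. The square of the distance from a fixed point of $B$ is strictly convex, so its composition with $f$ is subharmonic on $\Sigma$; since $\Sigma$ is closed, this subharmonic function must be constant, which forces $f$ itself to be constant. The main technical obstacle throughout is the careful absorption step in (1)---choosing the correct threshold $\varepsilon_0$ and the correct bootstrap iteration to reach $p>2$ without losing smallness. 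Once that is under control, (2), (3), and (4) follow from standard elliptic, covering, and convexity arguments.
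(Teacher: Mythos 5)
There is a genuine gap in your treatment of (3), the removable singularity statement. Note first that the paper does not prove any of these four facts; it quotes them from Sacks--Uhlenbeck \cite{SU} and Parker \cite{P}, so the comparison is with the standard proofs there. Your plan is to cover a deleted neighborhood of $0$ by annuli of fixed conformal modulus, apply the $\varepsilon$-regularity estimate of (1) on each, and conclude ``uniform $W^{1,p}$ bounds on $df$ up to the origin,'' hence $df\in L^\infty$ near $0$. This step fails as stated: $\varepsilon$-regularity applied to the rescaled map on a fixed annulus gives a bound on the rescaled gradient, and undoing the conformal rescaling $x\mapsto rx$ reintroduces a factor $1/r$. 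What you actually obtain is $\lvert df(x)\rvert\le C\,\epsilon(\lvert x\rvert)/\lvert x\rvert$, where $\epsilon(r)^2$ is the energy on a slightly larger annulus and $\epsilon(r)\to 0$; this is $o(1/\lvert x\rvert)$, which is not boundedness of $df$, and it does not even yield continuity of $f$ at $0$, since the dyadic oscillation bounds $\sum_k \epsilon(2^{-k})$ need not converge merely because $\sum_k \epsilon(2^{-k})^2<\infty$. The missing ingredient is a quantitative energy-decay estimate $E(f,D_r)\le C r^{2\alpha}$ for some $\alpha>0$, obtained in \cite{SU} from a Pohozaev-type identity for the radial versus angular energy on circles (vanishing of the Hopf-differential residue, cf. their Lemma 3.5) together with a differential inequality for $E(f,D_r)$; only with this decay do you get H\"older continuity at $0$ and then, via elliptic regularity, the $C^1$ extension. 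Without it, your argument would ``prove'' removability by finiteness of energy alone, with no use of harmonicity beyond $\varepsilon$-regularity, which is too weak.

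The remaining items are essentially fine in outline, with two caveats worth fixing. In (1), the absorption as you wrote it, $\|\,\lvert df\rvert^2\|_{L^p}\lesssim\|df\|_{L^{2p}}^2$ absorbed by smallness of $\|df\|_{L^2}$, does not work directly for general $p$; the standard route is to do the first step at $p=2$ using the two-dimensional Ladyzhenskaya/Gagliardo--Nirenberg inequality $\|df\|_{L^4}^2\le C\|df\|_{L^2}\|df\|_{W^{1,2}}$ so that the small factor $\|df\|_{L^2}$ appears explicitly, and only then bootstrap to all $p<\infty$. In (2), your covering argument produces a pointwise bound whose constant depends on the chosen cover of $\Sigma$, hence the threshold $\varepsilon_0'$ you obtain depends on $(\Sigma,g)$ as well as on $(X,h)$; as the statement asserts dependence on the target alone, you should either note this (the domain-independent version is the one used for bubbles on $S^2$, where conformal invariance fixes the geometry) or argue via the convexity/maximum-principle step with constants controlled only by the convexity radius of $X$ after normalizing the domain.
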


Using these, Uhlenbeck proved the following compactness theorem.

\begin{theorem}\label{cpt}
(Uhlenbeck Compactness Theorem, \cite[Theorem 4.4]{SU} or \cite[Lemma 1.2]{P})
Suppose $\{f_{k}\}$ be a sequence of harmonic maps with uniformly bounded energy.
Then there are at most finite number of points $\{p_{1}, \ldots, p_{l}\}$, called bubble points, subsequence of $\{f_{k}\}$ and limit map $f_{\infty} : (\Sigma,g) \rightarrow (X,h)$ such that $f_{k} \rightarrow f_{\infty}$ in $C^{1}$ for any compact set away from $\{p_{1}, \ldots, p_{l}\}$, and
\begin{equation}\label{measure conv1}
e(f_{k}) \rightarrow e(f_{\infty}) + \sum_{i=1}^{l} m_{i} \delta_{p_{i}}
\end{equation}
as measures where $m_{i} \geq \varepsilon'_{0}$.
\end{theorem}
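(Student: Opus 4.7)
The plan is to combine weak-$\ast$ compactness of finite Radon measures with the $\varepsilon$-regularity estimate to locate the concentration points, and then run a rescaling argument at each such point to extract a genuine harmonic bubble whose energy gives the lower bound $m_i \geq \varepsilon'_0$.

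First, since $E(f_k) \leq E_0$ is uniformly bounded, the measures $e(f_k)$ form a bounded sequence in the space of finite positive Radon measures on $\Sigma$. Passing to a subsequence, Banach--Alaoglu yields $e(f_k) \rightharpoonup \mu$ weakly-$\ast$ for some finite Radon measure $\mu$. Define the bubble set
\[
S = \{ p \in \Sigma : \mu(\{p\}) \geq \varepsilon_0 \},
\]
which is finite with $|S| \leq E_0/\varepsilon_0$. For any $q \notin S$, choose a small disk $D = D_r(q)$ disjoint from $S$ with $\mu(\overline{D}) < \varepsilon_0$; then $E_D(f_k) < \varepsilon_0$ for all large $k$. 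The $\varepsilon$-regularity estimate provides a uniform $W^{1,p}$ bound on $df_k$ over $D' \subset\subset D$, and bootstrapping via the harmonic map equation yields a uniform $C^{1,\alpha}$ bound. Arzel\`a--Ascoli extracts a $C^1$-convergent subsequence on $D'$, and a diagonal argument over a compact exhaustion of $\Sigma \setminus S$ produces a subsequence converging in $C^1_{\mathrm{loc}}(\Sigma \setminus S)$ to a map $f_\infty$. The harmonic map equation passes to the $C^1$ limit, so $f_\infty$ is harmonic on $\Sigma \setminus S$ with finite energy by lower semicontinuity; the removable singularity statement then extends $f_\infty$ harmonically across $S$.

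From $C^1_{\mathrm{loc}}$-convergence on $\Sigma \setminus S$, the absolutely continuous part of $\mu$ equals $e(f_\infty)$, so $\mu = e(f_\infty) + \sum_{i=1}^l m_i \delta_{p_i}$ for some $m_i \geq 0$. The main obstacle is the lower bound $m_i \geq \varepsilon'_0$, which requires a bubble extraction. Fix $p_i$ and work in an isothermal coordinate disk centered at $p_i$. Using that energy persists in every small disk around $p_i$, I choose centers $z_k \to p_i$ and scales $\lambda_k \to 0$ so that the rescaled maps $\tilde f_k(z) = f_k(z_k + \lambda_k z)$ achieve exactly $\varepsilon_0/2$ of energy on the unit disk while having no strictly larger energy concentration on any unit disk in $\mathbb{R}^2$. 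Applying the first part of the argument to $\tilde f_k$ on compact subsets of $\mathbb{R}^2$, I obtain $C^1_{\mathrm{loc}}$-convergence on $\mathbb{R}^2$ (off a finite, possibly empty, secondary bubble set) to a harmonic map of finite energy, which removable singularity extends to a harmonic map $\varphi : S^2 \to X$. The normalization on $\tilde f_k$ forces $\varphi$ to be nonconstant, so the energy-gap statement gives $E(\varphi) \geq \varepsilon'_0$. Since all the rescaled energy is captured within $m_i$ by construction of $\lambda_k$, one concludes $m_i \geq E(\varphi) \geq \varepsilon'_0$.
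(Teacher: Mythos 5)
This theorem is quoted in the paper from \cite{SU} and \cite{P} without proof, and your argument is essentially the standard Sacks--Uhlenbeck/Parker proof: weak-$\ast$ convergence of the energy measures, $\varepsilon$-regularity plus a diagonal argument giving $C^1$ convergence off the finite concentration set, removable singularity for the limit, and a rescaling at each concentration point producing a nonconstant harmonic sphere whose energy, bounded below by the energy-gap constant, yields $m_i \geq \varepsilon'_0$. The outline is correct; note only that your normalization (no unit disk carrying more than $\varepsilon_0/2$ of energy) already forces the secondary bubble set to be empty, which is exactly what guarantees the rescaled limit retains energy $\varepsilon_0/2$ on the closed unit disk and is therefore nonconstant, so the parenthetical hedge about secondary bubbles is unnecessary.
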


Parker \cite{P} used iterated renormalizations to construct a so-called ``bubble tree" to analyze the energy completely, i.e., all the energy comes from either the limit map or bubbles.

\begin{theorem}\label{bubble tree}
(Bubble Tree Convergence, \cite[Theorem 2.2]{P}).
Under the same assumption of Theorem \ref{cpt}, there is a subsequence $\{f_{n}\}$ and a bubble tower domain $T = \Sigma \cup \bigcup S_{I}$ so that the renormalized maps
\begin{equation*}
\{f_{n,I}\} : T \rightarrow X
\end{equation*}
converges in $W^{1,2} \cap C^{0}$ to a smooth harmonic bubble tree map $\{f_{I}\} : T \rightarrow X$.
Moreover,
\begin{enumerate}
\item
(No energy loss)
$E(f_{n})$ converges to $\sum E(f_{I})$, and
\item
(Zero distance bubbling)
At each bubble point $x_{J}$ (at an level in the tree), the images of the base map $f_{I}$ and the bubble map $f_{J}$ meet at $f_{I}(x_{J}) = f_{J}(p^{-})$.
\end{enumerate}
\end{theorem}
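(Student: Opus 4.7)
The plan is to establish bubble tree convergence by iterating the renormalization embedded in Uhlenbeck's Compactness Theorem \ref{cpt}, using the Energy-gap to bound the depth of the iteration. First I apply Theorem \ref{cpt} to get a $C^1$ limit $f_\infty$ on $\Sigma$ away from bubble points $\{p_1,\dots,p_l\}$ with concentrations $m_i \geq \varepsilon_0'$. At each $p_i$ I choose local conformal coordinates and scales $\lambda_k^i \to 0$ normalized so that a definite fraction of the bubble energy lies inside the unit disk of the rescaled map $\tilde f_k^i(z) = f_k(p_i + \lambda_k^i z)$. Each $\tilde f_k^i$ is harmonic on a disk $D(0,R_k)$ with $R_k \to \infty$, and exhausting by compact subdisks and reapplying Theorem \ref{cpt} yields a subsequential $C^1_{loc}$ limit $f_I$ on $\mathbb{C}$ minus finitely many new bubble points; Removable Singularity then extends $f_I$ to a harmonic map on $S^2$, the first-level bubble.

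The renormalization is iterated at any new bubble points produced on each $S^2$, producing a tree $T = \Sigma \cup \bigcup_I S_I$. Termination is forced by the Energy-gap: each bubble carries at least $\varepsilon_0'$ of energy, so the total number of bubbles is bounded uniformly by $\sup_k E(f_k)/\varepsilon_0'$. The renormalized collection $\{f_{n,I}\}$ converges $C^1_{loc}$ on each component of $T$ away from the bubble points of that component, and this already provides a weak form of the stated convergence.

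The genuine obstacle is promoting this componentwise convergence to $W^{1,2}\cap C^0$ on all of $T$, which is exactly the content of the two numbered conclusions. Both claims reduce to an analysis of the \emph{neck regions}: the conformal annuli $A_k^i$ joining radius $\lambda_k^i R$ to radius $r$ around each $p_i$, with $R\to\infty$ and $r\to 0$ chosen after the scales. By construction the energy on each $A_k^i$ is sub-threshold, so the $\varepsilon$-regularity provides uniform $W^{1,p}$ control. Writing the neck in cylindrical coordinates $(t,\theta) \in [T_k^-,T_k^+]\times S^1$, I would split the energy density into radial and angular parts and use that the Hopf differential $\phi = (|f_t|^2 - |f_\theta|^2 - 2i\langle f_t, f_\theta\rangle) \, dz^2$ is holomorphic for harmonic maps; together with a Pohozaev-type identity on concentric circles, this controls angular energy and forces the renormalized map on the neck to be asymptotically a function of $t$ alone.

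The hard part is upgrading this radial reduction to the conclusion that both the neck energy and the diameter of the neck image tend to zero. I would carry out the standard dyadic decomposition of $[T_k^-,T_k^+]$ into unit-length sub-cylinders and iterate the angular-energy estimate, obtaining geometric decay of the energy away from the endpoints of the neck; summing the decay gives $\int_{A_k^i} e(f_n) \to 0$, which yields the energy identity after telescoping over all necks, and the same estimate bounds the $C^0$-oscillation on $A_k^i$ by $\int_{A_k^i}|df_n|$ in conformal coordinates, producing $f_I(x_J)=f_J(p^-)$. I expect this neck analysis, especially obtaining the decay without assuming any convexity of $(X,h)$, to be the principal technical step, with everything else following from Theorem \ref{cpt} applied recursively and a bookkeeping argument on the tree $T$.
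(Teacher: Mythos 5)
First, note that the paper does not prove this statement at all: Theorem \ref{bubble tree} is quoted as background from Parker \cite[Theorem 2.2]{P}, and the paper's own route to this result is indirect, recovering it later as Corollary \ref{fixed domain} of the main theorem via Deligne--Mumford families, added marked points, regular nodes, and the zero neck property (Proposition \ref{gap2} together with Lemma \ref{reg node}). Your proposal instead follows the classical direct route of Sacks--Uhlenbeck/Parker: rescale at concentration points, extend bubbles by removable singularity, bound the depth of the tree by the energy gap, and then do a neck analysis on the connecting annuli. That is a legitimate and genuinely different strategy from the paper's, and it is essentially the strategy of the cited source; the trade-off is that the paper's machinery isolates exactly when the neck estimates work (regular nodes, where the Pohozaev identity kills the constant $\alpha$), while the direct route must establish those estimates by hand.

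Within your sketch, however, the decisive neck step is both underdeveloped and misstated. The Hopf differential/Pohozaev argument, applied here, does \emph{not} show the neck map is ``asymptotically a function of $t$ alone'' --- that is the degenerating-domain regime ($\alpha>0$, geodesic necks), precisely where the theorem can fail. What you need, and what holds only because each $f_k$ is harmonic on the \emph{entire} fixed disk around $p_i$ (no puncture or node), is that the Pohozaev identity forces $\int_{S^1}\lvert f_t\rvert^2\,d\theta=\int_{S^1}\lvert f_\theta\rvert^2\,d\theta$ on every circle, i.e.\ $\alpha=0$; combined with the convexity $\Theta''\ge\Theta$ this gives exponential decay of \emph{both} radial and angular energy in the middle of the neck. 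Your sketch never makes this use of the fixed domain explicit, so as written it does not distinguish the case where the conclusion is true from the case where it is false. Relatedly, the final oscillation bound ``$C^0$-oscillation $\lesssim\int_{A_k^i}\lvert df_n\rvert$'' does not follow from neck energy tending to zero: on a cylinder of length $\sim 2T_k\to\infty$, Cauchy--Schwarz costs a factor $\sqrt{T_k}$, so smallness of $\int\lvert df\rvert^2$ gives nothing. One needs the quantitative decay, e.g.\ the estimate $\int\sqrt{\Theta}\,dt\le 4\bigl(\sqrt{\Theta(T_1)}+\sqrt{\Theta(T_2)}\bigr)$ (as in Section \ref{section4}, following \cite{Z}) together with $\alpha=0$, which is exactly what Proposition \ref{gap2} packages to control both neck energy and diameter. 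Your ``geometric decay, then sum'' plan can be made to work, but as stated the argument jumps from componentwise $C^1_{loc}$ convergence to the two numbered conclusions without supplying this estimate, and that is the entire content of the theorem beyond Theorem \ref{cpt}.
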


%%%%%%%%%%%%%%%%%  Subsection 2-2  %%%%%%%%%%%%%%%%%%%%%%%%%%%%%%
\subsection{Families of Curves}

In this subsection we define a notion of convergence of complex curves $C_{k}$.
As in algebraic geometry, it is useful and important to consider not just single curves, but instead {\it families} of curves.
There are standard definitions of such families used by algebraic geometers.
Many details can be found at \cite{ACG}.
We will use an equivalent definition that is more in the spirit of differential geometry.
For details, see \cite{RS}.

\begin{definition}
A $(g,n)$ curve is defined to be a connected $n$-marked nodal curve of genus $g$, that is, a complex curve $C$ of genus $g$ with at most nodal singularity together with a sequence $\{x_{1}, \ldots, x_{n}\}$ of distinct points of $C$.
A $(g,n)$ curve is said to be stable if $2g-2+n>0$.
\end{definition}

\begin{definition}\label{nodal family}
(\cite{RS} 4.2,4.7)
An $n$-marked nodal family is a surjective proper holomorphic map $\pi : \mathcal{C} \rightarrow B$ between connected complex manifolds with disjoint submanifolds, together with $\mathcal{N},S_{1}, \ldots, S_{n}$ such that
\begin{enumerate}
\item
$\dim_{\mathbb{C}}(\mathcal{C}) = \dim_{\mathbb{C}}(B) + 1$,
\item
$\pi|_{S_{i}}$ maps $S_{i}$ diffeomorphically onto $B$,
\item
$\mathcal{N} = \{p \in \mathcal{C} : d\pi(p) \textrm{ not surjective}\}$, and
\item
\sloppy Each critical point $p \in \mathcal{N}$ has local holomorphic coordinate chart $(x,y,z_{2},\ldots , z_{n})$, called a nodal chart, such that
\begin{equation}
\pi(x,y,z_{2},\ldots,z_{n}) = (xy,z_{2},\ldots,z_{n})
\end{equation}
is a local holomorphic coordinate chart in a neighborhood of $\pi(p)$.
\end{enumerate}
We call $\mathcal{N}$ the nodal set, and $S_{i}$ the marked sections.
\end{definition}

\sloppy By the holomorphic Implicit Function Theorem, $p \in \mathcal{C} \setminus \mathcal{N}$ has local holomorphic coordinate chart $(x,z_{1},z_{2},\ldots,z_{n})$, called a regular chart, such that $\pi(x,z_{1},z_{2},\ldots,z_{n}) = (z_{1},z_{2},\ldots,z_{n})$ is a local holomorphic coordinate chart in a neighborhood of $\pi(p)$.
Note that $\mathcal{N}$ intersects each fiber $C_{b} := \pi^{-1}(b)$ in a finite set.
For each regular value $b \in B$ of $\pi$ the fiber $C_{b}$ is a compact Riemann surface.
For each critical value $b \in B$ of $\pi$ the fiber $C_{b}$ is a nodal curve.

Together with an isomorphism between a curve $C$ and a fiber, one can consider a deformation of $C$ as follows.

\begin{definition}\label{deform}
(\cite{ACG} 11.2.1, 11.4.2)
Let $C$ be a (marked) complex curve (or a nodal curve).
A deformation of $C$ is a nodal family $\pi : \mathcal{C} \rightarrow (B,b_{0})$ plus a given isomorphism between $C$ and the central fiber $\pi^{-1}(b_{0})$ that maps marked points to marked sections.
\end{definition}

We understand the concept of deformation in terms of a germ.
Hence a restriction of a deformation containing the central fiber is regarded as equivalent as the original one.

Now we describe the Deligne-Mumford moduli space.
The Deligne-Mumford moduli space of stable $n$-marked nodal curves of genus $g$ is defined by
\begin{equation}\label{DM}
\overline{\mathcal{M}}_{g,n} := \{ \textrm{ isomorphism classes } [C] \textrm{ of } (g,n) \textrm{ curves }\}.
\end{equation}

The structure of \eqref{DM} is well-known.
Here we point out properties of \eqref{DM} that will be used in this paper.
\begin{itemize}
\item
Let $\mathcal{M}_{g,n}$ be a moduli space of stable $n$-marked smooth curves of genus $g$.
Then $\overline{\mathcal{M}}_{g,n}$ is its compactification.
\item
$\overline{\mathcal{M}}_{g,n}$ is a complex projective variety, and has orbifold structure.
\item
There is a projection map, called forgetful map, given by
\begin{equation}
\Phi : \overline{\mathcal{M}}_{g,n+1} \rightarrow \overline{\mathcal{M}}_{g,n}
\end{equation}
which forgets the last marked point and collapses unstable component to a point.
\end{itemize}

There are several different ways to construct $\overline{\mathcal{M}}_{g,n}$.
Here we see a neighborhood of $[C_{0}]$ in $\overline{\mathcal{M}}_{g,n}$ as a quotient of Kuranishi family of $C_{0}$ by a finite group $Aut(C_{0})$ of automorphisms of central fiber $C_{0}$.
If $[C_{k}] \rightarrow [C_{0}]$ in $\overline{\mathcal{M}}_{g,n}$ with $Aut(C_{0})$ being not trivial, then the quotient has a singularity at $C_{0}$.
Instead, we use Kuranishi family $\pi : \mathcal{C} \rightarrow (B,0)$ of one of representatives $C_{0}$ of $[C_{0}]$ without quotient.
Then the total space $\mathcal{C}$ is a smooth manifold, but $Aut(C_{0})$ acts as transformation of fibers, which destroys uniqueness of embedding of $C_{k}$ into family.

\begin{definition} \label{conv of curve}
For sequence of $(g,n)$ curves $C_{k}$, we say $C_{k} \rightarrow C_{0}$ in a family $\mathcal{C}$ if $\pi : \mathcal{C} \rightarrow (B,0)$ is a deformation of $C_{0}$ with isomorphism $\varphi : C_{0} \rightarrow \pi^{-1}(0)$ and isomorphisms $\varphi_{k} : C_{k} \rightarrow \pi^{-1}(b_{k})$ with $b_{k} \rightarrow 0$.
Since $\varphi$ and $\varphi_{k}$ are isomorphisms, we identify $C_{k}$ with $\pi^{-1}(b_{k})$ and $C_{0}$ with $\pi^{-1}(0)$.
\end{definition}

\begin{remark}
There is no uniqueness statement for the choice of the sequence $b_{k}$ and the isomorphisms $\varphi$ and $\varphi_{k}$.
However, all of the convergence statements below hold for any choice.
\end{remark}

%%%%%%%%%%%%%%%%%%%%%%%%%%%%%%%%%%%%%%%%%%%%%%%%%%%%%%%%%%%%
%%%%%%%%%%%%%%%%%  Section 3  %%%%%%%%%%%%%%%%%%%%%%%%%%%%%%
%%%%%%%%%%%%%%%%%%%%%%%%%%%%%%%%%%%%%%%%%%%%%%%%%%%%%%%%%%%%%%
\section{Convergence of Maps} 
\label{section3}

Now we care about a sequence of $C^{l}$ (of $W^{l,p}$) maps $f_{k} : C_{k} \rightarrow X$ with $C_{k} \rightarrow C_{0}$ in a family $\pi : \mathcal{C} \rightarrow (B,0)$.
In a regular chart of the family, we can give a local coordinate of $C_{k}$ by $(x,b_{k})$ and $C_{0}$ by $(x,0)$.
Also, assume that $E(f_{k}) \leq E_{0} < +\infty$ for all $k$.

\begin{definition}\label{znp}
Let $C_{k} \rightarrow C_{0}$ in a family $\mathcal{C}$ and $p$ be a node of $C_{0}$.
We say a sequence of maps $f_{k} : C_{k} \rightarrow X$ satisfies the zero neck property at $p$ if the following holds:

For any $\varepsilon>0$, there is $\delta_{1}>0$ such that for any $0<\delta<\delta_{1}$ and for all $k$ sufficiently large,
\begin{align*}
E(f_{k}, B(p,\delta) \cap C_{k}) &\leq \varepsilon\\
\mathrm{diam}(f_{k}( B(p,\delta) \cap C_{k})) &\leq \varepsilon
\end{align*}
where $B(p,\delta)$ is a ball of radius $\delta$ centered at $p$ in the family $\mathcal{C}$.
\end{definition}

\begin{definition} \label{conv fam}
We say $f_{k}$ converges to $f_{0} : C_{0} \rightarrow X$ off the set $S \subset C_{0}$ in $C^{l}$ (or $W^{l,p}$) if
\begin{enumerate}
\item
$C_{k} \rightarrow C_{0}$ in a family $\mathcal{C}$,
\item
for any node $p \notin S$, $f_k$ satisfies the zero neck property at $p$.
\item
in every regular chart away from $S$, the projected map
\begin{equation} \label{tilde f}
\tilde{f}_{k}(x) := f_{k} (x,b_{k})
\end{equation}
converges to $\tilde{f}_{0}(x) := f_{0}(x,0)$ in $C^{l}$ (or $W^{l,p}$).
\end{enumerate}

The set $S$ is called singular set.
\end{definition}

\begin{lemma}
(Energy Identity and Connected Image)
Suppose $f_{k} : C_{k} \rightarrow X$ converges to $f_{0} : C_{0} \rightarrow X$ in $C^{1}$ with empty singular set.
Then 
\begin{equation}
\lim_{k \rightarrow \infty}E(f_{k}) = E(f_{0}).
\end{equation}
Furthermore, if each $C_{k}$ is connected, then the image of $f_{0}$ is connected.
\end{lemma}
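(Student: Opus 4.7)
The plan is to reduce both claims to a neighborhood-by-neighborhood analysis: on regular charts the hypothesized $C^{1}$ convergence does all the work, while at each node of $C_0$ the zero neck property supplies the uniform control missing from condition (3) of Definition \ref{conv fam}.

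For the energy identity, let $N\subset C_{0}$ denote the finite nodal set. Given $\varepsilon>0$, I would first invoke the zero neck property at each $p\in N$ to obtain $\delta>0$ so that $E(f_{k}, B(p,\delta)\cap C_{k})\le\varepsilon$ for all sufficiently large $k$. Setting $K_{\delta} := C_{0}\setminus\bigcup_{p\in N}B(p,\delta)$, one has a compact set disjoint from $N$, which can therefore be covered by finitely many regular charts. On each such chart $\tilde{f}_{k}\to\tilde{f}_{0}$ in $C^{1}$, so $|df_{k}|^{2}\to|df_{0}|^{2}$ uniformly; a partition of unity then gives $E(f_{k}, K_{\delta})\to E(f_{0}, K_{\delta})$ as $k\to\infty$. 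Applied on an exhaustion of $C_{0}\setminus N$, the same step also shows $E(f_{0})\le E_{0}<\infty$, hence $E(f_{0}, C_{0}\setminus K_{\delta})\to 0$ as $\delta\to 0$. The triangle inequality
\[
|E(f_{k})-E(f_{0})|\ \le\ |E(f_{k},K_{\delta})-E(f_{0},K_{\delta})|\ +\ \#N\cdot\varepsilon\ +\ E(f_{0}, C_{0}\setminus K_{\delta})
\]
then yields the identity upon sending $k\to\infty$, then $\delta\to 0$, then $\varepsilon\to 0$.

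For the connected image, the crux is that $f_{0}$ extends continuously across each node with a single common value on the two branches meeting there. Fix $p\in N$, and choose $\delta_{j}\to 0$ together with large $k_{j}$ so that $\mathrm{diam}\, f_{k_{j}}(B(p,\delta_{j})\cap C_{k_{j}})\to 0$. Picking any points in these shrinking balls and passing to a subsequence produces a limit $y_{p}\in X$ (using compactness of $X$), and the diameter bound forces the whole local image to collapse to $y_{p}$. The $C^{1}$ convergence in regular charts around $p$ then shows that $\tilde{f}_{0}$ approaches $y_{p}$ along each branch, so $f_{0}$ is continuous at $p$ with value $y_{p}$. Since every $(g,n)$ curve is connected by definition, $C_{0}$ is a connected topological space, and $f_{0}(C_{0})$ is its continuous image, hence connected.

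The principal technical care lies in this extension of $f_{0}$ to the nodes: Definition \ref{conv fam} specifies $f_{0}$ only through $C^{1}$ convergence in regular charts, so the zero neck property together with the compactness of $X$ must be invoked precisely to identify a single value of $f_{0}$ at each node that matches the limits from both branches. Once this continuity is in hand, both statements reduce to standard compactness and continuous-image arguments.
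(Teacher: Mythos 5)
Your proposal is correct and follows essentially the same route as the paper: the zero neck property controls the energy and image diameter near each node, while $C^{1}$ convergence on finitely many regular charts (where the paper notes the holomorphic projection makes the fiber energy equal to that of $\tilde f_k$) gives convergence of the remaining energy, and the diameter condition yields connectedness. Your treatment of the node values $y_p$ is simply a more detailed spelling-out of the paper's one-line appeal to the diameter part of the zero neck property.
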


\begin{proof}
Pick any regular point $p \in C_{0}$ and consider a regular chart $\pi : U \times B \rightarrow B$ with $p=(0,0)$.
The projection map $\pi_{k} : U \times \{b_{k}\} \rightarrow U$ given by $\pi_{k}(x,b_{k}) = x$ is holomorphic, so the energy of $f_{k}$ over $U \times \{b_{k}\}$ is the same as the energy of $\tilde{f}_{k}$ over $U$.
Since $f_{k}$ converges to $f_{0}$ in $C^{1}$, $\lim_{k \rightarrow \infty}E(f_{k},U \times \{b_{k}\}) = E(f_{0},U)$.
Also, because of the zero neck property, we have
\begin{equation*}
\lim_{\delta \rightarrow 0} \lim_{k \rightarrow \infty} E(f_{k}, B(q,\delta) \cap C_{k}) = 0
\end{equation*}
for any node $q$.
Cover $C_{0}$ by finitely many regular charts $\{U_{i} \times B\}$ and we have
\begin{equation*}
\lim_{k \rightarrow \infty}E(f_{k}) = E(f_{0}).
\end{equation*}
Connectedness comes from the other condition of the zero neck property.
\end{proof}

Using this new definition of convergence, Uhlenbeck Compactness Theorem \ref{cpt} can be rewritten as follows:
Note that in regular chart $\tilde{f}_{k}(x) := f_{k}(x,b_{k})$ as in \eqref{tilde f}.

\begin{lemma}\label{loc conv}
(Local convergence)
Suppose a sequence of smooth $(g,n)$ curves $C_{k}$ converges to $C_{0}$ in a family $\mathcal{C}$ and $N$ be a nodal set in $C_{0}$.
Let $f_{k} : C_{k} \rightarrow X$ be a sequence of harmonic maps with uniformly bounded energy.

Then there is a subsequence $n_{k}$, a finite set $Q = \{p_{1}, \ldots, p_{l}\} \subset C_{0} \setminus N$, and a harmonic map $f_{0} : C_{0} \rightarrow X$ such that $f_{n_{k}}$ converges to $f_{0}$ off the set $S = Q \cup N$ in $C^{1}$.
Also, near $p \in Q \subset C_{0}$, we have
\begin{equation}\label{meas conv}
e(\tilde{f}_{n_{k}}) \rightarrow e(f_{0}) + m_{p}\delta_{p},
\end{equation}
where $\delta_{p}$ is a Dirac-delta measure at $p$ and $m_{p} \geq \varepsilon'_{0}$.
\end{lemma}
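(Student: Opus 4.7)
The plan is to globalize Theorem \ref{cpt} to the family $\mathcal{C}$ by working chart by chart and then diagonalizing. First I would choose a countable locally finite cover $\{U_\alpha \times B_\alpha\}$ of $C_0 \setminus N$ by regular charts of $\pi$. In each such chart the projection $(x,z_1,\ldots,z_n) \mapsto (z_1,\ldots,z_n)$ identifies $\pi^{-1}(b_k) \cap (U_\alpha \times B_\alpha)$ with $U_\alpha$ holomorphically, so the projected map $\tilde{f}_k^\alpha(x) := f_k(x, b_k)$ is a harmonic map on $U_\alpha$ for the conformal structure coming from the chart. Because this conformal class on $U_\alpha$ is the same for every $k$, and since Dirichlet energy on a surface is conformally invariant in the domain, the sequence $\{\tilde{f}_k^\alpha\}$ has energy uniformly bounded by $E_0$.

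Next I would apply Theorem \ref{cpt} chart by chart. In each $U_\alpha$ this yields a subsequence, a finite set of bubble points $Q_\alpha \subset U_\alpha$ each carrying concentrated mass $\geq \varepsilon'_0$, and a harmonic limit $f_0^\alpha$ with $C^1$ convergence off $Q_\alpha$ together with the measure convergence \eqref{measure conv1}. A Cantor diagonal argument over the countable cover produces a single subsequence (still denoted $n_k$) that converges in $C^1$ on every compact subset of $C_0 \setminus (N \cup Q)$, where $Q := \bigcup_\alpha Q_\alpha$. Finiteness of $Q$ follows from the uniform energy bound: each point of $Q$ contributes at least $\varepsilon'_0$, so $|Q| \leq E_0/\varepsilon'_0$. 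Transition maps between overlapping regular charts are holomorphic fiberwise isomorphisms, so in two overlapping charts the projected maps differ only by precomposition with a holomorphic identification, and uniqueness of $C^1$ limits patches the $f_0^\alpha$ into a well-defined harmonic map $f_0 : C_0 \setminus (N \cup Q) \to X$.

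To promote $f_0$ to a map on all of $C_0$, across each $p \in Q$ I choose a punctured disk small enough to avoid the other points of $Q \cup N$. Since the measure convergence \eqref{measure conv1} places the Dirac mass $m_p$ exactly at $p$, the residual energy of $f_0$ alone on a sufficiently small punctured disk is less than $\varepsilon_0$, so the Removable Singularity statement (part (3) of the first theorem) supplies a $C^1$ harmonic extension across $p$. ``Harmonic on $C_0$'' is understood in the standard nodal sense, namely harmonic on each smooth component of the normalization, with removable singularity applied at node preimages whenever the local energy allows. The asserted measure convergence \eqref{meas conv} near each $p \in Q$ is inherited directly from Theorem \ref{cpt} in the chart containing $p$.

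The main obstacle is the patching step: ensuring the diagonally extracted subsequence produces consistent local limits across overlapping regular charts. This depends on holomorphicity of $\pi$ in regular charts together with conformal invariance of the harmonic map equation on surfaces, which together make the transition a holomorphic change of fiber coordinate under which harmonic limits are preserved and unique. The remaining ingredients---counting bubbles via the $\varepsilon'_0$ lower bound, running the diagonal extraction on a countable cover, and invoking removable singularity---are essentially bookkeeping once the chart-by-chart setup is in place.
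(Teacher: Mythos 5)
Your proposal is correct and follows essentially the same route as the paper: cover $C_0\setminus N$ by regular charts, use conformal invariance to apply Uhlenbeck compactness (Theorem \ref{cpt}) chart by chart, diagonalize, and bound the number of bubble points by $E_0/\varepsilon'_0$, with the measure convergence inherited from \eqref{measure conv1}. The extra details you supply (patching limits on chart overlaps and removing singularities at the points of $Q$) are implicit in the paper's use of Theorem \ref{cpt}, not a different argument.
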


\begin{proof}
Cover $C_{0} \setminus N$ by finitely many regular chart $\{U_{i} \times B\}$.
Since the regular chart is holomorphic coordinate chart, $\tilde{f}_{k}(x)$ is also harmonic maps over $U_{i}$ for each $i$.
By Uhlenbeck compactness theorem \ref{cpt}, there is a subsequence $n_{k,1}$ such that $\tilde{f}_{n_{k,1}}$ converges on $U_{1} \setminus Q_{1}$ where $Q_{1}$ is a finite set.
Applying Theorem \ref{cpt} again to find subsequence $n_{k,2}$ of $n_{k,1}$ such that $\tilde{f}_{n_{k,2}}$ converges on $U_{2} \setminus Q_{2}$ where $Q_{2}$ is a finite set.
Repeat this process to obtain $n_{k,i}$ for $U_{i}$, and choose diagonal subsequence $n_{k} = n_{k,k}$.
Then $\tilde{f}_{n_{k}}$ converges on $U_{i} \setminus Q_{i}$ for all $i$, where $Q_{i}$ is a finite set.
Note that at each point $p \in Q_{i}$, by \eqref{measure conv1}, the energy concentration at $p$ is at least $\varepsilon'_{0}$.
Since the total energy is finite, $Q = \cup Q_{i}$ is at most finite.
The measure convergence \eqref{meas conv} comes from \eqref{measure conv1}.
\end{proof}

The above lemma says that energy loss may occur only at points in $S$.
\begin{definition}\label{bubble pt}
In Lemma \ref{loc conv}, we say $p \in Q$ a smooth bubble point and $p \in N$ a nodal bubble point.
\end{definition}

The above lemma only says about smooth bubble points and nothing about nodes.
So the question is: does $f_k$ satisfies the zero neck property at each node?
We will see one sufficient condition that this is true in Section \ref{section5}.

%%%%%%%%%%%%%%%%%%%%%%%%%%%%%%%%%%%%%%%%%%%%%%%%%%%%%%%%%%%%
%%%%%%%%%%%%%%%%%  Section 4  %%%%%%%%%%%%%%%%%%%%%%%%%%%%%%
%%%%%%%%%%%%%%%%%%%%%%%%%%%%%%%%%%%%%%%%%%%%%%%%%%%%%%%%%%%%%%
\section{Neck analysis}
\label{section4}

In this section we will deal with sequence of harmonic maps over the neck region.
Throughout this section, we will use both nodal chart or cylindrical chart that are described below.
Many of the arguments in here can be found in \cite{P}, \cite{Z}, or \cite{LW2}.

First we set up the terminology.
Consider a sequence of harmonic maps $f_{k} : C_{k} \rightarrow X$ defined on family of stable curves $C_{k}$ with $C_{k} \rightarrow C_{0}$ where $C_{0}$ is a limit curve with a node $p$.
For any $\delta>0$, using nodal chart, we can denote $B(p,\delta) \cap C_{k}= \{x,y \in \mathbb{C}^{2} : xy = t_{k}, \lvert x \rvert, \lvert y \rvert \leq \delta\}$ and $t_{k} \rightarrow 0$ as $k \rightarrow \infty$.
Consider polar coordinate $x = (r,\theta)$, and take log by $(r,\theta) \rightarrow (t,\theta)$ where $t = \ln(r/\sqrt{t_{k}})$.
This gives a cylindrical coordinate over the neck which is conformal to the original nodal chart, given by
\begin{equation} \label{cyl coord}
\phi : [-T_{k}^{\delta},T_{k}^{\delta}] \times S^{1} \rightarrow B(p,\delta) \cap C_{k}
\end{equation}
where $\phi(t,\theta) = (x,y) = (\sqrt{t_{k}} e^{t+i \theta}, \sqrt{t_{k}} e^{-t-i \theta})$ and $T_{k}^{\delta} = \ln(\delta/\sqrt{t_{k}})$.
Note that pullback of the metric to the cylinder is conformally equivalent to $dt^{2}+d\theta^{2}$, so we can consider flat metric over the cylinder.
For simplicity, we often denote $f_{k} \circ \phi$ by $f_k$, or simply by $f$.

By \cite[Lemma 3.3]{Z}\label{alpha}, the quantity
\begin{equation*}
\alpha(t) := \frac{1}{2} \int_{\{t\} \times S^{1}} \lvert f_{t} \rvert^{2} - \lvert f_{\theta} \rvert^{2} d\theta
\end{equation*}
is a constant $\alpha$, which is independent of $t$. Hence, the energy can be written as
\begin{equation} \label{energy theta}
E(f) = \frac{1}{2} \iint_{[-T_{k}^{\delta},T_{k}^{\delta}] \times S^{1}} \lvert f_{t} \rvert^{2} + \lvert f_{\theta} \rvert^{2} dt\,d\theta = 2T_{k}^{\delta} \alpha + \int_{-T_{k}^{\delta}}^{T_{k}^{\delta}} \Theta dt
\end{equation}
where
\begin{equation*}
\Theta(t) := \int_{\{t\} \times S^{1}} \lvert f_{\theta} \rvert^{2} d\theta.
\end{equation*}
Moreover, by \cite[Lemma 3.1]{Z}, there exists $\varepsilon_{0}''>0$ such that if $f : [-T_{k}^{\delta},T_{k}^{\delta}] \times S^{1} \rightarrow X$ is a harmonic map and $E(f) \leq \varepsilon_{0}''$, then
\begin{equation*}
\Theta'' \geq \Theta > 0
\end{equation*}
and that for any $-T_{k}^{\delta} \leq T_{1} < T_{2} \leq T_{k}^{\delta}$, we have
\begin{align*}
\int_{T_{1}}^{T_{2}} \Theta dt &\leq 2(\Theta(T_{1}) + \Theta(T_{2})),\\
\int_{T_{1}}^{T_{2}} \sqrt{\Theta} dt &\leq 4(\sqrt{\Theta(T_{1})} + \sqrt{\Theta(T_{2})}).
\end{align*}

From the above, energy is controlled by $\alpha$ and boundary values of $\Theta$.
We need one more quantity.
Let the average length of the neck be given by:
\begin{equation*}
\overline{L} := \frac{1}{2\pi}\iint_{[-T_{k}^{\delta},T_{k}^{\delta}] \times S^{1}} \lvert f_{t} \rvert dtd\theta.
\end{equation*}
Then
\begin{align*}
\overline{L} &\leq \frac{1}{2\pi} \int_{-T_{k}^{\delta}}^{T_{k}^{\delta}} \sqrt{2\pi} \left( \int_{S^{1}}\lvert f_{t} \rvert^{2} d\theta \right)^{1/2} dt \\
&\leq \frac{1}{2\pi} \int_{-T_{k}^{\delta}}^{T_{k}^{\delta}} \sqrt{2\pi} \left[ \left( \int_{S^{1}}\lvert f_{t} \rvert^{2} - \lvert f_{\theta} \rvert^{2} d\theta \right)^{1/2}  + \left( \int_{S^{1}} \lvert f_{\theta} \rvert^{2} d\theta \right)^{1/2} \right] dt \\
&= \frac{2}{\sqrt{2 \pi}} \sqrt{\alpha}T_{k}^{\delta} + \frac{1}{\sqrt{2\pi}} \int_{-T_{k}^{\delta}}^{T_{k}^{\delta}} \sqrt{\Theta}dt.
\end{align*}

Now we state and prove the main proposition of the section.

\begin{prop} \label{gap2}
Suppose $f_{k} : [-T_{k}^{\delta},T_{k}^{\delta}] \times S^{1} \rightarrow X$ is a sequence of harmonic maps and $E(f_{k}) \leq \varepsilon_{0}''$ for all $k$.
For any $\varepsilon>0$, there is $\delta_{1}>0$ such that for any $0<\delta<\delta_{1}$ and for all $k$ sufficiently large,
\begin{align*}
E(f_{k}, [-T_{k}^{\delta},T_{k}^{\delta}] \times S^{1}) &\leq 2T_{k}^{\delta} \alpha + \varepsilon,\\
\mathrm{diam}(f_{k}(  [-T_{k}^{\delta},T_{k}^{\delta}] \times S^{1}) &\leq C \sqrt{\alpha} T_{k}^{\delta} + \varepsilon.
\end{align*}
\end{prop}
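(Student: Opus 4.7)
The plan is to extract both estimates from the energy identity \eqref{energy theta} and the $\Theta$-estimates recalled from \cite[Lemma 3.1]{Z}. Since
\begin{equation*}
E(f_{k},[-T_{k}^{\delta},T_{k}^{\delta}]\times S^{1}) - 2T_{k}^{\delta}\alpha \;=\; \int_{-T_{k}^{\delta}}^{T_{k}^{\delta}}\Theta\, dt,
\end{equation*}
and the average length is already controlled by $\overline{L}\leq \tfrac{2}{\sqrt{2\pi}}\sqrt{\alpha}\,T_{k}^{\delta} + \tfrac{1}{\sqrt{2\pi}}\int\sqrt{\Theta}\,dt$, both conclusions of the proposition reduce to making $\int\Theta\,dt$ and $\int\sqrt{\Theta}\,dt$ small. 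The integral bounds
\begin{equation*}
\int_{-T_{k}^{\delta}}^{T_{k}^{\delta}}\Theta\,dt \leq 2\bigl(\Theta(-T_{k}^{\delta})+\Theta(T_{k}^{\delta})\bigr), \qquad \int_{-T_{k}^{\delta}}^{T_{k}^{\delta}}\sqrt{\Theta}\,dt \leq 4\bigl(\sqrt{\Theta(-T_{k}^{\delta})}+\sqrt{\Theta(T_{k}^{\delta})}\bigr)
\end{equation*}
from \cite[Lemma 3.1]{Z} then further reduce everything to showing that the boundary values $\Theta(\pm T_{k}^{\delta})$ can be made arbitrarily small as $\delta\to 0$ and $k\to\infty$.

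The crux is this boundary decay. My plan is to view $\pm T_{k}^{\delta}$ as strict interior points of a larger cylinder by fixing some $\delta_{0}>\delta$: the nodal chart of Definition~\ref{nodal family} provides a neighborhood of the node containing the $\delta_{0}$-ball, on which $f_{k}$ is harmonic for $k$ large. On this larger cylinder, $\varepsilon$-regularity applied at interior points near the $\delta_{0}$-boundary yields a $k$-uniform bound $\Theta(t)\leq M$ for $|t|$ close to $T_{k}^{\delta_{0}}$. The differential inequality $\Theta''\geq\Theta$ combined with the maximum principle---comparing $\Theta$ to the unique solution $h$ of $h''=h$ matching these outer boundary values---then yields
\begin{equation*}
\Theta(\pm T_{k}^{\delta}) \;\leq\; C_{1} M\,\frac{\cosh(T_{k}^{\delta})}{\cosh(T_{k}^{\delta_{0}})} \;\leq\; C_{1}' M\cdot\frac{\delta}{\delta_{0}},
\end{equation*}
which can be made smaller than any preassigned $\varepsilon$ by shrinking $\delta_{1}$, uniformly in large $k$. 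The same comparison bounds $\Theta$ pointwise on the sub-cylinder $[-T_{k}^{\delta},T_{k}^{\delta}]$ by the same quantity.

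To convert the control on $\overline{L}$ into a diameter bound, I would use an averaging argument: there exists $\theta^{*}\in S^{1}$ with $\int_{-T_{k}^{\delta}}^{T_{k}^{\delta}}|f_{t}|(t,\theta^{*})\,dt \leq \overline{L}$. For any two points $(t_{1},\theta_{1})$ and $(t_{2},\theta_{2})$, the triangle inequality along the detour through $(t_{1},\theta^{*})$ and $(t_{2},\theta^{*})$, combined with a Cauchy--Schwarz estimate of each $\theta$-arc in terms of $\sqrt{\Theta}$, gives
\begin{equation*}
\bigl|f_{k}(t_{1},\theta_{1})-f_{k}(t_{2},\theta_{2})\bigr| \;\leq\; \overline{L} + 2\sqrt{2\pi\,\Theta_{\max}}.
\end{equation*}
Feeding in the pointwise bound on $\Theta$ and the displayed control on $\overline{L}$ then produces the required $C\sqrt{\alpha}\,T_{k}^{\delta} + \varepsilon$.

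\emph{Main obstacle.} The only nontrivial step is securing the outer bound $\Theta\leq M$ near $\pm T_{k}^{\delta_{0}}$ uniformly in $k$. This rests on geometric input beyond the statement itself---that the nodal charts actually provide room strictly larger than the $\delta$-ball, and that $\varepsilon$-regularity is applicable on unit disks sitting inside the enlarged cylinder. Once this is in place, the remainder is a routine ODE comparison dressed up by Zhu's integral inequalities.
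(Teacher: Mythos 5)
Your proposal is correct, but the mechanism you use for the crucial smallness step differs from the paper's. Both arguments share the same skeleton: reduce the energy bound to $\int\Theta\,dt$ via \eqref{energy theta}, reduce the diameter bound to $\overline{L}$ plus two $\theta$-arcs estimated by $\sqrt{2\pi\Theta}$, and invoke Zhu's inequalities. The difference is in how $\Theta$ is made small near $\pm T_{k}^{\delta}$. The paper does not argue intrinsically on the cylinder: it uses the ambient context that the cylinders are neck regions of a converging family, so that (after a subsequence) $f_{k}\rightarrow f_{0}$ in $C^{1}$ away from the node; it then chooses $\delta_{1}$ so that the limit map $f_{0}$ has energy at most $\varepsilon/2$ on the annulus $B_{\delta}\setminus B_{\delta/2}$ and image circles $f_{0}(\lvert x\rvert=\delta)$, $f_{0}(\lvert y\rvert=\delta)$ of length at most $\varepsilon/2$, transfers these to $f_{k}$ for large $k$, picks mean-value levels $a,b$ in the two end collars, and applies the integral inequality on $[a,b]$; the circle-length bound plus $\varepsilon$-regularity then controls $\int\lvert f_{\theta}\rvert\,d\theta$ and hence $\Theta(\pm T_{k}^{\delta})$. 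You instead work on a slightly larger neck of fixed scale $\delta_{0}$, get a $k$-uniform interior bound $\Theta\leq M$ from $\varepsilon$-regularity, and run the $\cosh$ comparison with $\Theta''\geq\Theta$ to obtain the exponential decay $\Theta(\pm T_{k}^{\delta})\lesssim M\,\delta/\delta_{0}$. Your route buys a quantitative, subsequence-free decay rate and avoids any reference to the limit map $f_{0}$; its price is that you must know $f_{k}$ is harmonic with energy below the threshold $\varepsilon_{0}''$ on the enlarged cylinder so that Zhu's convexity inequality applies there --- exactly the ``extra room'' you flag. That is legitimate in the intended application (Lemma \ref{gap neck} assumes small energy on a ball of fixed radius $\delta_{0}$), and it is no worse than the paper's own reliance on contextual input ($C^{1}$ convergence off the node, which is likewise absent from the literal statement). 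Your diameter argument (averaged longitudinal line $\theta^{*}$, detour through it, Cauchy--Schwarz on the arcs) is essentially the paper's, so no gap there.
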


\begin{proof}
Consider the nodal chart of the neck $B(p,\delta) \cap C_{k}$ and $C_{k} \rightarrow C_{0}$ in a family $\mathcal{C}$.
Away from the node $p$ and by choosing subsequence, $f_{k} \rightarrow f_{0}$ in $C^{1}$ for some $f_{0} : C_{0} \setminus \{p\} \rightarrow X$.
For simplicity, denote $B_{\delta} = B(p,\delta)$.
We first claim that, for any $\varepsilon>0$, there is $\delta_1>0$ such that for any $0<\delta<\delta_1$ and for all $k$ sufficiently large,
\begin{equation} \label{theta eq3}
\int_{-T_{k}^{\delta}}^{T_{k}^{\delta}} \Theta dt \leq \varepsilon, \qquad \int_{-T_{k}^{\delta}}^{T_{k}^{\delta}} \sqrt{\Theta} dt \leq \varepsilon, \qquad \textrm{ and } \qquad \int_{\{t\} \times S^{1}} \lvert f_{\theta} \rvert d\theta \leq \varepsilon.
\end{equation}

Fix $\varepsilon>0$.
Choose $\delta_{1}>0$ such that for all $\delta < \delta_1$,
\begin{enumerate}
\item
$E(f_0, (B_{\delta} \setminus B_{\delta/2}) \cap C_0) \le \varepsilon/2$, and
\item
the length of $f_{0}(\lvert x \rvert = \delta)$ and $f_{0}(\lvert y \rvert = \delta)$ are less than $\varepsilon/2$.
\end{enumerate}

Fix such $\delta$, then for all $k$ sufficiently large, 
\begin{enumerate}
\item
$E(f_{k},(B_{\delta} \setminus B_{\delta/2}) \cap C_{k}) \le \varepsilon$, and
\item
$\displaystyle \int_{\{\pm T_{k}^{\delta}\} \times S^{1}} \lvert f_{\theta} \rvert d\theta \leq \varepsilon$.
\end{enumerate}

Using cylindrical domain, the first energy bound inequality means that
\begin{equation*}
E(f_{k},B_{\delta} \setminus B_{\delta/2}) = \int_{-T_{k}^{\delta}}^{-T_{k}^{\delta}+\ln2} \int_{S^{1}} \lvert f_{t} \rvert^{2} + \lvert f_{\theta} \rvert^{2} d\theta\, dt + \int_{T_{k}^{\delta} - \ln2}^{T_{k}^{\delta}} \int_{S^{1}}\lvert f_{t} \rvert^{2} + \lvert f_{\theta} \rvert^{2} d\theta\, dt \leq \varepsilon.
\end{equation*}

Now fix $\delta,k$.
Choose $a \in (-T_{k}^{\delta},-T_{k}^{\delta}+\ln2), b \in (T_{k}^{\delta}-\ln2, T_{k}^{\delta})$ such that
\begin{equation*}
\Theta(a) = \frac{1}{\ln2}\int_{-T_{k}^{\delta}}^{-T_{k}^{\delta}+\ln2} \Theta dt, \qquad \Theta(b) = \frac{1}{\ln2}\int_{T_{k}^{\delta}-\ln2}^{T_{k}^{\delta}} \Theta dt.
\end{equation*}
Then,
\begin{align*}
\int_{-T_{k}^{\delta}}^{T_{k}^{\delta}} \Theta dt &= \int_{-T_{k}^{\delta}}^{a} \Theta dt + \int_{b}^{T_{k}^{\delta}} \Theta dt + \int_{a}^{b} \Theta dt\\
&\leq \int_{-T_{k}^{\delta}}^{-T_{k}^{\delta}+\ln2} \Theta dt+ \int_{T_{k}^{\delta}-\ln2}^{T_{k}^{\delta}} \Theta dt + 2 ( \Theta(a) + \Theta(b))\\
&= (1 + \frac{2}{\ln2}) \left( \int_{-T_{k}^{\delta}}^{-T_{k}^{\delta}+\ln2} \Theta dt+ \int_{T_{k}^{\delta}-\ln2}^{T_{k}^{\delta}} \Theta dt \right) \leq (1 + \frac{2}{\ln2}) \varepsilon.
\end{align*}

Second inequality in \eqref{theta eq3} can be obtained in the similar manner.

For the last inequality in \eqref{theta eq3}, using $\varepsilon$-regularity, %%%%%%%%%%%%%%%%%%%%%need ref!!!
\begin{align*}
\int_{\{t\} \times S^{1}} \lvert f_{\theta} \rvert d\theta &\leq \sqrt{2\pi} \sqrt{\Theta(t)} \leq \sqrt{2 \pi} \sqrt{\Theta(\pm T_{k}^{\delta})} = \sqrt{2 \pi} \left(\int_{\{\pm T_{k}^{\delta}\} \times S^{1}} \lvert f_{\theta} \rvert^{2} d\theta \right)^{1/2}\\
&\leq \sqrt{2 \pi} \sqrt{\sup \lvert df \rvert} \left(\int_{\{\pm T_{k}^{\delta}\} \times S^{1}} \lvert f_{\theta} \rvert d\theta \right)^{1/2}\\
&\leq C \sqrt{\varepsilon''_{0}} \sqrt{\varepsilon}.
\end{align*}

Now $E \leq 2T_{k}^{\delta} \alpha + \varepsilon$ is clear from \eqref{energy theta}.
To see the diameter, choose $\theta_{0}$ such that $\overline{L} = L_{\theta_{0}}$.
Let $(t_{1},\theta_{1})$ and $(t_{2},\theta_{2})$ be such that
\begin{equation*}
\max_{x,y \in [-T_{k}^{\delta},T_{k}^{\delta}] \times S^{1}} (\lvert f(x) - f(y) \rvert) = \lvert f(t_{1},\theta_{1}) - f(t_{2},\theta_{2}) \rvert.
\end{equation*}
Then,
\begin{align*}
\mathrm{diam} f &\leq \lvert f(t_{1},\theta_{1}) - f(t_{1},\theta_{0}) \rvert + \lvert f(t_{1},\theta_{0})-f(t_{2},\theta_{0}) \rvert + \lvert f(t_{2},\theta_{0}) - f(t_{2},\theta_{2}) \rvert\\
&\leq \int_{\{t_{1}\} \times S^{1}} \lvert f_{\theta} \rvert d\theta + L_{\theta_{0}}+ \int_{\{t_{2}\} \times S^{1}} \lvert f_{\theta} \rvert d\theta \leq 2\varepsilon + \overline{L}.
\end{align*}
This completes the proof.
\end{proof}

%%%%%%%%%%%%%%%%%%%%%%%%%%%%%%%%%%%%%%%%%%%%%%%%%%%%%%%%%%%%
%%%%%%%%%%%%%%%%%  Section 5  %%%%%%%%%%%%%%%%%%%%%%%%%%%%%%
%%%%%%%%%%%%%%%%%%%%%%%%%%%%%%%%%%%%%%%%%%%%%%%%%%%%%%%%%%%%%%
\section{Continuity at Regular nodes}
\label{section5}

In general, $\alpha$ in the previous section is not small enough to obtain the zero neck property at each node.
However, if the node is {\em regular}, $\alpha$ vanishes and hence the zero neck property holds.
Here the existence of forgetful map is crucial.
We first define the notion of regular node.

\begin{definition}\label{reg node def}
Let $\pi : \mathcal{C} \rightarrow B$ be a family of $(g,n)$ curves and $p$ be a nodal point.
We say $p$ is a regular node if there exists a family of $(g,l)$ curves $\overline{\pi} : \overline{\mathcal{C}} \rightarrow \overline{B}$ with $l<n$ and a forgetful map $\Phi : \mathcal{C} \rightarrow \overline{\mathcal{C}}$ such that $\overline{p} := \Phi(p)$ is a regular point.
\end{definition}

\begin{lemma}\label{gap neck}
(Energy gap in the neck)
Suppose $p$ be a regular node and $E(f_{k}, B(p,\delta_{0}) \cap C_{k}) \leq \varepsilon'_{0}$ for some $\delta_{0}>0$ and for all $k$.
Then $f_{k}$ satisfies the zero neck property at $p$.
\end{lemma}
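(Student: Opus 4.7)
The plan is to transfer the problem from $\mathcal{C}$, where the neck at $p$ is degenerating, to the simpler family $\bar{\mathcal{C}}$ supplied by the regular-node hypothesis. In $\bar{\mathcal{C}}$ the point $\bar{p}=\Phi(p)$ is regular, so no complex structure degenerates there, and standard harmonic-map compactness at a regular point will produce the required energy and diameter bounds.

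I would first set up the comparison. By Definition \ref{reg node def} there are $\bar{\pi}:\bar{\mathcal{C}}\to\bar{B}$ and a forgetful map $\Phi:\mathcal{C}\to\bar{\mathcal{C}}$ with $\bar{p}=\Phi(p)$ regular. For $k$ large $C_k$ is smooth, and since forgetting marked points leaves the underlying Riemann surface unchanged, $\Phi|_{C_k}:C_k\to\bar{C}_k:=\Phi(C_k)$ is a biholomorphism. Pushing forward gives a harmonic map $\bar{f}_k:=f_k\circ(\Phi|_{C_k})^{-1}:\bar{C}_k\to X$ with the same local energy densities and image as $f_k$. Choosing a regular chart $(\bar{x},\bar{z})$ around $\bar{p}$ in $\bar{\mathcal{C}}$, the geometric claim I need is that
\[
\bar{U}_k(\delta) := \Phi(B(p,\delta)\cap C_k)
\]
is contained in a disk $D(\bar{p},\rho(\delta))\cap \bar{C}_k$ around $\bar{p}$ with $\rho(\delta)\to 0$ as $\delta\to 0$, uniformly in $k$ large. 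This uses that $\Phi$ carries the nodal chart around $p$ into the regular chart around $\bar{p}$, together with the smooth convergence $\bar{C}_k\to\bar{C}_0$ in that chart.

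Conformal invariance of the Dirichlet energy then yields $E(\bar{f}_k,\bar{U}_k(\delta_0))=E(f_k,B(p,\delta_0)\cap C_k)\le\varepsilon_0'$, which (after replacing $\varepsilon_0'$ by $\min(\varepsilon_0,\varepsilon_0')$ if necessary) places us in the small-energy regime at a regular point. The $\varepsilon$-regularity and $C^1$-convergence statements of the first theorem of Section \ref{section2}, applied in the regular chart around $\bar{p}$, then produce a subsequence along which $\bar{f}_k\to\bar{f}_0$ in $C^1$ on a fixed smaller disk around $\bar{p}$, with $\bar{f}_0$ smooth at $\bar{p}$.

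Finally, given $\varepsilon>0$, continuity of $\bar{f}_0$ at $\bar{p}$ and $C^1$-convergence supply $\bar{\delta}>0$ and $k_0$ such that for all $k\ge k_0$ both $E(\bar{f}_k,D(\bar{p},\bar{\delta})\cap\bar{C}_k)<\varepsilon$ and $\mathrm{diam}\,\bar{f}_k(D(\bar{p},\bar{\delta})\cap\bar{C}_k)<\varepsilon$. Choosing $\delta_1$ with $\rho(\delta_1)<\bar{\delta}$ and pulling back via $\Phi|_{C_k}$, which preserves energy and image diameter, yields the zero neck property at $p$; the standard subsequence-of-subsequence trick upgrades the statement from the extracted subsequence to the full sequence. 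The main obstacle I foresee is establishing the geometric inclusion $\bar{U}_k(\delta)\subset D(\bar{p},\rho(\delta))\cap\bar{C}_k$ with a uniform shrinking estimate in $k$; this requires a careful local description of how the forgetful map acts on the nodal chart at $p$, after which the remainder is standard harmonic-map compactness together with conformal invariance.
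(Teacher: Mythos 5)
Your reduction to the family $\bar{\mathcal{C}}$ starts the same way the paper does (push forward by the forgetful map and work near the regular point $\bar{p}$), but the step where you invoke $\varepsilon$-regularity and $C^{1}$-convergence for $\bar{f}_{k}$ on a \emph{fixed disk around $\bar{p}$} has a genuine gap, and it is not the inclusion $\bar{U}_{k}(\delta)\subset D(\bar{p},\rho(\delta))$ that you worried about (in the adapted charts of Lemma \ref{reg node} one has $\Phi(x,y,b)=(z,\bar{b})$ with $z=x$, so that inclusion is immediate). The problem is that the hypothesis of the lemma only bounds the energy of $f_{k}$ on the neck $B(p,\delta_{0})\cap C_{k}$, i.e.\ on the annulus $\{|t_{k}|/\delta_{0}\le |z|\le\delta_{0}\}$ in $\bar{C}_{k}$, whereas a fixed disk $D(\bar{p},\rho_{0})\cap\bar{C}_{k}$ pulls back under $\Phi|_{C_{k}}^{-1}$ to a region of $C_{k}$ that also contains everything converging to the components of $C_{0}$ collapsed by $\Phi$. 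Since $\Phi(p)=\bar{p}$ is regular, at least one side of the node is such a collapsed component, and in exactly the situations where the lemma is applied (e.g.\ Lemma \ref{mark}, where the collapsed curve $E$ is a bubble) that component carries energy $m\ge 2\bar{\varepsilon}$, which may well exceed $\varepsilon_{0}$. Consequently $E(\bar{f}_{k},D(\bar{p},\rho_{0})\cap\bar{C}_{k})$ need not be small, energy concentrates at $\bar{p}$, and the claimed $C^{1}$-convergence of $\bar{f}_{k}$ on a neighborhood of $\bar{p}$ (and hence the smallness of energy and of image diameter on small disks, and your final pullback bound $E(f_{k},B(p,\delta)\cap C_{k})\le E(\bar{f}_{k},D(\bar{p},\rho(\delta))\cap\bar{C}_{k})$) fails. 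In particular the diameter bound — the genuinely hard ``no-neck'' part — cannot be obtained by this soft compactness argument.

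What the regular-node hypothesis actually buys, and what the paper uses, is only a circle-integral identity: since $F_{k}=f_{k}\circ\Phi^{-1}$ is harmonic on the full disk around $\bar{p}$ in the smooth fiber $\bar{C}_{k}$, the Pohozaev identity forces the constant $\alpha$ of the cylinder coordinates to vanish (Lemma \ref{reg node}); this needs no small-energy control near $\bar{p}$ and is insensitive to concentration there. The zero neck property is then obtained from Proposition \ref{gap2}, whose proof runs through the cylinder analysis ($\Theta''\ge\Theta>0$, control of $\int\Theta$ and $\int\sqrt{\Theta}$ by boundary values, and the average-length estimate), giving $E\le 2T_{k}^{\delta}\alpha+\varepsilon$ and $\mathrm{diam}\le C\sqrt{\alpha}\,T_{k}^{\delta}+\varepsilon$, which collapse to the desired bounds once $\alpha=0$. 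To repair your argument you would have to replace the appeal to $C^{1}$-convergence at $\bar{p}$ by this Pohozaev/cylinder mechanism (or by some other quantitative neck estimate valid in the presence of concentration at $\bar{p}$); as written, the proposal proves the lemma only under the much stronger, unstated assumption that no energy concentrates on the collapsed side of the node.
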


Lemma \ref{gap neck} provides another energy quantization $\varepsilon''_{0}$ over regular nodes.
Combined the energy quantization $\varepsilon'_{0}$ at smooth bubble points, these quantizations are the key of the definition of residual energy.
Proof of this lemma will be given at the end of the section.

\begin{lemma}\label{reg node}
Suppose $f_{k} : C_{k} \rightarrow X$ is a sequence of harmonic maps, $C_{k} \rightarrow C_{0}$ and $p \in C_{0}$ is a regular node.
Then in the cylindrical coordinate,
\begin{equation*}
\int_{0}^{2\pi} \lvert f_{t} \rvert^{2} d\theta = \int_{0}^{2\pi} \lvert f_{\theta} \rvert^{2} d\theta
\end{equation*}
which means that $\alpha = 0$.
\end{lemma}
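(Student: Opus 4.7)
The approach is to combine the holomorphicity of the Hopf differential of a harmonic map with the existence of the forgetful map provided by regularity of the node. By Definition \ref{reg node def}, there is a forgetful map $\Phi:\mathcal{C}\to\overline{\mathcal{C}}$ with $\overline{p}:=\Phi(p)$ a regular point of $\overline{\mathcal{C}}$. In suitable local coordinates this is the standard blow-down: take a nodal chart $(x,y)$ around $p$ with $\pi(x,y)=xy$ and a regular chart $(\xi,\zeta)$ around $\overline{p}$ with $\overline{\pi}(\xi,\zeta)=\zeta$, arranged so that $\Phi(x,y)=(x,xy)$. For $t_k\neq 0$ the fiber $C_k$ is smooth, and since the forgetful map contracts unstable components only in nodal fibers, $\Phi|_{C_k}$ is a biholomorphism onto the smooth fiber $\overline{C}_k$. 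Viewed through this biholomorphism, $f_k$ is harmonic on a smooth holomorphic disk in $\overline{C}_k$ containing $\xi=0$ in its interior.

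The Hopf differential of $f_k$ on $\overline{C}_k$ in the $\xi$ coordinate is then $\overline{h}_k(\xi)\,d\xi^2$ with $\overline{h}_k$ holomorphic on the entire disk, so it admits a Taylor expansion $\overline{h}_k(\xi)=\sum_{n\geq 0} a_{n,k}\xi^n$ with no negative powers. Pulling back via $\xi=x$ identifies this with the Hopf differential of $f_k$ in the nodal chart, namely $h_k(x)\,dx^2$ with $h_k=\overline{h}_k$; the point is that regularity of the node upgrades the a priori Laurent expansion on the annular neck to an honest Taylor expansion. In the cylindrical coordinate $z=t+i\theta=\log(x/\sqrt{t_k})$ the relation $dx=x\,dz$ gives
\begin{equation*}
\varphi_k = h_k(x)\,dx^2 = x^{2}\,h_k(x)\,dz^2 = \tilde{h}_k(z)\,dz^2,\qquad \tilde{h}_k(z)=\sum_{n\geq 0} a_{n,k}\,t_k^{(n+2)/2}\,e^{(n+2)z}.
\end{equation*}
Every Fourier mode $e^{im\theta}$ appearing in $\tilde{h}_k$ has $m=n+2\geq 2$, so the zeroth Fourier coefficient vanishes identically in $t$: $\int_0^{2\pi}\tilde{h}_k(t+i\theta)\,d\theta=0$. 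Since $\tilde{h}_k=\langle f_z,f_z\rangle$ with $f_z=\tfrac12(f_t-if_\theta)$, one has $\mathrm{Re}\,\tilde{h}_k=\tfrac14(|f_t|^2-|f_\theta|^2)$, and taking real parts gives the stated identity $\int_0^{2\pi}|f_t|^2\,d\theta=\int_0^{2\pi}|f_\theta|^2\,d\theta$, i.e., $\alpha=0$.

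The main obstacle is the coordinate setup. Definition \ref{reg node def} only asserts the existence of some forgetful map sending $p$ to a regular point; making rigorous the local blow-down form $\Phi(x,y)=(x,xy)$, and the resulting identification of the nodal-chart fiber coordinate $x$ with the regular-chart coordinate $\xi$ on the non-bubble side, requires some bookkeeping about nodal versus regular charts. Once this identification is in place, the rest is just the Fourier expansion of a function that turns out to be holomorphic on a disk rather than merely on an annulus, and the argument in fact yields $\alpha=0$ for every $k$, not merely in the limit.
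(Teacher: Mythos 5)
Your proposal is correct and is essentially the paper's argument: the paper likewise uses the forgetful map to view $f_k$ (after composing with $\Phi^{-1}$, with the fiber coordinate arranged so that $z=x$) as a harmonic map on a full disk around $\overline{p}$, and then applies the Pohozaev identity $\int_0^{2\pi}|F_\phi|^2\,d\phi = r^2\int_0^{2\pi}|F_r|^2\,d\phi$, which transforms under the cylindrical coordinate change exactly into $\alpha=0$. Your Taylor/Fourier expansion of the Hopf differential is just a self-contained derivation of that same Pohozaev identity (the vanishing of the zeroth Fourier mode of $x^2 h_k(x)$), which the paper instead cites from Sacks--Uhlenbeck and Lin--Wang.
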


\begin{proof}
Let $\Phi : \mathcal{C} \rightarrow \overline{\mathcal{C}}$ be a forgetful map such that $\overline{p} = \Phi(p)$ is a regular point.
Choose a regular chart $(z,\overline{b})$ of $\overline{\mathcal{C}}$ centered at $\overline{p} = (0,0)$ and denote $B_{\delta} \subset \overline{C_0}$ by a ball of radius $\delta$ centered at $\overline{p}$ in this chart.
Then $\Phi^{-1}(B_{\delta} \setminus \{\overline{p}\})$ is biholomorphic to a punctured disk $\mathring{B} \subset C_0$ while $\Phi^{-1}(\overline{p})$ is a union of components with marked points on them.
Without loss of generality, we can assume that $p$ is the puncture of $\mathring{B}$ in which the other component meets.
Pick a nodal chart near $p$ of $C_{k} \cap B(p,\delta)$ given by $(x,y,b)$ with $x \in \mathring{B}$ such that $xy=t_{k}$ for some $t_{k}$ and $|x|,|y| \leq \delta$.
If necessary, by modifying the charts, we can arrange that $\Phi(x,y,b) = (z,\overline{b})$ with $z=x$.

Denote $F_{k} := f_{k} \circ \Phi^{-1} : \overline{C_{k}} \rightarrow X$, which is also harmonic.
By Pohozaev identity,
\begin{equation} \label{Poho}
\int_{0}^{2\pi} \big\lvert F_{\phi} \big\rvert^{2} d\phi = r^{2} \int_{0}^{2\pi} \big\lvert F_{r} \big\rvert^{2} d\phi
\end{equation}
where $(r,\phi)$ is the polar coordinate given by $z = re^{i\phi}$.
For the proof of this identity, see \cite[Lemma 3.5]{SU} or \cite[Lemma 6.1.5]{LW}.

Recall the cylindrical coordinate $\phi : [-T_{k}^{\delta},T_{k}^{\delta}] \times S^{1} \rightarrow C_{k}$ given by \eqref{cyl coord}.
Then $\Phi \circ \phi : [-T_{k}^{\delta},T_{k}^{\delta}] \times S^{1} \rightarrow \overline{C_{k}}$ gives a coordinate change given by $\Phi \circ \phi (t,\theta) = x = \sqrt{t_{k}}e^{t+i\theta}$, so
\begin{equation*}
(F \circ \Phi \circ \phi)_{t} = \sqrt{t_{k}}e^{t}F_{r} = r F_{r}, \quad (F \circ \Phi \circ \phi)_{\theta} = F_{\phi}.
\end{equation*}
Then \eqref{Poho} becomes
\begin{equation*}
\int_{0}^{2\pi} \big\lvert (F \circ \Phi \circ \phi)_{\theta} \big\rvert^{2} d\theta = \int_{0}^{2\pi} \big\lvert (F \circ \Phi \circ \phi)_{t} \big\rvert^{2} d\theta.
\end{equation*}
Since $F \circ \Phi \circ \phi = f \circ \phi$, this proves the lemma.
\end{proof}

\begin{proof}
(Proof of Lemma \ref{gap neck})
By Proposition \ref{gap2} and Lemma \ref{reg node}.
\end{proof}

The assumption that $p$ is a regular node is non-avoidable.
Parker \cite{P} showed that there could be non-regular nodes by providing an example that the zero neck property fails.
In the language of Deligne-Mumford moduli space, his example can be seen as a family of torus with singular central fiber, which is a pinched torus.
Note that this node is not regular, because there is no forgetful map from this singular family to family with smaller marked points.
However, by modifying the example, one can get zero energy and zero (average) length of the neck, which satisfies the zero neck property.
(See \cite{Z} for details.)
This modification suggests the possibility that even over non-regular nodes, the zero neck property may hold, but not much research is done in this direction.

%%%%%%%%%%%%%%%%%%%%%%%%%%%%%%%%%%%%%%%%%%%%%%%%%%%%%%%%%%%%
%%%%%%%%%%%%%%%%%  Section 6  %%%%%%%%%%%%%%%%%%%%%%%%%%%%%%
%%%%%%%%%%%%%%%%%%%%%%%%%%%%%%%%%%%%%%%%%%%%%%%%%%%%%%%%%%%%%%
\section{Adding marked points to build new family}
\label{section6}

In this section we describe the procedure of adding marked points to build bigger family.
Let $f_{k} : C_{k} \rightarrow X$ be a sequence of harmonic maps with uniformly bounded energy where $C_{k}$ are smooth $(g,n)$ curves.
By Lemma \ref{loc conv}, $f_{k}$ converges to $f_{0} : C_{0} \rightarrow X$ off the singular set $S = Q \cup N$ where $Q$ is a set of smooth bubble points and $N$ is a set of nodal bubble points.
Denote $\overline{N} \subset N$ be a set of regular nodes.
Note that over smooth bubble point, energy concentration is at least $\varepsilon'_{0}$ and over regular nodal bubble point, energy concentration is at least $\varepsilon''_{0}$.
Together with $\varepsilon_{0}$ which comes from $\varepsilon$-regularity, we set
\begin{equation}
\bar{\varepsilon} = \frac{1}{2}\min{(\varepsilon_{0}, \varepsilon'_{0}, \varepsilon''_{0})}.
\end{equation}

Now we develop a framework of building bigger family.

\begin{lemma}\label{new family}
Fix a deformation $\pi : \mathcal{C} \rightarrow B$ and suppose $C_{k} = \pi^{-1}(b_{k})$, $C_{0} = \pi^{-1}(0)$ with $b_{k} \rightarrow 0$.
Suppose for each $k$ that
\begin{enumerate}
\item[{\sc {case 1}}]
For $p \in C_{0}$ a smooth unmarked point, there are two unmarked distinct points $q_{k},r_{k} \in C_{k}$ such that
\begin{equation*}
p = \lim_{k \rightarrow \infty}q_{k} = \lim_{k \rightarrow \infty}r_{k}.
\end{equation*}
\item[{\sc {case 2}}]
For $p \in C_{0}$ a nodal point, there is an unmarked point $r_{k} \in C_{k}$ such that
\begin{equation*}
p = \lim_{k \rightarrow \infty}r_{k}.
\end{equation*}
\end{enumerate}
Denote $C'_{k} = (C_{k},q_{k},r_{k})$ for Case 1, or $C'_{k} = (C_{k},r_{k})$ for Case 2.

Then there is a subsequence $n_{k}$, a nodal $(g,n+i)$ marked curve $C'_{0}$ where $i=1$ for Case 1 and $i=2$ for Case 2, a deformation $\pi' : \mathcal{C}' \rightarrow B'$ of $C'_{0}$ such that $C'_{n_k} \rightarrow C'_{0}$ in the family $\mathcal{C}'$, and forgetful map
\begin{equation} \label{forgetful map}
\begin{tikzcd}
\mathcal{C}' \arrow[r,"\Phi"] \arrow[d,"\pi'"] & \mathcal{C} \arrow[d,"\pi"]\\
B' \arrow[r] & B
\end{tikzcd}
\end{equation}
such that $\Phi(C'_{n_{k}}) = C_{n_{k}}$ and $\Phi(C'_{0}) = C_{0}$.
Here $\mathcal{C}'$ comes with two additional sections $\sigma$ and $\tau$ corresponding to $q_{k}$ and $r_{k}$ for Case 1, and with one additional section $\tau$ corresponding to $r_{k}$ for Case 2.

Furthermore, the restriction of $\Phi$ on $C'_{0}$ is the map collapsing a rational curve $E$, and is biholomorphic on $C'_{0} \setminus E$.
In Case 1, $E$ has two marked points $\sigma(0),\tau(0) \in E$ and one node, and in Case 2, $E$ has one marked point $\tau(0) \in E$ and two nodes.
\end{lemma}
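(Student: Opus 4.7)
The plan is to construct $\mathcal{C}'$ as a local Kuranishi-type deformation of the stable limit $C'_{0}$ and to produce the forgetful map $\Phi$ from the universal property of that deformation together with the standard moduli-level forgetful morphism.

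First I would discard finitely many $k$ so that the added points $q_{k}, r_{k}$ (resp.\ $r_{k}$) are distinct from one another and from every existing marked section; then each $C'_{k}$ is a stable nodal marked curve of genus $g$ with $n+2$ marked points in Case 1 and $n+1$ marked points in Case 2. By properness of the corresponding Deligne--Mumford moduli space I extract a subsequence $n_{k}$ with $[C'_{n_{k}}] \to [C'_{0}]$ for some stable marked curve $C'_{0}$. The forgetful map $\Phi_{\ast}$ (forgetting the added marked points) is continuous and sends $[C'_{k}] \mapsto [C_{k}]$, so by continuity $\Phi_{\ast}([C'_{0}]) = [C_{0}]$.

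Second I would identify the structure of $C'_{0}$ by tracking where the added marked points must go in the stable limit. In Case 1 the distinct points $q_{k}, r_{k}$ both converge to the smooth point $p \in C_{0}$, but a stable marked curve cannot carry two marked points at the same location; the only stable configuration mapping to $[C_{0}]$ under $\Phi_{\ast}$ is therefore $C_{0}$ with a rational bubble $E$ attached at $p$ carrying both limit marked points $\sigma(0), \tau(0)$, giving three special points on $E$. In Case 2 the point $r_{k}$ approaches the nodal point $p$; the stable limit must then resolve $p$ by inserting a rational bubble $E$ joined to the two branches at $p$ and carrying $\tau(0)$, again with three special points on $E$. This produces exactly the configurations claimed by the lemma.

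Third I would realize a neighborhood of $[C'_{0}]$ in the Deligne--Mumford moduli space by a Kuranishi deformation $\pi' : \mathcal{C}' \to (B', 0)$ of $C'_{0}$ as in Section \ref{section2}. The convergence $[C'_{n_{k}}] \to [C'_{0}]$ supplies parameters $b'_{k} \to 0$ and isomorphisms $\varphi'_{k} : C'_{n_{k}} \to (\pi')^{-1}(b'_{k})$; the marked-section structure of $\pi'$ provides the additional sections $\sigma, \tau$ (resp.\ $\tau$). To produce $\Phi$ I would apply the moduli-level forgetful morphism fiberwise: forgetting the added marked sections of $\pi'$ and contracting the unique unstable rational component in each fiber defines a nodal family of $(g, n)$ curves over $B'$, which by the universal property of the Kuranishi deformation of $C_{0}$ is induced from $\pi$ by a holomorphic map $B' \to B$ together with a holomorphic map of total spaces $\Phi : \mathcal{C}' \to \mathcal{C}$ making diagram \eqref{forgetful map} commute. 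On each fiber, and in particular on $C'_{0}$, this $\Phi$ collapses $E$ to a point and is biholomorphic on the complement.

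The step I expect to be the main obstacle is this last one: promoting the moduli-level forgetful map to a genuine morphism of families with the required compatibility $\Phi(C'_{n_{k}}) = C_{n_{k}}$. This relies on the explicit description of the $\mathbb{P}^{1}$-contraction in local nodal charts (cf.\ \cite{RS}, \cite{ACG}), so that $\Phi$ extends holomorphically across the newly introduced nodes, together with possibly passing to a further subsequence and shrinking $B', B$ so that the induced map $B' \to B$ is compatible with the chosen isomorphisms $\varphi_{k}$ of $\pi$.
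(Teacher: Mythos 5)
Your proposal is correct and follows essentially the same route as the paper: extract a convergent subsequence in $\overline{\mathcal{M}}_{g,n+i}$ by compactness, take the Kuranishi family of the limit $C'_{0}$ with its extra sections, and obtain $\Phi$ as the forgetful/stabilization map of families, with the structure of the collapsed rational curve $E$ taken from the standard references (\cite{ACG}, \cite{RS}). The additional details you supply (continuity of the moduli-level forgetful map forcing $\Phi_{*}[C'_{0}]=[C_{0}]$, the stability argument pinning down the bubble configuration, and the universal-property construction of $\Phi$) are points the paper handles by citation, so your write-up is a slightly more explicit version of the same argument.
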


\begin{proof}
Since the Deligne-Mumford moduli space $\overline{\mathcal{M}}_{g,n+i}$ is compact, there is a subsequence $n_{k}$ such that $[C'_{n_{k}}] \rightarrow [C'_{0}]$ for some $(g,n+i)$ curve $C'_{0}$.
Let
\begin{equation}\label{new family eq}
\begin{tikzcd}
\mathcal{C}' \arrow[d,"\pi'"]\\
B'
\end{tikzcd}
\end{equation}
be a Kuranishi family of $C'_{0} = (\pi')^{-1}(0)$.
For Case 1, this family comes with two sections $\sigma$ and $\tau$ corresponding to the last two marked points, and also with a forgetful map $\Phi$ as in \eqref{forgetful map}, which forgets $\sigma$ and $\tau$ and collapses unstable components.
For Case 2, this family comes with one section $\tau$ corresponding to the last marked point, and also with a forgetful map $\Phi$ as in \eqref{forgetful map}, which forgets $\tau$ and collapses unstable components.
Then we can choose $b'_{k} \rightarrow 0$ in $B'$ and for each $k$, an identification of $C'_{n_{k}}$ with a fiber $(\pi')^{-1}(b'_{k})$ of $\mathcal{C}'$ such that $q_{k} = \sigma(b'_{k})$ and $r_{k} = \tau(b'_{k})$ for Case 1, and $r_{k} = \tau(b'_{k})$ for Case 2.

In any case, the restriction of $\Phi$ on $C'_{0}$ is the map collapsing a rational curve $E$, and is biholomorphic on $C'_{0} \setminus E$.
Note that in Case 1, $E$ has two marked points $\sigma(0),\tau(0) \in E$ and one node, and in Case 2, $E$ has one marked point $\tau(0) \in E$ and two nodes.
(For details, see \cite{ACG} section 10.6 and 10.8)
\end{proof}

\begin{lemma}\label{new node regular}
Fix a family $\pi : \mathcal{C} \rightarrow B$ and suppose $C_{k} \rightarrow C_{0}$ in $\mathcal{C}$ and $p \in C_{0}$ is a regular node.
Also suppose that there is bigger family $\mathcal{C}'$ of $(g,n')$ curves with $n<n'$ and forgetful map $\Phi : \mathcal{C}' \rightarrow \mathcal{C}$.
Fix $C'_{0}$ be a fiber of $\mathcal{C}'$ such that $\Phi(C'_{0}) = C_{0}$.

Then any node $q \in C'_{0}$ with $\Phi(q) = p$ is regular.
\end{lemma}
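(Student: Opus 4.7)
The plan is to construct the required forgetful map for $q$ by composing the two forgetful maps already in hand. By hypothesis, $p \in C_0$ is a regular node in the family $\mathcal{C}$, so Definition \ref{reg node def} furnishes a family $\overline{\pi} : \overline{\mathcal{C}} \to \overline{B}$ of $(g,l)$ curves with $l < n$ and a forgetful map $\Psi : \mathcal{C} \to \overline{\mathcal{C}}$ such that $\overline{p} := \Psi(p)$ is a regular point of $\overline{\mathcal{C}}$. We are also given the forgetful map $\Phi : \mathcal{C}' \to \mathcal{C}$ between families of $(g,n')$ and $(g,n)$ curves.

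The natural candidate is simply $\Psi \circ \Phi : \mathcal{C}' \to \overline{\mathcal{C}}$. I would verify three things in order. First, the composition is again a forgetful map between families, i.e.\ it corresponds to forgetting the $n' - l$ extra marked sections and collapsing any components rendered unstable (this follows because the composition of two such operations is again such an operation, and by functoriality of the forgetful maps between Deligne--Mumford spaces from which $\mathcal{C}'$, $\mathcal{C}$, $\overline{\mathcal{C}}$ are built as Kuranishi families). Second, $l < n < n'$, so the target family has strictly fewer marked points than the source, as required by Definition \ref{reg node def}. Third, tracking the point $q$ through the composition gives $q \mapsto \Phi(q) = p \mapsto \Psi(p) = \overline{p}$, which is a regular point of $\overline{\mathcal{C}}$ by the choice of $\Psi$. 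Applying Definition \ref{reg node def} to $q$, with witness family $\overline{\mathcal{C}}$ and forgetful map $\Psi \circ \Phi$, concludes that $q$ is a regular node.

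The only nontrivial bookkeeping is the first item: making precise that $\Psi \circ \Phi$ fits the informal description of ``forgetful map'' used in the definition of a regular node. This is essentially a compatibility statement for the base maps $B' \to B \to \overline{B}$ and the total space maps, and it reduces to the standard fact that the forgetful morphisms $\overline{\mathcal{M}}_{g,n+1} \to \overline{\mathcal{M}}_{g,n}$ compose to give the forgetful morphisms $\overline{\mathcal{M}}_{g,n+k} \to \overline{\mathcal{M}}_{g,n}$ (see \cite{ACG}). I expect this to be the main — and essentially only — obstacle, and it is a formal consequence of the construction of the families via Kuranishi models already invoked in the proof of Lemma \ref{new family}. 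No analysis of the maps $f_k$ is needed; the statement is purely about the geometry of the families.
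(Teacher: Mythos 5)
Your proposal is correct and matches the paper's own argument: the paper likewise composes the forgetful map $\Phi : \mathcal{C}' \to \mathcal{C}$ with the map $\overline{\Phi} : \mathcal{C} \to \overline{\mathcal{C}}$ witnessing regularity of $p$, notes that the composition is again a forgetful map to a family of $(g,l)$ curves with $l<n<n'$, and tracks $q \mapsto p \mapsto \overline{p}$ to conclude via Definition \ref{reg node def}. The extra care you take in justifying that the composition is a forgetful map is a reasonable elaboration of a step the paper asserts without comment.
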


\begin{proof}
Since $p$ is regular, there exists a family of $(g,l)$ curves $\overline{\pi} : \overline{\mathcal{C}} \rightarrow \overline{B}$ with $l<n$ and a forgetful map $\overline{\Phi} : \mathcal{C} \rightarrow \overline{\mathcal{C}}$ such that $\overline{p} := \Phi(p)$ is a regular point.
Then composition of forgetful maps $\overline{\Phi} \circ \Phi : \mathcal{C}' \rightarrow \overline{\mathcal{C}}$ is also a forgetful map from a family of $(g,n')$ curves to a family of $(g,l)$ curves.
Moreover, $\overline{\Phi} \circ \Phi (q) = \overline{p}$ is a regular point.
So $q$ is regular.
\end{proof}

Next, we specify where to put marked points near bubble points.
First consider $p \in Q$ be a smooth bubble point with energy concentration $m$.
Choose a neighborhood $U$ of $p$ and, given two distinct points $q,r \in U$, define (simplified) cross ratio $CR_{q,r} : U \rightarrow \mathbb{C}$ by
\begin{equation}
CR_{q,r}(x) = \frac{x-q}{r-q}.
\end{equation}
Note that $CR_{q,r}(q) = 0$ and $CR_{q,r}(r) = 1$.
Given $q_{k},r_{k}$, denote $R_{k} = CR_{q_{k},r_{k}}$.
Let $\mu_{k}$ be the energy density measure on $U$ and $\nu_{k} = (R_{k})_{*}\mu_{k}$ be the push forward measure on $R_{k}(B_{k}) \subset \mathbb{C}$.

\begin{lemma}\label{mark}
(Marking points near smooth bubble point)
Let $p \in Q$ be a smooth bubble point with energy concentration $m$.
Then after passing to a subsequence, there exist unmarked points $q_{k},r_{k} \in C_{k}$ both converging to $p$ such that
\begin{enumerate}
\item \label{cond1}
$C'_{k} = (C_{k},q_{k},r_{k}) \rightarrow C'_{0}$ in bigger family $\mathcal{C}'$ as in Lemma \ref{new family}.
\item \label{cond2}
Denote $f'_{k} = f_{k} \circ \Phi : C'_{k} \rightarrow X$.
Identify $E$ with $\mathbb{C}P^{1}$ by mapping $\sigma(0)$ to $[0:1]$, $\tau(0)$ to $[1:1]$, and the node $p'$ of $E$ to $[1:0]$.
Under the chart $[z:1] \mapsto z$,
\begin{align}
\lim_{k \rightarrow \infty}\int_{E \setminus D} \nu_{k} &= \bar{\varepsilon}, \label{mark-eq1}\\
\int_{D} z \,\nu_{k} &= 0 \label{mark-eq2}
\end{align}
where $D = \{z : \lvert z \rvert < 1\}$ and $\nu_{k} = e(\tilde{f}'_{k})$ are energy density measures on $E$.
\item \label{cond3}
On $E$, $\tilde{f}'_{k}$ converges to $f'_{0}$ in $C^{1}$ away from $\{p'\} \cup \{q_{j}\}_{j=1, \ldots, l'}$ where $q_{j} \in D \subset E$ with energy concentration $m_{j} \geq \varepsilon'_{0}$.
In addition, new node $p'$ is regular and
\begin{equation}
E(f'_{0}\vert_{E}) + \sum_{j=1, \ldots, l'}m_{j} = m.
\end{equation}
\end{enumerate}
\end{lemma}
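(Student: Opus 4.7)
The plan has two parts: first, construct the marking points $(q_k, r_k)$ so that the push-forward energy densities satisfy the normalizations \eqref{mark-eq1}--\eqref{mark-eq2}; second, invoke Lemmas \ref{new family}, \ref{loc conv}, and \ref{gap neck} in succession to extract the convergence and the energy identity.

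For the construction, fix a holomorphic chart $U$ around $p$ identified with a disk in $\mathbb{C}$ and set $\mu_k = e(f_k)$; Lemma \ref{loc conv} gives $\mu_k \to e(f_0) + m\delta_p$ weakly on $U$. Since $m \geq \varepsilon'_0 > \bar{\varepsilon}$, for each $q$ near $p$ and all large $k$ there is a unique radius $\lambda_k(q) > 0$ with $\mu_k(B(q, \lambda_k(q))) = m - \bar{\varepsilon}$; absolute continuity of $\mu_k$ makes $\lambda_k(\cdot)$ continuous, and concentration forces $\lambda_k(q) \to 0$. I would then define the center-of-mass map
\[
\Psi_k(q) := \frac{1}{m - \bar{\varepsilon}} \int_{B(q, \lambda_k(q))} x \, d\mu_k(x),
\]
which, for $k$ large, is a continuous self-map of some fixed small closed disk about $p$; Brouwer's theorem then supplies a fixed point $q_k$. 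Setting $\lambda_k := \lambda_k(q_k)$ and $r_k := q_k + \lambda_k$, perturbed slightly if needed to keep them disjoint from the existing $n$ marked points, I get $q_k, r_k \to p$, and the cross-ratio $R_k$ sends $q_k \mapsto 0$, $r_k \mapsto 1$, and identifies $B(q_k, \lambda_k)$ with $D$. The scale condition together with $\mu_k(U) \to m$ yields \eqref{mark-eq1}; the fixed-point equation $\Psi_k(q_k) = q_k$ yields \eqref{mark-eq2}.

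Next, Case 1 of Lemma \ref{new family} applied to $(q_k, r_k)$ produces the enlarged family $\pi' : \mathcal{C}' \to B'$ with central fiber $C'_0 = C_0 \cup E$ and forgetful map $\Phi$ collapsing $E$, giving the first conclusion. Since $\Phi$ is holomorphic, $f'_k := f_k \circ \Phi$ remains harmonic with the same uniform energy bound, so Lemma \ref{loc conv} applied to $\{f'_k\}$ on $\mathcal{C}'$ and restricted to $E$ produces a harmonic limit $f'_0|_E$, finitely many bubble points $\{q_1, \ldots, q_{l'}\}$ with concentrations $m_j \geq \varepsilon'_0$, and $C^1$ convergence of $\tilde f'_k$ to $f'_0$ off $\{p'\} \cup \{q_j\}$, where $p'$ is the unique node of $E$. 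Because the $\nu_k$-mass of $E \setminus D$ tends to $\bar{\varepsilon} < \varepsilon'_0$, no bubble point can lie outside $D$, so all $q_j \in \overline{D}$.

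The regularity of $p'$ is immediate from Definition \ref{reg node def}, since $\Phi : \mathcal{C}' \to \mathcal{C}$ is a forgetful map sending $p'$ to the smooth point $p \in C_0$. By \eqref{mark-eq1}, for $k$ large the $\tilde f'_k$-energy in a small neighborhood of $p'$ on $E$ is bounded by $2\bar{\varepsilon} < \varepsilon''_0$, so Lemma \ref{gap neck} gives the zero neck property at $p'$. Combining the $C^1$ convergence on compact subsets of the regular part of $E$, the measure convergence \eqref{meas conv} at each $q_j$, and zero energy loss at $p'$ yields
\[
E(f'_0|_E) + \sum_{j=1}^{l'} m_j \;=\; \lim_{k \to \infty} \nu_k(E) \;=\; m.
\]
The main obstacle is the Brouwer fixed-point step, where one must verify uniformly in $k$ that $\Psi_k$ is well-defined, continuous, and self-maps the chosen disk about $p$; this reduces to quantitative control on $\lambda_k(q)$ coming from the weak concentration $\mu_k \to m\delta_p + e(f_0)$.
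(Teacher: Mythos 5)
Your construction idea is essentially the paper's: a fixed point of your center-of-mass map $\Psi_k$ is exactly a zero of the function $F(q)=\int_{D} z\,d\nu_{q}(z)$ that the paper introduces (Definition \ref{F def}), and the paper finds such a zero by a winding-number estimate on a boundary circle (Lemma \ref{angle}, Proposition \ref{zero}) rather than by Brouwer — these are interchangeable degree arguments. The second half of your proposal (Lemma \ref{new family} for (1), Lemma \ref{loc conv} applied on $E$, regularity of $p'$ because $\Phi$ sends it to the smooth point $p$, and Lemma \ref{gap neck} to rule out energy concentration $m_{\infty}$ at $p'$, yielding $E(f'_{0}|_{E})+\sum_j m_j=m$) also follows the paper's route.

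However, there is a genuine gap in the construction step, and it is exactly the part you defer: the whole selection must be carried out at scales shrinking with $k$, whereas you work on a fixed chart $U$ and a fixed closed disk about $p$. Concretely: (i) your claim $\mu_k(U)\to m$ is false — $\mu_k(U)\to E(f_0,U)+m$ — so with the normalization $\mu_k(B(q_k,\lambda_k))=m-\bar{\varepsilon}$ on a fixed chart the mass escaping $D$ tends to $\bar{\varepsilon}+E(f_0,U)$, not $\bar{\varepsilon}$, and \eqref{mark-eq1} fails; (ii) Brouwer on a fixed disk only places the fixed point $q_k$ somewhere in that disk, so the convergence $q_k,r_k\to p$ required by Lemma \ref{new family} does not follow; (iii) your assertion that ``concentration forces $\lambda_k(q)\to 0$'' is false for fixed $q\neq p$ (one gets $\lambda_k(q)\to|q-p|$), which is a symptom of the same missing scale control. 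The paper handles all of this by the diagonal choice of nested shrinking balls $B_k=B(0,\delta_k)$ with $\mu_\infty(B_k)\le\varepsilon_k\to 0$ and the quantitative conditions of Assumption \ref{assume mu} (see \eqref{subseq1}--\eqref{subseq2}), by restricting the measure to $B_k$ before normalizing (Lemma \ref{out measure1}), by locating the zero of $F$ inside the smaller ball $B_{2k-1}$ so that $q_k,r_k\to p$ (Proposition \ref{zero}, Lemma \ref{conv to C}), and by the boundary estimate of Lemma \ref{angle}, which is precisely the self-map/degree verification you label ``the main obstacle'' and leave unproved. Without the shrinking-scale setup and that estimate, the marked points with properties \eqref{mark-eq1}--\eqref{mark-eq2} have not actually been produced.
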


Here the map $R_{k}$ acts as coordinate change from $U$ to $E$.
From the choice of $q_k,r_k$, the push-forward measure $\nu_{k}$ is such that
\begin{enumerate}
\item
essentially all of the mass $m$ is captured by $\nu_{k}$ over $E$,
\item
all but $\bar{\varepsilon}$ of that mass lies outside the unit disk $D$, and
\item
the center of mass of $\nu_{k}$ over $D$ is at the origin.
\end{enumerate}
The proof of this lemma is technical and will be in the appendix.

\bigskip

Now consider $p \in \overline{N}$ be a regular nodal bubble point with energy concentration $m$.
Nodal chart of $p$ can be written as $ B(p,\delta) \cap C_{k} = \{x,y \in \mathbb{C}^{2} : xy = t_{k}, \lvert x \rvert, \lvert y \rvert \leq \delta\}$.
Let $\pi_1$ be the projection to the first factor, given by $\pi_{1} (x,y) = x$, and denote $A_{k,\delta} := \pi_{1}(B(p,\delta) \cap C_{k}) = B_{\delta} \setminus B_{t_{k}/\delta} \subset \mathbb{C}$ where $B_{\delta}$ is a ball of radius $\delta$ centered at origin in the complex plane.
Define extended push forward energy density measure $\mu_{k}$ over $U := B_{\delta}$ by $\mu_{k} = (\pi_{1})_{*}e(f_{k})$ on $A_{k,\delta}$ and $\mu_{k} = 0$ on $B_{t_{k}/\delta}$.
Then $\mu_{k} \rightarrow \mu_{\infty} + (m + E_{\delta}) \delta_{p}$ where $\mu_{\infty} = e(f_{0})$ on $B_{\delta}$, $E_{\delta} = E(f_{0},\left(B(p,\delta) \cap C_0 \right)|_{\{x=0\}})$ and $\delta_{0}$ is Dirac-delta measure centered at $p$ and $m \geq 2\bar{\varepsilon}$.
By choosing $\delta$ small, we can make $E_{\delta}$ as small as we want.
Therefore, without loss of generality, we just denote $m$ instead of $m+E_{\delta}$ and consider
\begin{equation*}
\mu_{k} \rightarrow \mu_{\infty} + m \delta_{p}.
\end{equation*}

Given $r_{k}$, denote $R_{k} = CR_{p,r_{k}}$ and $\nu_{k} = (R_{k})_{*}\mu_{k}$ as above.

\begin{lemma}\label{mark2}
(Marking points near regular nodal bubble point)
Let $p \in \overline{N}$ be a regular nodal bubble point with energy concentration $m$.
Then after passing to a subsequence, there exist unmarked points $r_{k} \in C_{k}$ converging to $p$ such that
\begin{enumerate}
\item \label{cond1'}
$C'_{k} = (C_{k},r_{k}) \rightarrow C'_{0}$ in bigger family $\mathcal{C}'$ as in Lemma \ref{new family}.
\item \label{cond2'}
Denote $f'_{k} = f_{k} \circ \Phi : C'_{k} \rightarrow X$.
Identify $E$ with $\mathbb{C}P^{1}$ by mapping one node $p_{2}$ of $E$ to $[0:1]$, $\tau(0)$ to $[1:1]$, and the other node $p_{1}$ of $E$ to $[1:0]$.
Under the chart $[z:1] \mapsto z$,
\begin{equation} \label{mark2-eq}
\lim_{k \rightarrow \infty}\int_{E \setminus D} \nu_{k} = \bar{\varepsilon}
\end{equation}
where $D = \{z : \lvert z \rvert < 1\}$ and $\nu_{k} = e(\tilde{f}'_{k})$ are energy density measures on $E$.
\item \label{cond3'}
On $E$, $\tilde{f}'_{k}$ converges to $f'_{0}$ in $C^{1}$ away from $\{p_{1}\} \cup \{p_{2}\} \cup \{q_{j}\}_{j=1, \ldots, l'}$ where $q_{j} \in D \subset E$ with $q_{j} \neq p_{2}$ and with energy concentration $m_{j} \geq \varepsilon'_{0}$.
In addition, new nodes $p_{1},p_{2}$ are regular and
\begin{equation}
E(f'_{0}\vert_{E}) + \sum_{j=1, \ldots, l'}m_{j} + m_{0} = m
\end{equation}
where $m_{0}$ is the energy concentration at $p_{2} \in E$ and $m_{0} \geq \varepsilon''_{0}$.
\end{enumerate}
\end{lemma}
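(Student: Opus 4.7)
The plan parallels Lemma \ref{mark} but is simpler: only one marked point is needed, and only the mass condition \eqref{mark2-eq} has to be arranged (there is no center-of-mass condition analogous to \eqref{mark-eq2}). Working in the nodal chart with $p=0$, for each sufficiently large $k$ the function $\rho \mapsto \mu_k(\{|x|>\rho\})$ is continuous and non-increasing on $[t_k/\delta, \delta]$, decreasing from $\mu_k(A_{k,\delta}) \to m$ down to $0$. Since $m \geq 2\bar{\varepsilon}$, the intermediate value theorem yields $\rho_k \in (t_k/\delta, \delta)$ with $\mu_k(\{|x|>\rho_k\}) = \bar{\varepsilon}$; set $r_k = (\rho_k, t_k/\rho_k) \in C_k$. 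A short argument using the weak convergence $\mu_k \to \mu_\infty + m\delta_0$ and the absence of atoms of $\mu_\infty$ away from $p$ shows $\rho_k \to 0$ (otherwise a subsequence with $\rho_k \geq \rho_*$ would give $\bar{\varepsilon} \leq \mu_\infty(\{|x|\geq \rho_*/2\})$, which is made $<\bar{\varepsilon}$ by shrinking $\delta$).

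\textbf{Family, forgetful map, and regularity of the new nodes.} Apply Lemma \ref{new family} Case 2 to $C'_k = (C_k, r_k)$: after passing to a subsequence, $C'_k \to C'_0$ in a bigger family $\mathcal{C}'$ together with a forgetful map $\Phi : \mathcal{C}' \to \mathcal{C}$, and $C'_0$ contains an exceptional component $E \cong \mathbb{P}^1$ that $\Phi$ collapses to $p$. This $E$ carries the section $\tau$ and two nodes, one on each branch of the original node. Label $E$ so that $\tau(0) \mapsto [1:1]$, the node on the $x$-branch becomes $p_2 \mapsto [0:1]$, and the node on the $y$-branch becomes $p_1 \mapsto [1:0]$; this identification is forced by expressing the rescaling $R_k(x) = x/\rho_k$ in family coordinates. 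Both $p_1$ and $p_2$ lie over the regular node $p$, so Lemma \ref{new node regular} ensures that both are themselves regular.

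\textbf{Mass condition, local convergence, and energy identity.} The mass condition is immediate from the construction: $\nu_k(E\setminus D) = (R_k)_*\mu_k(\{|z|>1\}) = \mu_k(\{|x|>\rho_k\}) = \bar{\varepsilon}$, so $\lim_{k\to\infty}\int_{E\setminus D} \nu_k = \bar{\varepsilon}$. Applying Lemma \ref{loc conv} to the harmonic maps $\tilde{f}'_k$ on $E$ (after refining the subsequence) yields $C^1$-convergence to a harmonic limit $f'_0$ off the two nodes $p_1, p_2$ and finitely many smooth bubble points $\{q_j\}$ with $m_j \geq \varepsilon'_0$. None of the $q_j$ can lie in $E \setminus \bar{D}$, since the total mass there is only $\bar{\varepsilon} < \varepsilon'_0$. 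The total mass on $E$ equals $m$ (absorbing $E_\delta$ into $m$ as in the text), and splits as $E(f'_0|_E) + \sum_j m_j + m_0 + m_1$, where $m_0, m_1$ are the concentrations at $p_2, p_1$. By Proposition \ref{gap2} combined with Lemma \ref{reg node}, the concentration at a regular node is either $0$ or at least $\varepsilon''_0$; since $m_1 \leq \bar{\varepsilon} < \varepsilon''_0$ we get $m_1 = 0$, and the same quantization forces $m_0 \geq \varepsilon''_0$ once $m_0 > 0$ has been established.

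\textbf{Main obstacle.} The delicate step is the final one: ruling out $m_0 = 0$, i.e., showing that the chosen $r_k$ really captures a genuine bubble at $p_2$ and that the inner mass $m - \bar{\varepsilon}$ is not entirely absorbed into $E(f'_0|_{E \cap D})$ together with finitely many smooth bubbles $q_j$. This is the nodal analogue of the technical core of Lemma \ref{mark} and relies on the regular-node energy gap together with a careful neck analysis of the rescaled maps $\tilde{f}'_k(z) = f_k(z\rho_k, t_k/(z\rho_k))$ near $p_2$. I expect this argument, like that of Lemma \ref{mark}, to be deferred to the appendix.
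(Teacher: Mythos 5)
Your construction is essentially the paper's: the point $r_{k}$ is chosen exactly as in Lemma \ref{out measure1} with $q=p$ (the paper's cross ratio $R_{k}(x)=x/r_{k}$ is your rescaling), the bigger family and the identification of $E$ come from Lemma \ref{new family} Case 2, regularity of $p_{1},p_{2}$ from Lemma \ref{new node regular}, the convergence and energy splitting on $E$ from Lemma \ref{loc conv}, and the vanishing of the concentration at $p_{1}$ from \eqref{mark2-eq} together with Lemma \ref{gap neck}, which is precisely how the paper argues (the paper just spells out the chart containments $K_{1}\subset B(p_{1},\delta')\cap C'_{k}$ that justify your inequality $m_{1}\leq\bar{\varepsilon}$). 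One step you skipped is genuinely needed: showing that the marked point $(\rho_{k},t_{k}/\rho_{k})$ converges to the node requires $t_{k}/\rho_{k}\rightarrow 0$, not only $\rho_{k}\rightarrow 0$; otherwise $r_{k}$ limits to a regular point on the $\{x=0\}$ branch and Lemma \ref{new family} Case 2 does not apply. The paper checks this by contradiction: if $\lvert t_{k}/\rho_{k}\rvert\geq c>0$, then since the extended measure $\mu_{k}$ vanishes on $B_{t_{k}/\delta}$, one gets $\mu_{k}=0$ on $R_{k}^{-1}(D)$, so the normalization would force $\mu_{k}(B_{k})=\bar{\varepsilon}$, contradicting $\mu_{k}(B_{k})\rightarrow m\geq 2\bar{\varepsilon}$. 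You should add this short argument.

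Your ``main obstacle,'' however, is misdiagnosed: the paper never rules out $m_{0}=0$, and no such argument is deferred to the appendix. The paper's proof only establishes the dichotomy that $m_{0}$ and the concentration $m_{\infty}$ at $p_{1}$ are each either zero or at least $\varepsilon''_{0}$ (regularity of the new nodes plus Lemma \ref{gap neck}), and then kills $m_{\infty}$; the statement's ``$m_{0}\geq\varepsilon''_{0}$'' is used in Section \ref{section7} only through this dichotomy, where the case $m_{0}=0$ is explicitly allowed (it is the case $n'=n-1$ in the residual energy count). Indeed $m_{0}=0$ can occur, for instance when the concentrating energy forms a smooth bubble at scale $\rho_{k}$ centered away from the node, which after rescaling appears as one of the $q_{j}\in D$ rather than as concentration at $p_{2}$. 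So rather than searching for a hard argument forcing $m_{0}>0$, you should state the conclusion as the quantization dichotomy at $p_{2}$ together with $m_{\infty}=0$ at $p_{1}$; with that reading, and with the $t_{k}/\rho_{k}\rightarrow 0$ check added, your proposal already contains everything the paper's proof establishes.
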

The proof of this lemma will be also in the appendix.

%%%%%%%%%%%%%%%%%%%%%%%%%%%%%%%%%%%%%%%%%%%%%%%%%%%%%%%%%%%%
%%%%%%%%%%%%%%%%%  Section 7  %%%%%%%%%%%%%%%%%%%%%%%%%%%%%%
%%%%%%%%%%%%%%%%%%%%%%%%%%%%%%%%%%%%%%%%%%%%%%%%%%%%%%%%%%%%%%
\section{Completion of the proof - Induction}
\label{section7}

In this section we prove Main Theorem \ref{main theorem} and its special cases, Corollaries \ref{main2}, \ref{fixed domain} and \ref{sphere}.
First we define the residual energy.

\begin{definition} \label{RE}
Suppose $f_{k} : C_{k} \rightarrow X$ be a sequence of maps with uniformly bounded energy that converges to $f_{0} : C_{0} \rightarrow X$ off the set $S = Q \cup N$, where $Q = \{p_{1}, \ldots, p_{l}\}$ is a set of smooth bubble points and $N$ is a set of nodal bubble points.
Let $\overline{N} = \{q_{1}, \ldots, q_{n}\}$ be a set of regular nodes that energy concentrates.

Define residual energy, denoted by $RE$, by
\begin{equation}
RE = \lim_{k \rightarrow \infty}E(f_{k}) - E(f_{0}) - l \bar{\varepsilon} - n \bar{\varepsilon}/2.
\end{equation}
If we denote $m_{i}$ be energy concentration at $p_{i}$ and $m'_{j}$ be energy concentration at $q_{j}$, then since $m_{i}, m'_{j} \geq 2\bar{\varepsilon}$, we have
\begin{equation*}
RE = \sum_{i=1}^{l}{(m_{i} - \bar{\varepsilon})} + \sum_{j=1}^{n} {(m'_{j} - \bar{\varepsilon}/2)} \geq l \bar{\varepsilon} + 3 n \bar{\varepsilon}/2.
\end{equation*}
\end{definition}

\begin{proof}
(Proof of Theorem \ref{main theorem})
If there is a subsequence $n_{k}$ and a finite set of points $P_{k}$ on $C_{k}$ such that $C'_{n_{k}} = (C_{n_{k}},P_{n_{k}}) \rightarrow C'_{0}$ for some $C'_{0}$ and corresponding residual energy $RE = 0$, then there is no energy concentration points except non-regular nodes and we are done.

Now suppose $RE > 0$ for any subsequence $n_{k}$ and any set of marking points $P_{k}$ such that $C'_{n_{k}}$ converges.
That means, energy concentration occurs at either $p \in Q$ or at $p \in \overline{N}$.

{\bf Case 1:}
There is energy concentration $m$ at $p \in Q$.

By Lemma \ref{mark}, after passing to a subsequence, we can add two marked points $q_{k},r_{k} \in C'_{k}$ such that $E(f'_{0}\vert_{E}) + \sum_{j=1}^{l'}m_{j} = m$, where $m_{j} \geq 2\bar{\varepsilon}$.

In the new family,
\begin{equation*}
RE' = \lim_{k \rightarrow \infty}E(f'_{k}) - E(f'_{0}) - (l-1+l') \bar{\varepsilon} - n \bar{\varepsilon}/2.
\end{equation*}
Note that $n$ does not change because the new node is a regular node with no energy concentration.
Then the difference of new residual energy from old one is
\begin{equation*}
RE' - RE = - (l'-1) \bar{\varepsilon} - E(f'_{0}\vert_{E}).
\end{equation*}

If $E(f'_{0}\vert_{E}) > 0$, then since $E(f'_{0}\vert_{E}) \geq 2\bar{\varepsilon}$, $RE' \leq RE - \bar{\varepsilon}$.
If $E(f'_{0}\vert_{E}) = 0$ and $l' \geq 2$, then $RE' \leq RE - \bar{\varepsilon}$.
Finally, if $E(f'_{0}\vert_{E}) = 0$ and $l' \leq 1$, we know $l'=1$ because of the energy identity $E(f'_{0}\vert_{E}) + \sum_{j=1}^{l'}m_{j} = m$.
Note that from \eqref{mark-eq2}, the location of the bubble on $E$ is $[0:1]$.
But then \eqref{mark-eq1} implies that energy of amount of $\bar{\varepsilon}$ on a subset of $E \setminus D$ can not be used for the bubble, hence $E(f'_{0}\vert_{E}) \geq \bar{\varepsilon}$.
This contradicts to the assumption $E(f'_{0}\vert_{E}) = 0$, so this case is impossible.

Hence, in any case, $RE' \leq RE - \bar{\varepsilon}$.

\smallskip

{\bf Case 2:}
There is energy concentration $m$ at a regular node $p \in \overline{N}$.

By Lemma \ref{mark2}, we can add one marked point $r_{k} \in C'_{n_{k}}$ and a subsequence $n'_{k}$ of $n_{k}$ such that $E(f'_{0}\vert_{E}) + \sum_{j=1}^{l'}m_{j} + m_{\infty} = m$, where $\nu_{0} = e(f'_{0})$, $m_{j}\geq 2\bar{\varepsilon}$, and $m_{\infty}$ is either zero or at least $2\bar{\varepsilon}$.

In the new family,
\begin{equation*}
RE' = \lim_{k \rightarrow \infty}E(f'_{k}) - E(f'_{0}) - (l+l') \bar{\varepsilon} - n' \bar{\varepsilon}/2
\end{equation*}
where $n'$ is the number of new regular nodes that energy concentrates.
Note that $n'=n$ if $m_{\infty} \neq 0$ or $n'=n-1$ if $m_{\infty} = 0$, so the difference of new residual energy from old one is
\begin{equation*}
RE' - RE \leq - l' \bar{\varepsilon} + \bar{\varepsilon}/2 - E(f'_{0}\vert_{E}).
\end{equation*}

If $E(f'_{0}\vert_{E}) > 0$, then since $E(f'_{0}\vert_{E}) \geq 2\bar{\varepsilon}$, $RE' \leq RE - \bar{\varepsilon}$.
If $E(f'_{0}\vert_{E}) = 0$ and $l' \geq 1$, then $RE' \leq RE - \bar{\varepsilon}/2$.
Finally, if $E(f'_{0}\vert_{E}) = 0$ and $l' = 0$, then $m_{\infty} =  m$ and all energy concentrates at $[0:1]$.
But from \eqref{mark2-eq}, energy of amount of $\bar{\varepsilon}$ on a subset of $E \setminus D$ can not concentrate at $[0:1]$, which contradicts to $E(f'_{0}\vert_{E}) = 0$.
So this case is impossible.

Hence, in any case, $RE' \leq RE - \bar{\varepsilon}/2$.

\smallskip

In conclusion, if $RE>0$, we mark either two points near a bubble point or one point near regular nodal point and make $RE' \leq RE-\bar{\varepsilon}/2$.
Since $RE$ is finite, this process should stop when energy concentrates only at non-regular nodes.
This proves the theorem.
\end{proof}

\begin{proof}
(Proof of Corollary \ref{main2})
The only thing we need to show is that all nodes of $C'_{0}$ are regular.
Pick $p'$ be a node in $C'_{0}$.
Since forgetful map $\Phi : \mathcal{C}' \rightarrow \mathcal{C}$ maps $C'_{0}$ to $C_{0}$, $\Phi(p') \in C_{0}$ is either regular point or nodal point.
If $\Phi(p')$ is regular point, $p'$ is regular node by definition.
If $\Phi(p')$ is nodal point, by assumption, it is regular nodal point.
So by Lemma \ref{new node regular}, $p'$ is regular node.
Hence we can apply Theorem \ref{main theorem} and conclusion follows from the fact that the singular set $S$ is empty.
\end{proof}

\begin{proof}
(Proof of Corollary \ref{fixed domain})
Let $C_{k} = \Sigma$, then $C_{0} = \Sigma$ which do not have any node.
The corollary then follows from Corollary \ref{main2}.
\end{proof}

\begin{proof}
(Proof of Corollary \ref{sphere})
Choose a limit $C_{0}$ such that $C_{k} \rightarrow C_{0}$ in a family $\mathcal{C}$ of $(0,n)$ curves with $n \geq 3$.
It is enough to show that all nodes of $C_{0}$ are regular.
Note that if $n=3$, Deligne-Mumford moduli space $\overline{\mathcal{M}}_{0,3}$ is trivial and there is no node in $C_{0}$.

Assume $n \geq 4$.
Pick $p \in C_{0}$ be a node.
Consider a forgetful map $\Phi : \mathcal{C} \rightarrow \overline{\mathcal{C}}$ that forgets $n-3$ marked points and collapse unstable components.
Here $\overline{\mathcal{C}}$ is a family of $(0,3)$ curves, which is again trivial.
So $\Phi(p)$ is regular point and hence $p$ is regular node.
This proves the corollary.
\end{proof}

%%%%%%%%%%%%%%%%%%%%%%%%%%%%%%%%%%%%%%%%%%%%%%%%%%%%%%%%%%%%
%%%%%%%%%%%%%%%%%  Section 8  %%%%%%%%%%%%%%%%%%%%%%%%%%%%%%
%%%%%%%%%%%%%%%%%%%%%%%%%%%%%%%%%%%%%%%%%%%%%%%%%%%%%%%%%%%%%%
\section{Appendix - Marking points lemmas}
\label{section8}

This section describes the proof of Lemmas \ref{mark} and \ref{mark2}.

%%%%%%%%%%%%%%%%%  Subsection 6-1  %%%%%%%%%%%%%%%%%%%%%%%%%%%%%%
\subsection{Local properties}

Assume the convergence of measures
\begin{equation}\label{meas conv2-re}
\mu_{k} \rightarrow \mu_{\infty} + m\delta_{p}
\end{equation}
where $\mu_{k},\mu_{\infty}$ are measures on $U \subset \mathbb{C}$, $\delta_{p}$ is Dirac-delta measure at the origin and $m \geq 2\bar{\varepsilon}$.
Throughout this section, $B(x,r) \subset U$ denotes a ball of radius $r$ in $U$ centered at $x \in U$ and $D \subset \mathbb{C}$ be a unit disk.

\begin{lemma}
There are $\delta_{k}$, $\varepsilon_{k}$, and a subsequence of $\mu_{k}$, still denoted by $\mu_{k}$, such that $\delta_{k}, \varepsilon_{k} \rightarrow 0$ and satisfying the following:
\begin{equation}
\int_{B(0,\delta_{0})} d\mu_{\infty} \leq \varepsilon_{0} = \frac{\bar{\varepsilon}}{4} \qquad \textrm{ and } \qquad \int_{B(0,\delta_{k})} d\mu_{\infty} \leq \varepsilon_{k} \label{subseq1}
\end{equation}
for all $k$.
Moreover, given any $k$, for all $m$ with $1 \leq m \leq 2k$,
\begin{equation}
\left\lvert \int_{B(0,\delta_{m})} d\mu_{k} - d\mu_{\infty} - m \delta_{p} \right\rvert < \varepsilon_{m}. \label{subseq2}
\end{equation}
\end{lemma}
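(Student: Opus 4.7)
My plan is to prove the lemma by a straightforward two-stage construction: first choose $\delta_k$ and $\varepsilon_k$ by hand so that \eqref{subseq1} holds automatically, then extract a diagonal subsequence of $\{\mu_k\}$ that satisfies \eqref{subseq2} with a margin.

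For the first stage, the key observation is that in the weak limit decomposition $\mu_\infty + m\delta_p$ the atom at $p$ has already been split off, so $\mu_\infty(\{p\}) = 0$; combined with finiteness of $\mu_\infty(U)$, continuity from above yields $\lim_{\delta \to 0}\mu_\infty(B(0,\delta)) = 0$. I will set $\varepsilon_0 := \bar{\varepsilon}/4$ and $\varepsilon_k := \varepsilon_0/2^k$ for $k \ge 1$, then pick a strictly decreasing $\delta_k \to 0$ recursively: choose $\delta_0$ small enough that $\mu_\infty(B(0,\delta_0)) \le \varepsilon_0$, and each subsequent $\delta_k < \delta_{k-1}$ with $\mu_\infty(B(0,\delta_k)) < \varepsilon_k/2$. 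Along the way I also arrange that $\mu_\infty(\partial B(0,\delta_k)) = 0$ for every $k$, which is possible since the set of radii $\delta$ with $\mu_\infty(\partial B(0,\delta)) > 0$ is at most countable. This makes each $\partial B(0,\delta_k)$ a null set for the limit measure $\mu_\infty + m\delta_p$ (since $p$ lies in the interior), so the portmanteau theorem will apply cleanly.

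For the second stage, for each fixed index $j$ the weak-$*$ convergence $\mu_k \to \mu_\infty + m\delta_p$ together with the null-boundary property of $B(0,\delta_j)$ gives
\[
\mu_k(B(0,\delta_j)) \longrightarrow \mu_\infty(B(0,\delta_j)) + m \qquad \text{as } k \to \infty,
\]
so one can choose $K_j$ with $|\mu_k(B(0,\delta_j)) - \mu_\infty(B(0,\delta_j)) - m| < \varepsilon_j/2$ for all $k \ge K_j$. I then diagonalize: extract a subsequence $n_k$ inductively with $n_k \ge \max\{K_1,\ldots,K_{2k}\}$, and relabel $\mu_{n_k}$ as $\mu_k$. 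Then for every $k$ and every index $j$ with $1 \le j \le 2k$ the new $k$-th term satisfies $k \ge K_j$, yielding \eqref{subseq2} with margin $\varepsilon_j/2$. Condition \eqref{subseq1} survives the relabelling trivially since it concerns only $\mu_\infty$. The only delicate point, and the main obstacle if any, is coordinating the recursive construction of the $\delta_k$ so as to simultaneously satisfy the $\mu_\infty$-bound and avoid the countable set of $\mu_\infty$-charged radii; this bookkeeping is routine.
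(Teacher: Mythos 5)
Your proposal is correct and follows essentially the same route as the paper: fix $\varepsilon_0=\bar{\varepsilon}/4$, choose decreasing $\varepsilon_k,\delta_k\to 0$ controlling the $\mu_\infty$-mass of $B(0,\delta_k)$, and then extract a diagonal subsequence so that the measure convergence $\mu_k\to\mu_\infty+m\delta_p$ yields \eqref{subseq2} for all $1\le m\le 2k$. Your additional step of choosing the radii $\delta_k$ so that $\mu_\infty(\partial B(0,\delta_k))=0$ (to invoke the portmanteau theorem legitimately) is a point the paper's proof passes over silently, but it is a refinement of the same argument rather than a different one.
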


\begin{proof}
Choose $\delta_{0} > 0$ and $\varepsilon_{0} = \bar{\varepsilon}/4$ such that the first equation in \eqref{subseq1} holds.

From \eqref{meas conv2-re}, given any $\varepsilon < \varepsilon_{0}$, $\delta < \delta_{0}$, there is a subsequence $\mu_{n_{k}}$ of $\mu_{k}$ such that
\begin{equation} \label{subseq2'}
\left\lvert \int_{B(0,\delta)} d\mu_{n_{k}} - d\mu_{\infty} - m \delta_{p} \right\rvert < \varepsilon
\end{equation}
for all $k$.

Pick $\varepsilon_{k} \rightarrow 0$ with $\varepsilon_{k} \leq \varepsilon_{k-1}/2$.
Pick $\delta_{k} \rightarrow 0$ with $\delta_{k} \leq \delta_{k-1}/2$ such that the second equation in \eqref{subseq1} holds.

For $(\varepsilon_{1},\delta_{1})$, there exist a subsequence $\mu^{1}_{k}$ of $\mu_{k}$ such that Equation \eqref{subseq2'} holds with $(\varepsilon_{1},\delta_{1})$.
For $(\varepsilon_{2},\delta_{2})$, there exist further subsequence $\mu^{2}_{k}$ of $\mu^{1}_{k}$ such that Equation \eqref{subseq2'} holds with $(\varepsilon_{1},\delta_{1})$ and $(\varepsilon_{2},\delta_{2})$.
Keep going and choose diagonal of above, say $\mu^{k}_{k}$.
Finally rename $\mu_{k} = \mu^{2k}_{2k}$.
Then, given any $k$, for all $1 \leq m \leq 2k$, Equation \eqref{subseq2} holds and the lemma is proved.
\end{proof}

Denote $B_{k} = B(0,\delta_{k})$.
By the choice of $B_{k}$, we have
\begin{equation} \label{mkBk}
\lim_{k \rightarrow \infty} \int_{B_{k}} d\mu_{k} = m.
\end{equation}
For simplicity, we fix $k$ and denote $\mu_{k}$ by simply $\mu$.
We first clarify which assumption we will use.
\begin{assume}\label{assume mu}
Assume $\mu$ is a smooth finite mass measure on a bounded set $U \in \mathbb{C}$.
By choosing $k$ large enough we may assume
\begin{enumerate}
\item \label{k cond1}
$2\varepsilon_{k} + 2\varepsilon_{2k} < \bar{\varepsilon}$,
\item \label{k cond2}
$3 \delta_{2k-1} < \delta_{k}$,
\item \label{k cond3}
$E := \mu (B_{k}) > m - 2 \varepsilon_{k} > \bar{\varepsilon}$ by \eqref{subseq1} and \eqref{subseq2},
\item \label{k cond4}
$(E - \bar{\varepsilon} - 2\varepsilon_{k} - 2\varepsilon_{2k})/2 > 8 (\varepsilon_{k} + \varepsilon_{2k})$.
\end{enumerate}
\end{assume}

\begin{definition}
Given $q \in B_{k}$ and $t \in (0,1)$, define $r_{t} = q + t/(1-t)$.
Also define cross ratio $R_{q,t}(x) : U \rightarrow \mathbb{C}$ by
\begin{equation} \label{CR def}
R_{q,t}(x) = \frac{(x-q)}{(r_{t}-q)} = \frac{1-t}{t}(x-q) = (t^{-1}-1) (x-q).
\end{equation}
\end{definition}

Note that for fixed $q$,
\begin{itemize}
\item
as $t \rightarrow 0$, $r_{t} \rightarrow q$ and $R_{q,t}(x) \rightarrow \infty$ for all $x \neq q$.
\item
as $t \rightarrow 1$, $r_{t} \rightarrow \infty$ and $R_{q,t}(x) \rightarrow 0$ for all $x$.
\end{itemize}

\begin{lemma} \label{out measure1}
Let $\mu$ be as in Assumption \ref{assume mu}.
Given $q \in B_{k}$, there exists a unique $t=t_{q} \in (0,1)$ such that
\begin{equation}
\int_{R_{q,t_{q}}(B_{k}) \setminus D} (R_{q,t_{q}})_{*} d\mu =  \bar{\varepsilon}.
\end{equation}
\end{lemma}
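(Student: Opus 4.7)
The plan is to recognize that $R_{q,t}$ is nothing but an affine translation-and-scaling, so the statement reduces to a real-variable monotone question that is handled by the intermediate value theorem. First I compute $R_{q,t}^{-1}(D)$: from \eqref{CR def},
\begin{equation*}
|R_{q,t}(x)|<1 \iff |x-q| < \tfrac{t}{1-t},
\end{equation*}
so $R_{q,t}^{-1}(D) = B(q,s)$ with $s := t/(1-t)$, and the reparametrization $t\mapsto s$ is a continuous increasing bijection $(0,1)\to(0,\infty)$. Using the definition of pushforward and that $R_{q,t}$ is a bijection,
\begin{align*}
\int_{R_{q,t}(B_k)\setminus D}(R_{q,t})_* d\mu &= \mu\bigl(R_{q,t}^{-1}\bigl(R_{q,t}(B_k)\setminus D\bigr)\bigr) \\
&= \mu\bigl(B_k\setminus B(q,s)\bigr) =: g(s).
\end{align*}
So the lemma is equivalent to finding a unique $s_q\in(0,\infty)$ with $g(s_q)=\bar\varepsilon$, after which one sets $t_q = s_q/(1+s_q)$.

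Next I check the hypotheses for the intermediate value theorem applied to $g$. Smoothness of $\mu$ (Assumption \ref{assume mu}) makes $g$ continuous in $s$, since the symmetric difference of $B(q,s_n)$ and $B(q,s)$ is contained in a thin annulus whose $\mu$-measure tends to zero. As $s\to 0^+$, $g(s)\to\mu(B_k)=E$, and by Assumption \ref{assume mu}\eqref{k cond3} we have $E>\bar\varepsilon$. Since $q\in B_k = B(0,\delta_k)$, the triangle inequality gives $B(q,s)\supset B_k$ as soon as $s\ge 2\delta_k$, so $g(s)=0<\bar\varepsilon$ for such $s$. The intermediate value theorem then produces $s_q\in(0,2\delta_k)$ with $g(s_q)=\bar\varepsilon$.

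For uniqueness, $g$ is manifestly non-increasing, so any failure of uniqueness at $s_q$ would force $g$ to be constant on some $[s_1,s_2]\subset(0,2\delta_k)$, which in turn means $\mu$ vanishes on the open annulus $A := B(q,s_2)\setminus\overline{B(q,s_1)}$. But $\mu$ is the (pushforward of the) energy density of a non-constant harmonic map — non-constant because $E>\bar\varepsilon>0$ — and by unique continuation for harmonic maps $|df_k|^2$ cannot vanish identically on any nonempty open set. Hence $\mu(A)>0$, contradicting the supposed constancy of $g$ on $[s_1,s_2]$, and so $s_q$ (and therefore $t_q$) is unique.

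The only genuine (mild) obstacle is the uniqueness step: pure smoothness of $\mu$ yields only weak monotonicity of $g$, and to upgrade it to the strict monotonicity needed for uniqueness I must appeal to the harmonic-map origin of $\mu$ via unique continuation. Existence itself is essentially just bookkeeping around the intermediate value theorem.
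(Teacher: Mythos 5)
Your existence argument is essentially the paper's own: after the reparametrization $s=t/(1-t)$ your $g(s)=\mu\bigl(B_{k}\setminus B(q,s)\bigr)$ is exactly the paper's $f(t)=\mu(A_{t})$ with $A_{t}=B_{k}\setminus R_{q,t}^{-1}(D)$, the endpoint values $E>\bar{\varepsilon}$ and $0$ are the same, and existence follows from the intermediate value theorem in both cases. The divergence is in the uniqueness step, and there your argument has a genuine gap. The lemma is stated for an arbitrary measure $\mu$ satisfying Assumption \ref{assume mu}, and it is applied twice: in the proof of Lemma \ref{mark}, where $\mu_{k}$ really is the energy density of a harmonic map, but also in the proof of Lemma \ref{mark2}, where $\mu_{k}$ is the \emph{extended} push-forward measure, which is \emph{defined to be zero} on the open disk $B_{t_{k}/\delta}$. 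For that measure your claim that ``by unique continuation $|df_{k}|^{2}$ cannot vanish identically on any nonempty open set'' is false as applied to $\mu$: it vanishes on a nonempty open set by construction. The argument can be repaired there (since $q=p$ is the center of the dead disk, an annulus on which $g$ is constant either meets the annular region where $\mu$ is the push-forward energy density, in which case unique continuation on the connected neck forces $E=0$, or lies inside the dead disk, where $g\equiv E>\bar{\varepsilon}$), but you did not carry this out, and it uses structure well beyond Assumption \ref{assume mu}. A smaller inaccuracy: constancy of $g$ on $[s_{1},s_{2}]$ only gives $\mu\bigl((B(q,s_{2})\setminus\overline{B(q,s_{1})})\cap B_{k}\bigr)=0$, not vanishing on the whole annulus, so you still need to check that this intersection contains a nonempty open set; this is true because $g=\bar{\varepsilon}>0$ forces $B_{k}\setminus B(q,s_{1})$ to have positive measure, but it is a step you skipped.

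For comparison, the paper obtains uniqueness without invoking harmonic maps at all: for each $x\neq q$ one has $\partial_{t}\lvert R_{q,t}(x)\rvert=-t^{-2}\lvert x-q\rvert<0$, so the sets $A_{t}$ shrink strictly in $t$, and the paper concludes that $f(t)$ is strictly decreasing. You are right that, for a completely general measure, strict nesting alone yields only weak monotonicity, so the subtlety you flag is real; but a correct fix must either stay within the hypotheses of Assumption \ref{assume mu} or be verified separately for each of the two measures to which the lemma is actually applied, and the unique-continuation shortcut as you stated it does not do this.
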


\begin{proof}
Define a continuous function $f(t) : (0,1) \rightarrow [0,\infty)$ by
\begin{equation*}
f(t) = \int_{R_{q,t_{q}}(B_{k}) \setminus D} (R_{q,t})_{*} d\mu = \int_{A_{t}} d\mu
\end{equation*}
where $A_{t} = B_{k} \setminus {R_{q,t}}^{-1}(D) = \{x \in B_{k} : \lvert R_{q,t}(x) \rvert > 1\}$.
Now for $x \neq q$,
\begin{equation*}
\frac{\partial}{\partial t} \lvert R_{q,t}(x) \rvert = -t^{-2} \left\lvert x-q \right\rvert < 0
\end{equation*}
so $\{A_{t}\}$ is a family of sets strictly descending on $t$ hence $f(t)$ is strictly decreasing.
Note that
\begin{align*}
\lim_{t \rightarrow 0}f(t) &= \lim_{t \rightarrow 0}\int_{A_{t}} d\mu = \int_{B_{k} \setminus \{q\}} d\mu = E > \bar{\varepsilon}\\
\lim_{t \rightarrow 1}f(t) &= \lim_{t \rightarrow 1}\int_{A_{t}} d\mu = \int_{\emptyset} d\mu = 0 < \bar{\varepsilon}
\end{align*}
hence there exists a unique $t_{q}$ such that $f(t_{q}) = \bar{\varepsilon}$.
\end{proof}

\begin{definition}
For simplicity, denote $R_{q} = R_{q,t_{q}}$.
\end{definition}

\begin{lemma}\label{t cont}
The assignment $q \mapsto t_{q}$ is continuous on $q \in B_{k}$.
\end{lemma}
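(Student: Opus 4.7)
The plan is to realize $t_q$ as the solution of an equation $F(q,t) = \bar\varepsilon$ and deduce continuity by combining joint continuity of $F$ with the strict monotonicity already established. Concretely, set
\[
F(q,t) = \mu(A_{q,t}), \qquad A_{q,t} = \{x \in B_{k} : |x-q| > s(t)\}, \qquad s(t) = \tfrac{t}{1-t},
\]
so that by Lemma~\ref{out measure1}, $t_{q}$ is the unique value in $(0,1)$ with $F(q,t_{q}) = \bar\varepsilon$.

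First I would check that $F$ is jointly continuous on $B_{k} \times (0,1)$. Fix $(q,t)$ and take $(q_{n},t_{n}) \to (q,t)$. The symmetric difference $A_{q_{n},t_{n}} \triangle A_{q,t}$ is contained in an arbitrarily thin neighborhood of $\{|x-q| = s(t)\}$ together with a set shrinking to $\partial B_{k}$; since $\mu$ is smooth (absolutely continuous with respect to Lebesgue measure), this circle and boundary carry no $\mu$-mass, so $\chi_{A_{q_{n},t_{n}}} \to \chi_{A_{q,t}}$ $\mu$-a.e., and bounded convergence gives $F(q_{n},t_{n}) \to F(q,t)$.

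Second, I would run a subsequential compactness argument. Let $q_{n} \to q$ in $B_{k}$. Any subsequence of $\{t_{q_{n}}\} \subset [0,1]$ has a further subsequence converging to some $t^{*} \in [0,1]$. The endpoints are easy to exclude: if $t^{*} = 1$, then $s(t_{q_{n}}) \to \infty$ and eventually $A_{q_{n},t_{q_{n}}} = \emptyset$ (since $B_{k}$ is bounded), forcing $F(q_{n},t_{q_{n}}) \to 0$, contradicting $F(q_{n},t_{q_{n}}) = \bar\varepsilon$; if $t^{*} = 0$, then $s(t_{q_{n}}) \to 0$ and $A_{q_{n},t_{q_{n}}}$ exhausts $B_{k} \setminus \{q\}$, so by smoothness of $\mu$ we would get $F(q_{n},t_{q_{n}}) \to \mu(B_{k}) = E > \bar\varepsilon$ (using Assumption~\ref{assume mu}\eqref{k cond3}), again a contradiction. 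So $t^{*} \in (0,1)$.

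Finally, by the joint continuity of $F$, $F(q,t^{*}) = \lim F(q_{n},t_{q_{n}}) = \bar\varepsilon$, and then by the uniqueness statement in Lemma~\ref{out measure1} we conclude $t^{*} = t_{q}$. Since every subsequence has a further subsequence converging to the same limit $t_{q}$, the full sequence converges, proving continuity. The only real technical point is the vanishing of $\mu$ on the critical circles $\{|x-q|=s(t)\}$, which is exactly where smoothness of $\mu$ is used; everything else is a standard compactness/monotonicity argument.
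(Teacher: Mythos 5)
Your argument is correct, but it proceeds differently from the paper. The paper's proof is a direct quantitative sandwich: writing $s(t)=t/(1-t)$, it picks $\delta$ so that $|s(t_q)-s(t)|\leq\delta$ forces $|t_q-t|\leq\varepsilon$, and then for $|q-q'|\leq\delta$ it traps $R_{q,t_q}^{-1}(D)$ between $R_{q',t_+}^{-1}(D)$ and $R_{q',t_-}^{-1}(D)$ with $s(t_\pm)=s(t_q)\pm\delta$; monotonicity of $t\mapsto\mu(B_k\setminus R_{q',t}^{-1}(D))$ together with the defining property of $t_{q'}$ then gives $t_+\leq t_{q'}\leq t_-$, hence $|t_q-t_{q'}|\leq\varepsilon$. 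This uses only the monotonicity from Lemma \ref{out measure1} and yields an explicit modulus of continuity through $s^{-1}$. Your route is softer: you establish joint continuity of $(q,t)\mapsto\mu(A_{q,t})$ (which requires that $\mu$ gives no mass to circles, available since $\mu$ is smooth by Assumption \ref{assume mu} -- the same fact the paper already uses implicitly to get continuity of $f(t)$ in Lemma \ref{out measure1}), exclude the endpoint limits $t^*=0,1$ using $0<\bar\varepsilon<E$ from Assumption \ref{assume mu}\eqref{k cond3}, and then conclude by uniqueness of $t_q$ plus the subsequence trick. Both arguments are sound; yours is more abstract and generalizes readily to any non-atomic finite measure, while the paper's is more elementary and quantitative, needing no a.e.\ convergence or bounded convergence theorem. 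Two cosmetic points: the symmetric difference $A_{q_n,t_n}\triangle A_{q,t}$ already lies in a thin annulus about $\{|x-q|=s(t)\}$ intersected with $B_k$, so the extra set near $\partial B_k$ you mention is not needed; and in the $t^*=0$ case it is cleaner to say the complement of $A_{q_n,t_{q_n}}$ in $B_k$ is a disk of radius $s(t_{q_n})\to0$ about $q_n$, whose $\mu$-mass tends to $0$ by non-atomicity.
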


\begin{proof}
Denote $f(t) = t/(1-t)$.
Then, $x \in R_{q,t}^{-1}(D)$ implies $\lvert x-q \rvert \leq f(t)$.

Fix $q$ and $\varepsilon>0$.
Since $f^{-1}$ is continuous, there is $\delta>0$ such that if $\lvert f(t_{q}) - f(t) \rvert \leq \delta$, then $\lvert t_{q} - t \rvert \leq \varepsilon$.
Fix $q' \in B_{k}$ such that $\lvert q - q' \rvert \leq \delta$.
Now it is enough to show that $\lvert t_{q} - t_{q'} \rvert \leq \varepsilon$.

Define $t_{\pm}$ be such that $f(t_{\pm}) = f(t_{q}) \pm \delta$.
It is easy to see that $t_{+} < t_{-}$.
We will show that
\begin{equation*}
R_{q',t_{+}}^{-1}(D) \subset R_{q,t_{q}}^{-1}(D) \subset R_{q',t_{-}}^{-1}(D).
\end{equation*}
To see this, for $x \in R_{q,t_{q}}^{-1}(D)$, $\lvert x-q \rvert \leq f(t_{q})$ implies
\begin{equation*}
\lvert x-q' \rvert \leq f(t_{q}) + \lvert q - q' \rvert \leq f(t_{q}) + \delta = f(t_{+}),
\end{equation*}
so $w \in R_{q',t_{+}}^{-1}(D)$.
The other direction is similar.

Hence, from the definition of $t_{q}$, we have $t_{+} \leq t_{q'} \leq t_{-}$.
Therefore $\lvert f(t_{q}) - f(t_{q'}) \rvert \leq \delta$, which implies $\lvert t_{q} - t_{q'} \rvert \leq \varepsilon$.
\end{proof}

\begin{definition}\label{F def}
Denote $\nu_{q} = (R_{q})_{*} \mu$.
Define $F : B_{k} \rightarrow \mathbb{C}$ by
\begin{equation}
F(q) = \int_{D} z d\nu_{q} (z).
\end{equation}
\end{definition}

\begin{prop}\label{conti}
F(q) in Definition \ref{F def} is continuous on $q \in B_{k}$.
\end{prop}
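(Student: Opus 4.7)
The plan is to rewrite $F(q)$ as an integral against $\mu$ on $U$ via the change of variables given by the affine map $R_q$, and then pass to the limit using Lemma \ref{t cont} together with the dominated convergence theorem. With $R_q(x) = (t_q^{-1} - 1)(x - q)$ and $f(t) = t/(1-t)$, the preimage $R_q^{-1}(D)$ is exactly the Euclidean disk $B(q, f(t_q))$, so pulling back gives
\begin{equation*}
F(q) \;=\; \int_D z \, d\nu_q(z) \;=\; (t_q^{-1} - 1) \int_{B(q, f(t_q))} (x - q) \, d\mu(x).
\end{equation*}

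Next I would fix $q \in B_k$ and a sequence $q_n \to q$ in $B_k$. By Lemma \ref{t cont}, $t_{q_n} \to t_q$, hence the scalar factor $(t_{q_n}^{-1} - 1)$ converges to $(t_q^{-1} - 1)$ and the radius $f(t_{q_n})$ converges to $f(t_q)$; since $t_q \in (0, 1)$, the values $t_{q_n}$ stay in a compact subinterval of $(0, 1)$ for large $n$, keeping both quantities bounded. Setting
\begin{equation*}
g_n(x) \;=\; (t_{q_n}^{-1} - 1)(x - q_n) \, \mathbf{1}_{B(q_n,\, f(t_{q_n}))}(x),
\end{equation*}
these integrands are uniformly bounded on the bounded set $U$ and converge pointwise to $g(x) = (t_q^{-1} - 1)(x - q)\,\mathbf{1}_{B(q,\, f(t_q))}(x)$ at every $x$ not lying on the sphere $\partial B(q, f(t_q))$.

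The only subtle point, and the main (mild) obstacle, is verifying that the indicator functions converge $\mu$-almost everywhere, which amounts to checking $\mu\bigl(\partial B(q, f(t_q))\bigr) = 0$. This is immediate from Assumption \ref{assume mu}: since $\mu$ is a smooth measure, it is absolutely continuous with respect to Lebesgue measure on $\mathbb{C}$, so the codimension-one sphere is $\mu$-null. The dominated convergence theorem then yields $F(q_n) \to F(q)$, proving continuity of $F$ on $B_k$.
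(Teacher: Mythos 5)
Your proof is correct and follows essentially the same route as the paper: both rest on Lemma \ref{t cont} (continuity of $q \mapsto t_{q}$) and the explicit formula \eqref{CR def} for $R_{q}$, the paper simply asserting that continuity of $R_{q}$ makes the push-forward $\nu_{q}$, and hence $F$, continuous. Your version is a more careful rendering of that one-line argument — the change of variables to the fixed measure $\mu$, the uniform bound on the integrands, and especially the observation that $\mu\bigl(\partial B(q,f(t_{q}))\bigr)=0$ because $\mu$ is smooth — which addresses precisely the boundary-indicator subtlety the paper's proof leaves implicit.
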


\begin{proof}
From Lemma \ref{t cont} and Equation \eqref{CR def}, it is obvious that $R_{q} = R_{q,t_{q}}$ is continuous on $q \in B_{k}$.
Hence push-forward measure $\nu_{q} = (R_{q})_{*}\mu$ is also continuous on $q$, and $F(q)$ is also continuous on $q$.
\end{proof}

\begin{prop}\label{zero}
Let $F(q)$ be in Definition \ref{F def}.
There exists $q_{k} \in B_{2k-1}$ such that $F(q_{k}) = 0$.
\end{prop}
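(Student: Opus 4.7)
The approach is to recast $F(q)=0$ as a fixed-point equation for a centroid map and conclude by a topological degree argument. Using $R_q(x) = (t_q^{-1}-1)(x-q)$ from \eqref{CR def}, a change of variables gives
\begin{equation*}
F(q) = (t_q^{-1}-1)\,\mu(R_q^{-1}(D))\,\bigl[\Psi(q)-q\bigr],\qquad \Psi(q) := \frac{1}{\mu(R_q^{-1}(D))}\int_{R_q^{-1}(D)} x\, d\mu(x),
\end{equation*}
so $F(q)=0$ iff $\Psi(q)=q$. Moreover $\Psi$ is continuous on $B_k$ by essentially the proof of Proposition \ref{conti}. Writing $\rho_q = t_q/(1-t_q)$, we have $R_q^{-1}(D) = B(q,\rho_q)$, and Lemma \ref{out measure1} gives $\mu(B(q,\rho_q)) = E - \bar{\varepsilon}$.

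The crux is to establish $|\Psi(q)| < \delta_{2k-1}$ for every $q \in \partial B_{2k-1}$. Applying \eqref{subseq2} at indices $k$ and $2k$ together with \eqref{subseq1} yields $\mu(B_{2k}) > m - \varepsilon_{2k}$ and $\mu(B_k \setminus B_{2k}) < 2\varepsilon_k + \varepsilon_{2k}$. Splitting the centroid integral across $B_{2k}$ and its complement in $B_k$ and bounding $|x|$ on each piece,
\begin{equation*}
|\Psi(q)| \leq \delta_{2k} + \frac{\delta_k(2\varepsilon_k + \varepsilon_{2k})}{E - \bar{\varepsilon}}.
\end{equation*}
Condition \ref{k cond2} of Assumption \ref{assume mu} gives $\delta_{2k}\leq\delta_{2k-1}/2$, while condition \ref{k cond4}, combined with the freedom to shrink $\varepsilon_k,\varepsilon_{2k}$ for $k$ large, forces the second term below $\delta_{2k-1}/2$.

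To conclude, consider the straight-line homotopy $H(q,s) = q - s\Psi(q)$ on $\overline{B_{2k-1}}\times[0,1]$. Since $|s\Psi(q)|\leq|\Psi(q)|<|q|$ on $\partial B_{2k-1}\times[0,1]$, $H$ never vanishes on the boundary; hence $q\mapsto q-\Psi(q)$ is homotopic on $\partial B_{2k-1}$ to the identity and has winding number $1$ around $0$. By degree theory it vanishes at some $q_k \in B_{2k-1}$, giving $F(q_k)=0$.

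The main obstacle is the centroid estimate: the budget of errors in Assumption \ref{assume mu} is tight, and one must carefully separate the mass concentrated near the origin (captured inside $B_{2k}$) from the dispersed residual mass in $B_k\setminus B_{2k}$, then track how conditions \ref{k cond2} and \ref{k cond4} combine to beat the crude bound $|\Psi(q)|\leq\delta_k$. The degree conclusion is then routine.
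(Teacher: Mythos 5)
Your reduction of $F(q)=0$ to the fixed-point equation $\Psi(q)=q$, and the homotopy/degree argument at the end, are in spirit the same route the paper takes: Lemma \ref{angle} is precisely a boundary non-vanishing statement for $F$ on $\partial B_{2k-1}$ (namely $Re(F(q)/(-q))>0$, which in your notation is $Re((q-\Psi(q))/q)>0$), followed by a winding-number argument. The genuine problem is your centroid estimate. You bound the residual mass $\mu(B_{k}\setminus B_{2k})\le 2\varepsilon_{k}+\varepsilon_{2k}$ as sitting at distance up to $\delta_{k}$ from the origin, and then assert that condition \ref{k cond4} of Assumption \ref{assume mu}, ``combined with the freedom to shrink $\varepsilon_{k},\varepsilon_{2k}$ for $k$ large,'' forces $\delta_{k}(2\varepsilon_{k}+\varepsilon_{2k})/(E-\bar{\varepsilon})<\delta_{2k-1}/2$. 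This does not follow. Condition \ref{k cond4} involves only $E,\bar{\varepsilon},\varepsilon_{k},\varepsilon_{2k}$ and says nothing about the ratio $\delta_{2k-1}/\delta_{k}$; condition \ref{k cond2} forces $\delta_{k}>3\delta_{2k-1}$, and the construction of the sequences gives $\delta_{j}\le\delta_{j-1}/2$, hence $\delta_{k}\ge 2^{k-1}\delta_{2k-1}$, while the $\varepsilon_{j}$ were chosen with no relation whatsoever to the $\delta_{j}$ (for instance $\delta_{j}=e^{-e^{j}}$, $\varepsilon_{j}=2^{-j}$ is admissible). In such a case $\delta_{k}\varepsilon_{k}/(E-\bar{\varepsilon})$ dwarfs $\delta_{2k-1}$, so $|\Psi(q)|<\delta_{2k-1}$ is not established and the homotopy $H(q,s)=q-s\Psi(q)$ is not known to be nonvanishing on $\partial B_{2k-1}$; also note there is no ``freedom to shrink'' the $\varepsilon_{j}$ at this stage, since they were fixed before $q_k$ is sought.

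The estimate can be repaired, and the repair is exactly the localization the paper uses (implicitly) in Lemma \ref{angle} and explicitly in Lemma \ref{conv to C}: for $q\in\partial B_{2k-1}$ the set $R_{q}^{-1}(D)=B\big(q,t_{q}/(1-t_{q})\big)$ is contained in $B(0,3\delta_{2k-1})$. Indeed, by condition \ref{k cond1} and \eqref{subseq2}, $B_{2k}\not\subset R_{q}^{-1}(D)$ (otherwise $\bar{\varepsilon}=\mu(B_{k}\setminus R_{q}^{-1}(D))\le\mu(B_{k}\setminus B_{2k})<\bar{\varepsilon}$), so $t_{q}/(1-t_{q})\le|q|+\delta_{2k}$ and every $x\in R_{q}^{-1}(D)$ satisfies $|x|\le 2\delta_{2k-1}+\delta_{2k}\le 3\delta_{2k-1}$; this is what licenses the factor $(\delta_{2k-1}+3\delta_{2k-1})/\delta_{2k-1}=4$ in the paper's proof. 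With this, the residual mass contributes at most $3\delta_{2k-1}\,(2\varepsilon_{k}+2\varepsilon_{2k})/(E-\bar{\varepsilon})<\delta_{2k-1}/3$, because condition \ref{k cond4} gives $E-\bar{\varepsilon}>18(\varepsilon_{k}+\varepsilon_{2k})$, and together with $\delta_{2k}\le\delta_{2k-1}/2$ (which comes from the halving in the construction of the $\delta_{j}$, not from condition \ref{k cond2} as you cite) you obtain $|\Psi(q)|<\delta_{2k-1}$ on $\partial B_{2k-1}$. Your degree argument then goes through and yields a zero of $q-\Psi(q)$, hence of $F$, in $B_{2k-1}$, in close parallel with the paper's combination of Lemma \ref{angle} and the index argument.
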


This proposition is not trivial.
For example, consider $F(q) = (q+2\delta_{k})/3$.
Then $\lvert F(q) \rvert \geq \delta_{k}/3 > 0$ for all $q$, which is not the desired result.
To avoid this case, we need the following lemma.

\begin{lemma}\label{angle}
Let $F(q)$ be in Definition \ref{F def}.
For any given $q \in \partial B_{2k-1}$,
\begin{equation*}
Re \left( \frac{F(q)}{-q} \right) > 0.
\end{equation*}
\end{lemma}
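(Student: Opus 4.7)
My approach is to use the change of variables $z = R_q(x) = s(x-q)$, where $s = t_q^{-1}-1 > 0$, which turns $F(q) = \int_D z\, d\nu_q$ into $F(q) = s \int_{B(q,1/s)} (x-q)\, d\mu(x)$. Since $s$ is real, dividing by $-q$ and taking real parts gives $\mathrm{Re}(F(q)/(-q)) = s \int_{B(q,1/s)} [1 - \mathrm{Re}(x/q)]\, d\mu(x)$. The strategy is to show that most of the $\mu$-mass inside $B(q,1/s)$ lies in the small ball $B_{2k}$ around the origin, where $|x/q| \leq \delta_{2k}/\delta_{2k-1} \leq 1/2$ and so the integrand is at least $1/2$; only a negligible amount of mass can lie in the remaining annular region.

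The first main step is to control the radius of $B(q,1/s)$ by showing $1/s \leq 2|q| = 2\delta_{2k-1}$. Suppose otherwise: then $B(q,1/s) \supset B(q, 2\delta_{2k-1}) \supset B_{2k}$, since any $y \in B_{2k}$ satisfies $|y-q| \leq \delta_{2k} + \delta_{2k-1} \leq 3\delta_{2k-1}/2 < 2\delta_{2k-1}$. Consequently $\mu(B(q,1/s) \cap B_k) \geq \mu(B_{2k}) \geq m - 2\varepsilon_{2k}$ by \eqref{subseq2}. However, the defining property of $t_q$ in Lemma \ref{out measure1} forces $\mu(B(q,1/s) \cap B_k) = E - \bar{\varepsilon} \leq m + 2\varepsilon_k - \bar{\varepsilon}$. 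Combining these inequalities gives $\bar{\varepsilon} \leq 2\varepsilon_k + 2\varepsilon_{2k}$, contradicting Assumption \ref{assume mu}(1).

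With $1/s \leq 2\delta_{2k-1}$ in hand, Assumption \ref{assume mu}(2) implies $B(q,1/s) \subset B(0, 3\delta_{2k-1}) \subset B_k$, so the integration is entirely inside $B_k$ with total $\mu$-mass equal to $E - \bar{\varepsilon}$. Splitting the domain at $B_{2k}$: on $B(q,1/s) \cap B_{2k}$ the integrand is at least $1/2$ and the mass is at least $(E - \bar{\varepsilon}) - (2\varepsilon_k + 2\varepsilon_{2k})$, while on $B(q,1/s) \cap (B_k \setminus B_{2k})$ the integrand is at least $1 - |x/q| \geq -2$ (using $|x| \leq |q| + 1/s \leq 3|q|$) and the mass is at most $2\varepsilon_k + 2\varepsilon_{2k}$. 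Assembling,
$\mathrm{Re}(F(q)/(-q)) \geq s\bigl[\tfrac{1}{2}(E - \bar{\varepsilon} - 2\varepsilon_k - 2\varepsilon_{2k}) - 4(\varepsilon_k + \varepsilon_{2k})\bigr] > 0$,
where the strict positivity follows directly from Assumption \ref{assume mu}(4). The main obstacle is the geometric bound $1/s \leq 2|q|$ in step 2, which rules out the situation where the ball $B(q,1/s)$ is too large and incorporates too much concentrated mass; the rest of the argument amounts to careful bookkeeping with the $\varepsilon$-parameters prepared in Assumption \ref{assume mu}.
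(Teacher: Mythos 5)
Your proof is correct and follows essentially the same route as the paper: rewrite $F(q)$ as $\frac{1-t_q}{t_q}\int_{R_q^{-1}(D)}(x-q)\,d\mu$, split the domain into the concentrated core $B_{2k}$ (integrand $\geq 1/2$, mass $\geq E-\bar{\varepsilon}-2\varepsilon_k-2\varepsilon_{2k}$) and the thin annular remainder (mass $\leq 2\varepsilon_k+2\varepsilon_{2k}$, integrand bounded below via $|x|\leq 3|q|$), and conclude by Assumption \ref{assume mu}(4). The only difference is cosmetic and in your favor: you prove the radius bound $t_q/(1-t_q)\leq 2|q|$ explicitly inside the proof (by the same contradiction the paper uses later in Lemma \ref{conv to C}), whereas the paper's proof of Lemma \ref{angle} uses the resulting bound $|x|\leq 3\delta_{2k-1}$ on the bad set without spelling it out.
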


\begin{proof}
\begin{equation*}
F(q) = \int_{D} z d\nu_{q}(z) = \int_{R_{q}^{-1}(D)} R_{q}(x) d\mu(x) = \frac{1-t_{q}}{t_{q}}\int_{R_{q}^{-1}(D)} (x-q) d\mu(x).
\end{equation*}
Denote $f(x) = x-q$.
If $x \in B_{2k}$, then for $u=x/q$,
\begin{equation*}
Re \left( \frac{f(x)}{f(0)} \right) = Re (1-u) > 0
\end{equation*}
because $\lvert u \rvert \leq 1/2$.

Define sets $A,B,C$ by
\begin{align*}
A &:= \{ x \in R_{q}^{-1}(D) : x \in B_{2k}\}\\
B &:= \left\{ x \in R_{q}^{-1}(D) : x \not\in A \textrm{ and } Re \left(\frac{f(x)}{f(0)}\right) \geq 0\right\}\\
C &:= \left\{ x \in R_{q}^{-1}(D) : Re \left(\frac{f(x)}{f(0)}\right) < 0\right\}.
\end{align*}

Note that $\mu(A) + \mu(B) + \mu(C) = \mu(R_{q}^{-1}(D)) = E - \bar{\varepsilon}$ and $\mu(B) + \mu(C) \leq \mu(B_{k} \setminus B_{2k}) < 2\varepsilon_{k} + 2\varepsilon_{2k}$, by Equation \eqref{subseq2} for $m=k$ and $m=2k$.
So we have $\mu(A) = E - \bar{\varepsilon} - \mu(B) - \mu(C) \geq E - \bar{\varepsilon} - 2 \varepsilon_{k} - 2 \varepsilon_{2k}$ and $\mu(C) \leq 2\varepsilon_{k} + 2\varepsilon_{2k}$.

Now,
\begin{align*}
Re \left(\frac{F(q)}{-q} \frac{t_{q}}{1-t_{q}}\right) &= \int_{A \cup B \cup C} Re \left(\frac{f(x)}{f(0)} \right) d\mu(x)\\
&\geq \int_{A} Re\left( \frac{f(x)}{f(0)} \right) d\mu(x) + \int_{C} Re\left( \frac{f(x)}{f(0)} \right) d\mu(x)\\
&\geq \int_{A} \frac{\delta_{2k-1} - \delta_{2k}}{\delta_{2k-1}} d\mu - \int_{C} \frac{\delta_{2k-1} + 3\delta_{2k-1}}{\delta_{2k-1}} d\mu \\
&\geq \frac{1}{2} \mu(A) - 4 \mu(C) \geq \frac{1}{2} (E - \bar{\varepsilon} - 2\varepsilon_{k} - 2 \varepsilon_{2k}) - 8 (\varepsilon_{k} + \varepsilon_{2k}) > 0
\end{align*}
by Assumption \eqref{k cond4}.

Hence we get $Re \left( -F(q)/q \right) > 0$.
\end{proof}

\begin{proof}
(Proof of Proposition \ref{zero})
Note that by Lemma \ref{angle}, $F(\partial B_{2k-1})$ is a closed curve with nonzero index.
So $F(B_{2k-1})$ contains $0$, which means that there exists $q \in B_{2k-1}$ such that $F(q)=0$.
\end{proof}

\smallskip

Now we go back to original sequence with subscript $k$.
For $q_{k}$ in Proposition \ref{zero}, denote $t_{k}=t_{q_{k}}$, $r_{k} = r_{t_{k}}$ and $R_{k} = R_{q_{k}}$.

\begin{lemma}\label{conv to C}
$r_{k} \in B_{k}$.
Also, $R_{k}(B_{k}) \rightarrow \mathbb{C}$ as $k \rightarrow \infty$.
\end{lemma}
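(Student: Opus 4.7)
My plan is to first derive a uniform upper bound $\rho_k := t_k/(1-t_k) < 2\delta_{2k-1}$, from which both claims will follow by short triangle-inequality computations. Note that $R_k(x) = (x-q_k)/\rho_k$, so $R_k^{-1}(D)$ is exactly the open ball $B(q_k,\rho_k)$, and the defining identity of $t_k$ from Lemma \ref{out measure1} reads
\begin{equation*}
\mu\bigl(B_k \setminus B(q_k,\rho_k)\bigr) = \bar{\varepsilon}.
\end{equation*}
Applying \eqref{subseq2} at $m = k$ and $m = 2k$ together with \eqref{subseq1} yields $\mu(B_k) < m + 2\varepsilon_k$ and $\mu(B_{2k}) > m - 2\varepsilon_{2k}$, so $\mu(B_k \setminus B_{2k}) < 2\varepsilon_k + 2\varepsilon_{2k}$. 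By Assumption \eqref{k cond1} this is strictly less than $\bar{\varepsilon}$. Therefore $B_k \setminus B(q_k,\rho_k)$, whose $\mu$-mass is exactly $\bar{\varepsilon}$, cannot be contained in $B_k \setminus B_{2k}$, so there exists $x \in B_{2k}$ with $|x - q_k| \geq \rho_k$. Since $q_k \in B_{2k-1}$, the triangle inequality gives
\begin{equation*}
\rho_k \leq |x| + |q_k| < \delta_{2k} + \delta_{2k-1} < 2\delta_{2k-1}.
\end{equation*}

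Given this bound, the first claim is immediate: $|r_k| = |q_k + \rho_k| \leq |q_k| + \rho_k < 3\delta_{2k-1} < \delta_k$ by Assumption \eqref{k cond2}, hence $r_k \in B_k$. For the second claim I use $R_k(B_k) = B(-q_k/\rho_k,\,\delta_k/\rho_k)$ and verify that every bounded set $B(0,M) \subset \mathbb{C}$ eventually lies inside $R_k(B_k)$. For $z$ with $|z| \leq M$, the condition $z \in R_k(B_k)$ reads $|\rho_k z + q_k| < \delta_k$, and the bounds above give $|\rho_k z + q_k| \leq \rho_k M + |q_k| < (2M+1)\,\delta_{2k-1}$, which tends to $0$ as $k \to \infty$ and is in particular smaller than $\delta_k$ for all sufficiently large $k$. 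Hence $R_k(B_k) \to \mathbb{C}$.

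The only substantive step is the upper bound on $\rho_k$. It relies crucially on the coordinated choice of scales in Assumption \ref{assume mu}, namely $2\varepsilon_k + 2\varepsilon_{2k} < \bar{\varepsilon}$ and $3\delta_{2k-1} < \delta_k$; once those hypotheses are invoked, everything reduces to a triangle-inequality computation. I do not expect any genuine obstacle beyond bookkeeping the indices $k$, $2k-1$, $2k$ correctly.
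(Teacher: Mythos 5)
Your proof is correct and takes essentially the same route as the paper: both arguments bound $\rho_k = t_k/(1-t_k)$ by producing a point of $B_{2k}$ outside $R_k^{-1}(D)$ (via $\mu(B_k\setminus B_{2k}) < 2\varepsilon_k+2\varepsilon_{2k} < \bar{\varepsilon}$) and then finish with triangle inequalities. One small repair in the last step: $(2M+1)\delta_{2k-1} < \delta_k$ does not follow merely from the left-hand side tending to zero (since $\delta_k \to 0$ as well); you need the dyadic decay $\delta_j \le \delta_{j-1}/2$ from the construction of the $\delta_j$, which gives $\delta_{2k-1} \le 2^{-(k-1)}\delta_k$ and hence the claim for large $k$, exactly as in the paper.
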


\begin{proof}
First we claim that for any $q \in B_{k}$, $B_{2k} \not\subset R_{q}^{-1}(D)$.

Suppose not.
Then we have
\begin{equation*}
\bar{\varepsilon} = \int_{B_{k}\setminus R_{q}^{-1}(D)} d\mu_{k}  < \int_{B_{k}\setminus B_{2k}} d\mu_{k} \leq 2\varepsilon_{k} + 2\varepsilon_{2k} < \bar{\varepsilon}
\end{equation*}
which is a contradiction, so proves the claim.

So, there exists $x_{0} \in B_{2k}$ such that $x_{0} \not\in R_{q}^{-1}(D)$.
$\lvert R_{q}(x_{0}) \rvert > 1$ and $\lvert x_{0} \rvert \leq \delta_{2k}$ implies that
\begin{equation*}
\frac{t_{q}}{1-t_q} \leq \lvert q \rvert + \delta_{2k}.
\end{equation*}

Therefore,
\begin{equation*}
\lvert r_{k} \rvert \leq \lvert q_{k} \rvert + \frac{t_{k}}{1-t_{k}} \leq 2 \lvert q_{k} \rvert + \delta_{2k} \leq 3\delta_{2k-1} \leq \delta_{k}
\end{equation*}
which proves the first.

To show the second, it is enough to show that for any $R>0$, for all $k$ large enough, $R_{k}^{-1}(D_{R}) \subset B_{k}$ where $D_{R} \subset \mathbb{C}$ is a disk of radius $R$.
Fix $R>0$ and choose $x \in R_{k}^{-1}(D_{R})$.
Then we have
\begin{equation*}
\lvert x \rvert \leq \lvert q_{k} \rvert + R\frac{t_{k}}{1-t_{k}} \leq \delta_{2k-1} + R (\delta_{2k-1} + \delta_{2k}) \leq (1+2R)2^{-k+1}\delta_{k}.
\end{equation*}
Now choose $k$ large enough so that $(1+2R)2^{-k+1} \leq 1$.
\end{proof}

%%%%%%%%%%%%%%%%%  Subsection 6-2  %%%%%%%%%%%%%%%%%%%%%%%%%%%%%%
\subsection{Family properties}

Now we are ready to prove Lemmas \ref{mark} and \ref{mark2}.

\begin{proof}
(Proof of Lemma \ref{mark})
The energy density measures $\mu_{k}$ of $f_k$ on a neighborhood $U \subset \mathbb{C}$ of $p$ satisfies $\mu_{k} \rightarrow \mu_{\infty} + m\delta_{p}$ as measures and $m \geq 2\bar{\varepsilon}$, where $\mu_{\infty}$ is the energy density measure of $f_0$.
Choose $q_{k}$ as in Proposition \ref{zero} and $r_{k}$ as in Lemma \ref{out measure1}.
We abuse the notation $q_{k}$ and $r_{k}$ to refer points $(q_{k},b_{k}),(r_{k},b_{k}) \in C_{k}$ in regular chart, and mark them.
Since $\lim_{k}q_{k} = \lim_{k}r_{k} = p$, by Lemma \ref{new family}, we have new family of curves $\mathcal{C}'$ and forgetful map $\Phi : \mathcal{C}' \rightarrow \mathcal{C}$ such that $C'_{k} = (C_{k},q_{k},r_{k}) \rightarrow C'_{0}$ in $\mathcal{C}'$, which proves (1).

To see (2), we first describe coordinate expression in the family $\mathcal{C}'$ near $E \simeq \mathbb{C}P^{1}$ which agrees with coordinate of $\mathcal{C}$ near $p$ under forgetful map $\Phi$.
Description about coordinates can be found with more details in \cite{ACG} section 10.8.

Equip regular chart near $p$ by $(x,b) \in U \times B$ where $U \subset \mathbb{C}$ and $p = (0,0)$.
$\Phi(\sigma), \Phi(\tau)$ are sections in $\mathcal{C}$ which meet at $p$.
By adding $q$ in the base, we can see $\Phi(\sigma)$ as a marked section in this new family.
Locally, we consider the chart $(x,q,b) \in U \times U \times B \rightarrow (q,b)$ and view $\Phi(\sigma)$ and $\Phi(\tau)$ as functions on $U \times B$ with values on $U$ given by $\Phi(\sigma)(q,b) = q$ and $\Phi(\tau)(q,b) = r$ for some functions $q,r$ on $B$.

Note that for any fiber $C_{b}$ in $\mathcal{C}$, $\Phi^{-1}(C_{b})$ is a two parameter subfamily of the family $\mathcal{C}'$, consisting of $(C_{b},q,r)$ and its limit case $q=r$, which is a $1$ dimensional subset of nodal family whose fibers look like $C_{b} \cup \mathbb{C}P^{1}$, parametrized by $q$ which denotes the gluing position.
Translate coordinate from $x$ to $x' = x-q$ so that $\Phi(\sigma)$ is given by $\{x'=0\}$ and $\Phi(\tau)$ is given by $\{x' = t\}$ where $t=r-q$.
Choose a homogeneous coordinate $[\lambda:\mu] \in \mathbb{C}P^{1}$.
This gives local coordinate of $\mathcal{C}'$ near $E$ given by
\begin{equation} \label{new chart1}
((x',t),q,b,[\lambda:\mu]) \in (U \times U) \times U \times B \times \mathbb{C}P^{1} \rightarrow (t,q,b) \in U \times U \times B
\end{equation}
with equation
\begin{equation} \label{eq in C'-1}
x' \mu = t \lambda,
\end{equation}
and $\sigma$ and $\tau$ in $\mathcal{C}'$ can be written by equations
\begin{equation}
\lambda = 0 \quad \textrm{ and } \quad \lambda = \mu
\end{equation}
respectively.
Hence $\sigma$ and $\tau$ have coordinates $[0:1]$ and $[1:1]$ in $E$ respectively.
Furthermore, by choosing chart near the new node $p' = [1:0]$ in $E$ as $[\lambda : \mu] = [1:z'] \mapsto z'$ with $z' = \mu/\lambda$, Equation \eqref{eq in C'-1} can be written by
\begin{equation}
x' z' = t
\end{equation}
which is the same as nodal chart.

$(x-q,r-q,q,b,[\lambda:\mu])$ maps to $(x,b)$ under forgetful map $\Phi$ and projected to $[\lambda:\mu] \in E$ under local trivialization.
Consider the chart $[y:1] \mapsto y$ away from $p'$.
Note that $\Phi(C'_{k}) = C_{k}$ and $f'_{k} = f_{k} \circ \Phi : C'_{k} \rightarrow X$ is also a sequence of harmonic maps and
\begin{equation*}
\tilde{f}_{k}(x) = f_{k}(x,b_{k}) = f'_{k}(x-q_{k},r_{k}-q_{k},q_{k},b_{k},[\lambda:\mu]) = \tilde{f}'_{k}(z)
\end{equation*}
with $z = \lambda/\mu =(x-q_{k})/(r_{k}-q_{k}) = R_{k}(x)$, so we have $\nu_{k} = (R_{k})_{*}(e(\tilde{f}_{k})) = e(\tilde{f}'_{k})$.
Note that $\nu_{k}$ can extend to the whole $E$ by Lemma \ref{conv to C}.
Since the choice of $q_{k}$ and $r_{k}$ come from Lemmas \ref{out measure1} and \ref{zero} and cross ratio is conformally invariant, Equations \eqref{mark-eq1} and \eqref{mark-eq2} follows.
This proves (2).

Now consider (3).
By applying Lemma \ref{loc conv} again, there is a subsequence and a finite set of bubble points $\{q_{1}, \ldots, q_{l}\} \subset E \setminus \{p'\}$ such that after passing to a subsequence, $\nu_{k} \rightarrow e(f'_{0}) + \sum_{j}m_{j}\delta_{q_{j}}$ on $E \setminus \{p'\}$ with $m_{j} \geq \varepsilon'_{0}$.
Here $f'_{0} : C'_{0} \rightarrow X$ is a limit of $f'_{k}$.
By Equation \eqref{mark-eq1}, $q_{j} \in D$.
Denote $m_{\infty}$ the amount of energy concentration at $p'$, then we have
\begin{equation}
e(f'_{0})(E) + \sum_{j}m_{j} + m_{\infty} = m.
\end{equation}

For any compact set $K = \{[1:z'] : \lvert z' \rvert \geq \delta \} \subset \subset E \setminus \{p'\}$, define $B' = \{(x,b_{k}) \in C_{k} : x \in B_{k}\}$ and $K' = \{(x-q_{k},r_{k}-q_{k},q_{k},b_{k},[\lambda:\mu]) \in C'_{k} : [\lambda:\mu] \in K\}$.
Then
\begin{equation*}
m_{\infty} \leq m - e(f'_{0})(K) - \sum_{j}m_{j} = \lim_{k \rightarrow \infty} \left(\mu_{k}(B_{k}) - \nu_{k}(K) \right) = \lim_{k \rightarrow \infty} E(f'_{k},\Phi^{-1}(B') \setminus K').
\end{equation*}

We first show that for given $\delta>0$, $\Phi^{-1}(B') \setminus K' \subset B(p',\delta) \cap C'_{k}$ for all $k$ sufficiently large.
Pick $u = (x-q_{k},r_{k}-q_{k},q_{k},b_{k},[\lambda:\mu]) \in \Phi^{-1}(B') \setminus K'$.
Because $x,q_{k},r_{k} \in B_{k}$, $\lvert x-q_{k} \rvert, \lvert r_{k}-q_{k} \rvert, \lvert q_{k} \rvert, \lvert b_{k} \rvert \leq \delta$ for all $k$ sufficiently large.
Moreover, $u \notin K'$ means $[\lambda:\mu] \notin K$, which implies $\lvert \mu/\lambda \rvert = \lvert z' \rvert < \delta$.
So, $u \in B(p',\delta) \cap C'_{k}$ as desired.

Next, we will show that for any $\varepsilon>0$, there is $\delta>0$ such that $E(f'_{k}, B(p',\delta) \cap C'_{k}) < \varepsilon$ for all $k$ sufficiently large.

Fix $\varepsilon>0$ and assume $\delta<1$.
By Equation \eqref{mark-eq1}, $E(f'_{k},\Phi^{-1}(B') \setminus K') \leq \bar{\varepsilon} \leq \varepsilon''_{0}/2$.
Hence, for $\delta$ small enough, we have $E(f'_{k}, B(p',\delta) \cap C'_{k}) < \varepsilon''_{0}$.
By definition $p'$ is a regular node, so by Lemma \ref{gap neck}, there is $\delta>0$ such that $\lim_{k \rightarrow \infty}E(f'_{k}, B(p',\delta) \cap C'_{k}) < \varepsilon$.

So $m_{\infty} = 0$ and this proves the lemma.
\end{proof}

\begin{proof}
(Proof of Lemma \ref{mark2})
As description before Lemma \ref{mark2}, we have energy density measures $\mu_{k}$ on $U = B_{\delta} \subset \mathbb{C}$ satisfying $\mu_{k} \rightarrow \mu_{\infty} + m\delta_{p}$ as measures and $m \geq 2\bar{\varepsilon}$, where $\mu_{\infty}$ is the energy density measure of $f_0$ on $U$.
Choose $r_k$ as in Lemma \ref{out measure1} for $q=p$ which is the origin in $U$.
Again we abuse the notation $r_k$ to refer point $(r_{k},t_{k}/r_{k},\tilde{b}_{k}) \in C_{k}$ in nodal chart, and mark them.

Here we need to check $t_{k}/r_{k} \rightarrow 0$.
Suppose that there is $c>0$ such that $\lvert t_{k}/r_{k} \rvert \geq c$ for all $k$.
Since $\mu_{k} = 0$ on $B_{t_{k}/\delta}$ and $B_{c r_{k}/\delta} \subset B_{t_{k}/\delta}$, $\mu_{k} = 0$ on $B_{c r_{k}/\delta}$ for all $k$.
Recall that $R_{k}(x) =  CR_{p,r_{k}}(x)= x/r_{k}$.
Choose $\delta$ small enough such that $R_{k}^{-1}(D) = B_{r_{k}} \subset B_{c r_{k}/\delta}$ for all $k$.
Therefore $\mu_{k} = 0$ on $R_{k}^{-1}(D)$.
Now Equation \eqref{mark2-eq} can be rewritten as $\int_{B_{k} \setminus R_{k}^{-1}(D)} d\mu_{k} = \mu_{k}(B_{k}) = \bar{\varepsilon}$, which contradicts $\mu_{k}(B_{k}) \rightarrow m \geq 2\bar{\varepsilon}$.
Therefore $t_{k}/r_{k} \rightarrow 0$.

Since $\lim_{k}r_{k} = p$, by Lemma \ref{new family}, we have new family of curves $\mathcal{C}'$ and forgetful map $\Phi : \mathcal{C}' \rightarrow \mathcal{C}$ such that $C'_{k} = (C_{k},r_{k}) \rightarrow C'_{0}$ in $\mathcal{C}'$, which proves (1).

To see (2), we first describe coordinate expression in the family $\mathcal{C}'$ near $E \simeq \mathbb{C}P^{1}$ which agrees with coordinate of $\mathcal{C}$ near the node $p$ under forgetful map $\Phi$.
For details, see \cite{ACG} section 10.8.

Equip nodal chart near $p$ by $(x,y,\tilde{b}) \in U_{1} \times U_{2} \times \tilde{B}$ such that $xy=t$, where $U_{i} \subset \mathbb{C}$ and $p = (0,0,0)$.
The projection $\pi$ is locally given by $\pi(x,y,\tilde{b}) = (t,\tilde{b}) \in B$.
$\Phi(\tau)$ is a section in $\mathcal{C}$ which pass the node $p$.
Using the nodal chart, we can see $\Phi(\tau)$ as a vector-valued function on $B$ given by $\Phi(\tau) = (r,r')$ for some functions $r,r'$ on $B$ such that $r r' = t$.

Note that, for nodal fiber $C_{0}$ in $\mathcal{C}$ with node $p$, $\Phi^{-1}(C_{0})$ is a one parameter subfamily of the family $\mathcal{C}'$, consisting of $(C_{0},r)$ and its limit case where $r=p$, which looks like $C_{0} \cup \mathbb{C}P^{1}$.
Choose a homogeneous coordinate $[\lambda:\mu] \in \mathbb{C}P^{1}$.
This gives local coordinate of $\mathcal{C}'$ near $E$ given by
\begin{equation}\label{new chart2}
((x,y,r,r'),\tilde{b},[\lambda:\mu]) \in (U_{1} \times U_{2} \times U_{1} \times U_{2}) \times \tilde{B} \times \mathbb{C}P^{1} \rightarrow (r,r',\tilde{b}) \in U_{1} \times U_{2} \times \tilde{B}
\end{equation}
with equations
\begin{equation} \label{eq in C'-2}
\lambda r = \mu x \quad \textrm{ and } \quad \lambda y = \mu r' 
\end{equation}
and $\tau$ in $\mathcal{C}'$ can be written by equation
\begin{equation}
\lambda = \mu.
\end{equation}
So $\tau$ has coordinate $[1:1]$ in $E$.

Now $E$ has two nodes, $p_{1} = [1:0]$ and $p_{2} = [0:1]$.
Near $p_{1}$, choose chart of $E$ by $[1:z'] \mapsto z'$ with $z' = \mu/\lambda$.
Then the first equation in \eqref{eq in C'-2} can be written by
\begin{equation}
x z' = r
\end{equation}
which is nodal chart near $p_{1} = [1:0]$.

On the other hand, near $p_{2}$, choose chart of $E$ by $[z:1] \mapsto z$ with $z = \lambda/\mu$.
Then the second equation in \eqref{eq in C'-2} can be written by
\begin{equation}
y z = r'
\end{equation}
which is again nodal chart near $p_{2} = [0:1]$.

$(x,y,r,r',\tilde{b},[\lambda:\mu])$ maps to $(x,y,\tilde{b})$ under forgetful map $\Phi$ and projected to $[\lambda:\mu] \in E$ under local trivialization.
Consider the chart $[z:1] \mapsto z$ away from both $p_{1},p_{2}$.
Note that $\Phi(C'_{k}) = C_{k}$ and $f'_{k} = f_{k} \circ \Phi : C'_{k} \rightarrow X$ is also a sequence of harmonic maps and
\begin{equation*}
(\pi_{1})^{*}f_{k}(x) = f_{k}(x,y,\tilde{b}_{k}) = f'_{k}(x,y,r_{k},t_{k}/r_{k},\tilde{b}_{k},[\lambda:\mu]) = \tilde{f}'_{k}(z)
\end{equation*}
with $z = \lambda/\mu = x/r_{k} = R_{k}(x)$, so we have $\nu_{k} = (R_{k})_{*}(\pi_{1})_{*}(e(f_{k})) = e(\tilde{f}'_{k})$.
Note that $\nu_{k}$ can extend to the whole $E$ by Lemma \ref{conv to C}.
Since the choice of $r_{k}$ come from Lemma \ref{out measure1} and cross ratio is conformally invariant, Equation \eqref{mark2-eq} follows.
This proves (2).

Now consider (3).
By applying Lemma \ref{loc conv} again, there is a subsequence and a finite set of bubble points $\{q_{1}, \ldots, q_{l}\} \subset E \setminus \{p_{1}, p_{2}\}$ such that after passing to a subsequence, $\nu_{k} \rightarrow e(f'_{0}) + \sum_{j}m_{j}\delta_{q_{j}}$ on $E \setminus \{p_{1},p_{2}\}$ with $m_{j} \geq \varepsilon'_{0}$.
Here $f'_{0} : C'_{0} \rightarrow X$ is a limit of $f'_{k}$.
By Equation \eqref{mark2-eq}, $q_{j} \in D$ and $q_{j} \neq p_{2}$.
Denote $m_{0}$ and $m_{\infty}$ the amount of energy concentration at $p_{2}$ and at $p_{1}$ respectively.
Then we have
\begin{equation}
e(f'_{0})(E) + \sum_{j}m_{j} + m_{0} + m_{\infty} = m.
\end{equation}
Since $p$ is regular node, $p_{1}$ and $p_{2}$ are also regular by Lemma \ref{new node regular}.
Hence $m_{0},m_{\infty}$ are either zero or at least $\varepsilon''_{0}$ by Lemma \ref{gap neck}.

For any compact set $K \subset \subset E \setminus \{p_{1},p_{2}\}$, define $B' = \{(x,y,\tilde{b}_{k}) \in B(p,\delta) \cap C_{k} : xy=t_{k}, x \in B_{k}\}$ and $K'  = \{(x,y,r_{k},t_{k}/r_{k},\tilde{b}_{k},[\lambda:\mu]) \in C'_{k} : [\lambda:\mu] \in K\}$.
Then
\begin{equation*}
m_{0} + m_{\infty} \leq m - e(f'_{0})(K) - \sum_{j}m_{j} \leq \lim_{k \rightarrow \infty} \left(\mu_{k}(B_{k}) - \nu_{k}(K) \right) = \lim_{k \rightarrow \infty} E(f'_{k},\Phi^{-1}(B') \setminus K').
\end{equation*}

We first show that, for any $\delta' < \delta$,
\begin{align*}
K_{1} &:= \{q \in \Phi^{-1}(B') \setminus K' : \lvert \mu/\lambda \rvert < \delta'\} \subset B(p_{1},\delta') \cap C'_{k},\\
K_{2} &:= \{q \in \Phi^{-1}(B') \setminus K' : \lvert \lambda/\mu \rvert < \delta'\} \subset  B(p_{2},\delta) \cap C'_{k}
\end{align*}
for all $k$ sufficiently large.
Let $u = (x,y,r_{k},t_{k}/r_{k},\tilde{b}_{k},[\lambda:\mu]) \in K_{1}$.
Note that because $x,r_{k} \in B_{k}$, $\lvert x \rvert, \lvert r_{k} \rvert, \lvert \tilde{b}_{k} \rvert \leq \delta'$ for all $k$ sufficiently large.
Moreover, $\lvert \mu/\lambda \rvert = \lvert z' \rvert <\delta'$ for all $k$ sufficiently large.
Therefore $u \in  B(p_{1},\delta') \cap C'_{k}$ as desired.

On the other hand, let $u = (x,y,r_{k},t_{k}/r_{k},\tilde{b}_{k},[\lambda:\mu]) \in K_{2}$.
As above, $\lvert t_{k}/r_{k} \rvert, \lvert \tilde{b}_{k} \rvert \leq \delta'$ for all $k$ sufficiently large.
We also have $\lvert t_{k}/x \rvert = \lvert y \rvert \leq \delta$.
Moreover, $\lvert \lambda/\mu \rvert = \lvert z \rvert < \delta'$ for all $k$ sufficiently large.
Therefore $u \in B(p_{2},\delta) \cap C'_{k}$ as desired.

Next, we will show that for any $\varepsilon>0$, there is $\delta'$ such that $E(f'_{k}, B(p_{1},\delta') \cap C'_{k}) \leq \varepsilon$ for all $k$ sufficiently large.

Fix $\varepsilon>0$ and assume $\delta' < 1$.
By Equation \eqref{mark2-eq}, $E(f'_{k},K_{1}) \leq \bar{\varepsilon} \leq \varepsilon''_{0}/2$.
Hence, for $\delta'$ small enough, we have $E(f'_{k}, B(p_{1},\delta') \cap C'_{k}) < \varepsilon''_{0}$.
Therefore, by Lemma \ref{gap neck}, there is $\delta'>0$ such that $\lim_{k \rightarrow \infty}E(f'_{k}, B(p_{1},\delta') \cap C'_{k}) < \varepsilon$.

This shows $m_{\infty} \leq \lim_{k \rightarrow \infty}E(f'_{k},K_{1}) \leq \lim_{k \rightarrow \infty}E(f'_{k}, B(p_{1},\delta') \cap C'_{k}) < \varepsilon$ for any $\varepsilon>0$.
So $m_{\infty} = 0$ and this proves the lemma.
\end{proof}

%%%%%%%%%%%%%%%%%%%%%%%%%%%%%%%%%%%%%%%%%%%%%%%%%%%%%%%%%%%%
%%%%%%%%%%%%%%%%%  Bibliography  %%%%%%%%%%%%%%%%%%%%%%%%%%%%%%
%%%%%%%%%%%%%%%%%%%%%%%%%%%%%%%%%%%%%%%%%%%%%%%%%%%%%%%%%%%%%%

\bibliographystyle{amsplain}

\end{document}